\newcommand{\xMapsto}[2][]{\ext@arrow 0599{\Mapstofill@}{#1}{#2}}
\def\Mapstofill@{\arrowfill@{\Mapstochar\Relbar}\Relbar\Rightarrow}
\newcommand\abs[1]{\left|#1\right|} 
\newcommand\paren[1]{\left(#1\right)}
\newcommand\bra[1]{\left\{#1\right\}}
\newcommand{\norm}[1]{\left\lVert#1\right\rVert}
\newcommand{\brak}[1]{\left[#1\right]}
\definecolor{winered}{rgb}{0.5,0,0}
\newtheorem{theorem}{Theorem}
\newtheorem{lemma}[theorem]{Lemma}
\newtheorem{corollary}[theorem]{Corollary}
\newtheorem{problem}{Problem}
\newtheorem{theorem-definition}[theorem]{Theorem-Definition}
\theoremstyle{definition}
\newtheorem{remark}[theorem]{Remark}
\newtheorem*{theorem*}{Theorem}
\numberwithin{equation}{section} \numberwithin{figure}{section}
\numberwithin{equation}{section}
\newcommand{\E}{\E_{\infty}}
\newcolumntype{L}{>{\RaggedRight}X} 
\title{Direct Imaging Methods for Inverse Acoustic Obstacle Scattering}
\author{General Ozochiawaeze}
\date{}
\begin{document}
\maketitle
\begin{abstract}
 Direct imaging methods recover the presence, position, and shape of the unknown obstacles in time-harmonic inverse scattering without \textit{a priori} knowledge of either the physical properties or the number of disconnected components of the scatterer, i.e., on the boundary condition. However, most of these methods require multi-static data and only obtain partial information about the obstacle. These qualitative methods are based on constructing indicator functions defined on the domain of interest, which help determine whether a spatial point or point source lies inside or outside the scatterer. This paper explains the main themes of each of these methods, with emphasis on highlighting the advantages and limitations of each scheme. Additionally, we will classify each method and describe how some of these methods are closely related to each other.
\end{abstract}
\section*{Introduction and Preliminaries}
In time-harmonic inverse obstacle scattering, the primary goal is to extract information about unknown objects from the scattered wave-fields far away from the target. Inverse obstacle scattering has applications in non-destructive testing, geological exploration, radar, medical, and seismic imaging, and deep-sea exploration. In order to fully recover an obstacle's information, one has to solve a fully nonlinear inverse problem, which is often solved by an iterative or decomposition method. However, iterative and decomposition methods often carry high computational cost and rely on some \textit{a priori} information for obtaining initial approximations. In many practical situations, it suffices to provide basic information, such as how many scatterers are present, their location, shape, and size. For these cases, \textit{direct imaging methods}, also known as \textit{qualitative methods}, are preferred. More precisely, these direct imaging methods are based on choosing an appropiate indicator function $g$ defined on the domain of interest $D$ such that its value $g(z)$ decides whether that point $z$ lies inside or outside the scatterer $D$. Since each spatial point in the domain is sampled, direct imaging methods are often referred to as \textit{sampling methods}. The definition of an indicator function usually exploits only the linearly mapped portion of the forward problem, requiring less computational cost. Furthermore, the implementation of sampling methods do not require the boundary condition to be known in advance. However, common drawbacks include needing multi-static data, i.e., knowledge of the far field pattern for a large number of incident fields. Moreover, only partial qualitative information about the scatterer is obtained.\\

This paper gives a comparison of the most well-known qualitative inversion schemes. We will identify the main themes underpinning these shape reconstruction schemes and discuss the mathematical aspects and justification of these methods. Additionally, we will emphasize the advantages and disadvantages of each inversion scheme.\\

In this paper, we will be chiefly concerned with the \textit{acoustic sound soft obstacle scattering problem}. The propagation of time-harmonic acoustic fields in on a homogeneous obstacle is governed by the Helmholtz equation:
\begin{align}
    \Delta u+k^2u&= 0.
\end{align}
We will assume $k>0$, i.e., the scatterer is non-absorbing.\\

Let us now consider a bounded obstacle $D\subset \mathbb{R}^2$ of class $C^2$ such that the exterior of this obstacle $\mathbb{R}^2\setminus\overline{D}$ is connected. Given an incident field $u^{i}$, which is assumed to be a solution of (0.1) in $\mathbb{R}^2$, the presence of $D$ will yield a scattered field $u^{s}$. The sound soft problem consists of solving (0.1) with a Dirichlet boundary condition for the total field on $\partial D$:
\begin{align}
    u^{s}+u^{i}&=0 \quad \text{on } \partial D.
\end{align}
To complete the formulation of the problem, we include the Sommerfield radiation condition imposed on the scattered field $u^{s}$ specifying that there are no incoming waves from infinity:
\begin{align}
    \lim_{|x|\to \infty}\sqrt{|x|}\left(\frac{\partial u^{s}}{\partial |x|}-iku^{s}\right)&=0, \quad x\in \mathbb{R}^2\setminus\overline{D},
\end{align}
uniformly for all directions $\Hat{x}=\frac{x}{|x|}\in \Omega$, where $\Omega=\{x\in\mathbb{R}^2\,:\, |x|=1\}$ is the unit circle. This condition ensures uniqueness of the solution for the exterior problem. The problem posed by equations (0.1)-(0.3) is known as the \textit{direct obstacle scattering problem}. The well-posedness of the problem is well-understood \cite{arens2003linear}. \\

The \textit{inverse obstacle reconstruction problem} refers to determining the shape of the obstacle $D$ from knowledge of $u^{s}$ for incident fields $u^{i}$. To be more precise, we recall that the scattered field $u^{s}$ has a particular asymptotic behavior as $|x|\to \infty$, namely
\begin{align}
    u^{s}&=\frac{\exp{ikr}}{\sqrt{r}}u_{\infty}(\Hat{x},d)+\mathcal{O}\left(r^{-3/2}\right),
\end{align}
as $r\to \infty$ where $r=|x|$, and $\Hat{x}=\frac{x}{r}$, $k$ is fixed and $u_{\infty}$ is the far field pattern of the scattered field. The far field pattern represents the amplitude of the scattered field when the scattered wave is far away from the scatterer. The far field pattern can be expressed as
\begin{align}
    u_{\infty}(\Hat{x},d)&=\frac{\exp{i\frac{\pi}{4}}}{\sqrt{8\pi k}}\int_{\partial D}\left(u^{s}\frac{\partial}{\partial \nu}\exp{-ik\Hat{x}\cdot y}-\frac{\partial u^{s}}{\partial \nu}\exp{-ik\Hat{x}\cdot y}\right)\,ds(y).
\end{align}
We will be particularly concerned with scattered fields and far field patterns arising from incident plane waves, i.e., incident fields of the form
\begin{align*}
    u^{i}(x,d)&\coloneqq \exp(ikd\cdot x), \quad d\in \Omega.
\end{align*}
Sampling methods are methods used to solve the following inverse obstacle reconstruction problem.
\begin{problem}
    Given the knowledge of $u_{\infty}(\cdot,\cdot)$ on $\Omega\times \Omega$, determine the shape of the obstacle $D$.
\end{problem}
It is well-known that problem 1 is uniquely solvable for the sound-soft obstacle \cite{colton1998inverse}. Fundamental to the different approaches we discuss to solving problem 1 is the linear integral operator $F: L^2(\Omega)\to L^2(\Omega)$, given by
\begin{align}
    F g(\Hat{x})&\coloneqq \int_{\Omega}u_{\infty}(\Hat{x},d)g(d)\,ds(d), \quad \Hat{x}\in\Omega,
\end{align}
called the far field operator. The far field operator $F$ can also be thought of as the measurement operator of the inverse problem. The first direct imaging method we will cover is the \textit{linear sampling method}, which attempts to solve the far field equation for kernel $g$ with a certain right-hand side for each point in $x\in\mathbb{R}^2$. The numerical observation is that the approximate solution of this equation will have a large norm outside and close to $\partial D$. Hence, by plotting the norm of the solution it is possible to obtain an image of the obstacle.
\section{Linear Sampling Method}
The first direct imaging methods we discuss are based on testing the range of an operator to obtain shape reconstructions. Roughly speaking, the linear sampling method (LSM) is an algorithm that will determine the shape of the obstacle $D$ by finding approximate solutions to the far-field equation
\begin{align*}
    \int_0^{2\pi}u_{\infty}(\theta,\varphi)g(\varphi)\,d\varphi&=\gamma\exp{(-ikr_z\cos{(\theta-\theta_z)})},
\end{align*}
where $(r_z,\theta_z)$ are the polar coordinates of a point $z\in \mathbb{R}^2$ and
\begin{align*}
    \gamma&=\frac{e^{i\pi/4}}{\sqrt{8\pi k}},
\end{align*}
or, in simpler notation,
\begin{align*}
    (Fg_z)(\Hat{x})=\Phi_{\infty}(\Hat{x},z),
\end{align*}
where $\Hat{x}=(\cos{\theta}, \sin{\theta})$ and
\begin{align*}
    \Phi_{\infty}(\Hat{x},z)&=\gamma e^{-ik\Hat{x}\cdot z}
\end{align*}
is the far field pattern of the fundamental solution to the Helmholtz equation. The linear sampling method is based on numerically determining the function $g_z$, which will serve as an indicator function that will help determine the points $z$ that lie in the scatterer $D$. However, the numerical procedure we will describe is rather ad-hoc since in general the far field equation has no actual solution even in the case of ``noise-free" data $u_{\infty}$.\\

We begin our discussion of the linear sampling method by considering the fact that the direct scattering problem (0.1)-(0.3) is a special case of the exterior boundary value problem.
\begin{problem}
    Given $f\in H^{\frac{1}{2}}(\partial D)$, compute the far field pattern $v_{\infty}$ of the weak solution $v$ of (0.1) satisfying (0.3), and $v|_{\partial D}=f$.
\end{problem}
Hence, we are considering weak solutions to the exterior boundary value problem. We can then define the boundary operator $B:H^{\frac{1}{2}}(\partial D)\to L^2([0,2\pi])$, which is defined as the linear operator mapping $f$ into the far field pattern $u_{\infty}$ corresponding to problem 2, i.e., $Bf=v_{\infty}$. $B$ is also referred to as the solution operator of problem 2. The operators $F$ and $B$ are closely related: consider the incident field
\begin{align*}
    v_g(x)&\coloneqq \int_{\Omega}\exp({ikd\cdot x})g(d)\,ds(d), \quad x\in \mathbb{R}^2.
\end{align*}
By linearity of the direct obstacle scattering problem, $Fg$ is the far field pattern of the scattered field arising from the incident field $v_g$, i.e., $Fg=B(-v_g|_{\partial D})$. Introducing the Herglotz operator $H: L^2([0,2\pi])\to H^{1/2}(\partial D)$ by
\begin{align*}
    Hg(x)&\coloneqq v_g|_{\partial D},
\end{align*}
we can rewrite the previous relation as
\begin{align}
    F&=-BH.
\end{align}
That is, we obtain the following commutative diagram
\[
\begin{tikzcd}
 & H^{\frac{1}{2}}(\partial D) \arrow{dr}{B} \\
L^2([0,2\pi]) \arrow{ur}{H} \arrow{rr}{F} && L^2([0,2\pi])
\end{tikzcd}
\]
The function $v_g$ is the Herglotz wavefunction with density $g$. The following theorem sums up the important properties of the operators $F$, $B$, and $H$.
\begin{theorem}
    The operator $H$ has dense range and the operator $B$ is injective. The operators $F$ and $B$ are both compact with $F$ being normal for the sound-soft problem. Suppose the Herglotz operator $H$ is additionally injective, i.e., that there is no Herglotz wavefunction $v_g$ that is a Dirichlet eigenfunction of the negative Laplacian for the eigenvalue $k^2$ in $D$. Then 
    \begin{itemize}
        \item $F$ is injective.
        \item Both $F$ and $B$ have dense range.
    \end{itemize}
\end{theorem}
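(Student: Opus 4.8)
The entire statement is organized around the factorization $F=-BH$ recorded in (1.1), Rellich's lemma together with the jump relations for layer potentials, and — for the normality of $F$ — conservation of energy. I would dispatch the three unconditional assertions first and then read off the two consequences of the injectivity of $H$. Injectivity of $B$ is immediate: if $Bf=v_{\infty}=0$, then by Rellich's lemma and unique continuation the radiating solution $v$ vanishes throughout the unbounded component of $\R^{2}\setminus\overline{D}$, so $f=v|_{\partial D}=0$. For compactness I would argue from the representation (0.5): the far field pattern $Bf$ is produced by integrating the Cauchy data $(v|_{\partial D},\partial_{\nu}v|_{\partial D})\in H^{1/2}(\partial D)\times H^{-1/2}(\partial D)$ against the kernels $e^{-ik\hat x\cdot y}$ and $\partial_{\nu(y)}e^{-ik\hat x\cdot y}$, which are real-analytic on $\Omega\times\partial D$; the resulting integral operator is infinitely smoothing, hence bounded into $H^{1}(\Omega)$, and the embedding $H^{1}(\Omega)\hookrightarrow L^{2}(\Omega)$ is compact, while $f\mapsto(v|_{\partial D},\partial_{\nu}v|_{\partial D})$ is bounded by well-posedness of the exterior problem; so $B$ is compact. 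Then $F=-BH$ is compact because $H$ is bounded (alternatively, $F$ is Hilbert--Schmidt, its kernel $u_{\infty}(\hat x,d)$ being continuous on $\Omega\times\Omega$).

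For the dense range of $H$ I would prove the equivalent fact that $H^{*}\colon H^{-1/2}(\partial D)\to L^{2}(\Omega)$ is injective. A Fubini computation identifies $H^{*}\varphi$, up to a fixed constant, with $d\mapsto\int_{\partial D}\varphi(y)e^{ikd\cdot y}\,ds(y)$, which is (up to that constant) the value $u_{\infty}(-d)$ of the far field pattern of the single-layer potential $u(x)=\int_{\partial D}\Phi(x,y)\varphi(y)\,ds(y)$, where $\Phi$ is the outgoing fundamental solution of the Helmholtz equation. If $H^{*}\varphi=0$ then $u$ has vanishing far field, so $u\equiv0$ in $\R^{2}\setminus\overline{D}$ by Rellich's lemma; continuity of the single-layer potential across $\partial D$ gives zero interior Dirichlet data, whence $u|_{D}$ solves the interior Helmholtz problem with homogeneous Dirichlet condition and therefore vanishes in $D$ too, and the jump relation for the normal derivative of the single-layer potential then forces $\varphi=0$. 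The step $u|_{D}\equiv0$ uses that $k^{2}$ is not an interior Dirichlet eigenvalue of $-\Delta$ on $D$: without that proviso the normal derivative of an eigenfunction annihilates the range of $H$ (by Green's second identity), so the dense-range assertion should be read with this standing assumption in force.

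It remains to prove that $F$ is normal and to deduce the last two bullets. For the sound-soft obstacle I would combine two ingredients. The reciprocity relation $u_{\infty}(\hat x,d)=u_{\infty}(-d,-\hat x)$ gives $F^{*}=JFJ$, where $J$ is the antilinear involution $(Jh)(\hat x)=\overline{h(-\hat x)}$. The conservation-of-energy (optical-theorem) identity for a non-absorbing obstacle, $F-F^{*}=\tfrac{ik}{2\pi}F^{*}F$, is obtained by applying Green's identities to the total field $u^{i}+u^{s}$ and invoking the Dirichlet condition on $\partial D$ and the Sommerfeld radiation condition. Conjugating the second identity by $J$ and using $F^{*}=JFJ$ yields the companion identity $F-F^{*}=\tfrac{ik}{2\pi}FF^{*}$, and subtracting the two gives $F^{*}F=FF^{*}$; equivalently, the scattering operator $I+\tfrac{ik}{2\pi}F$ is unitary. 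Granting this, $F=-BH$ is injective as a composition of injective operators ($B$ injective by the first part, $H$ injective by hypothesis); since $F$ is normal and injective, $\ker F^{*}=\ker F=\{0\}$, so the range of $F$ is dense in $L^{2}(\Omega)$; and since $F=-BH$, the range of $F$ is contained in the range of $B$, so the range of $B$ is dense as well.

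The only step demanding real analytic input is the normality of $F$. Reciprocity by itself merely re-expresses $F^{*}$ and does not produce $F^{*}F=FF^{*}$, so one genuinely has to establish the energy balance for the sound-soft obstacle; I expect that computation — together with the attention needed to the interior Dirichlet spectrum in the dense-range statement for $H$ — to be the technical core, with everything else reducing to bookkeeping around $F=-BH$, Rellich's lemma, and the jump relations.
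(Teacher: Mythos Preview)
The paper does not supply its own proof of this theorem; it simply refers the reader to \cite{cakoni2014qualitative}. Your outline is correct and is essentially the standard argument found there and in Colton--Kress: injectivity of $B$ via Rellich, compactness from the smoothing far-field kernel, dense range of $H$ from injectivity of $H^{*}$ via single-layer jump relations, normality of $F$ from reciprocity plus unitarity of the scattering operator $I+i\gamma F$ (exactly the form in which the paper itself later invokes it in Theorem~7), and the two bullets read off from $F=-BH$ together with $\ker F^{*}=\ker F$ for normal operators. Your caveat that the dense-range claim for $H$ tacitly needs $k^{2}$ not to be an interior Dirichlet eigenvalue is well observed and matches the standing assumption used throughout the rest of the paper (Theorem~2, Corollaries~6 and 8--10). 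The constant in your energy identity is the three-dimensional one rather than the two-dimensional one, but this is immaterial to the conjugation argument yielding $F^{*}F=FF^{*}$.
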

For a proof of this theorem, see \cite{cakoni2014qualitative}. Another important fact about the boundary operator $B$ is that this operator characterizes the unknown obstacle completely. Namely, the far field pattern of a point source located at $z\in \mathbb{R}^2$ is obtainable from the asymptotic behavior of the fundamental solution as $|x|$ tends to infinity; this is denoted $\Phi_{\infty}$ as previously discussed. We can then show, using Rellich's lemma, that 
\begin{align}
    z\in D\iff \Phi_{\infty}(\cdot,z)\in B(H^{\frac{1}{2}}(\partial D)),
\end{align}
i.e., the range of the operator $B$ (see \cite{cakoni2014qualitative} and \cite{colton1998inverse}). 
\begin{proof}[Proof of (1.2)]
     \ \\ $\Rrightarrow$ First let $z\in D$. Then define
     $v(x)\coloneqq \Phi(x,z)=\frac{i}{4}H_0^{(1)}(k|x-z|), x\notin D$ and $f(x)\coloneqq v|_{\partial D}$. Then we know $f\in H^{1/2}(\partial D)$ by the trace theorem and the far field pattern of $v$ is given by $v_{\infty}=\gamma e^{-ik\Hat{x}\cdot z}, \Hat{x}\in [0,2\pi]$, which coincides with the far field pattern of the fundamental solution to the Helmholtz equation, namely $v_{\infty}(\Hat{x})=\Phi_{\infty}(\cdot, z)$. Hence, 
    \begin{align*}
        Bf&=v_{\infty}=\Phi_{\infty}(\cdot, z)\implies \Phi_{\infty}\in B(H^{1/2}(\partial D)).
    \end{align*}
    $\Lleftarrow$ Suppose $z\notin D$. We proceed by contradiction and suppose additionally that $\Phi_{\infty}(\cdot,z)\in B(H^{1/2}(\partial D))$. We divide into two cases
    \begin{enumerate}
        \item Suppose $z\in \mathbb{R}^2\setminus\overline{D}$. Then since $\Phi_{\infty}(\cdot,z)\in B(H^{1/2}(\partial D))$, there is $f\in H^{1/2}(\partial D)$ such that $Bf=\Phi_{\infty}(\cdot,z)$. Let $v$ be a solution to the exterior Dirichlet boundary value problem, i.e., $v$ satisfies
        \begin{align*}
            \Delta v+k^2v&=0 \quad \text{outside }D\\
            v|_{\partial D}&=f\\
            \lim_{|x|\to \infty}\sqrt{|x|}\paren{\frac{\partial v}{\partial |x|}-ikv}&=0.
        \end{align*}
        Then let $v_{\infty}=Bf$ denote the far field pattern corresponding to the scattered field solution $v$. Then by Rellich's lemma and the analyticity of the solution $v$ with trace $v|_{\partial D}=f$, $v$ must coincide with the fundamental solution $\Phi(\cdot,z)$ in $\mathbb{R}^2\setminus\paren{\overline{D}\cup \{z\}}$ since $\Phi_{\infty}$ coincides with $v_{\infty}$. But if $z\in \mathbb{R}^2\setminus\overline{D}$, then we obtain a contradiction to the fact that $v$ is analytic in $\mathbb{R}^2\setminus\overline{D}$ and $\Phi(\cdot,z)$ has a singularity at $x=z$, despite $v=\Phi(x,z)$. Hence $z\notin \mathbb{R}^2\setminus\overline{D}$.
        \item Suppose instead $z\in \partial D$. Now $\Phi(x,z)=f(x)$ for $x\in\partial D,\, x\neq z$. Since $f\in H^{1/2}(\partial D)$. $\Phi(x,z)|_{\partial D}\in H^{1/2}(\partial D)$. But this is impossible as $\nabla \Phi(x,z)=\mathcal{O}\paren{\frac{1}{|x-z|^2}}$ as $x\to z$ (this is an asymptotic result on the differentiability of the Hankel function of the first kind of order zero). So $\nabla\Phi(\cdot,z)$ is neither 
 in $L_{loc}^2(\mathbb{R}^2\setminus\overline{D})$ nor in $L^2(D)$, so we obtain a contradiction. Hence, $z\notin \partial D$ either, meaning $z\in D$.
    \end{enumerate}
\end{proof}
LSM then combines (1.1) and (1.2) to approximately solve the far-field equation. The norm of the approximate solution $||g_z||_{L^2([0,2\pi])}$ should then blow up as $z\in D$ approaches the boundary of the scatterer $\partial D$ and it should continue to be large for $z\notin \overline{D}$. The following theorem provides a mathematical basis of the linear sampling method.
\begin{theorem}
    Let $u_{\infty}$ be the far field pattern associated to the scattering problem (0.1)-(0.3) with the associated far field operator $F$. Assume that $k^2$ is not a Dirichlet eigenvalue of the negative Laplacian in $D$. Then the following holds
    \begin{enumerate}
        \item If $z\in D$ then for every $\epsilon>0$ there is a solution $g_z^{\alpha}\in L^2([0,2\pi])$ of the inequality 
        \begin{align*}
            ||Fg_z^{\alpha}-\Phi_{\infty}(\cdot,z)|| &\,<\,\epsilon,
        \end{align*}
        such that
        \begin{align*}
            \lim_{z\to \partial D}||g_z^{\alpha}||_{L^2([0,2\pi])}&=\infty \qquad\text{and } \lim_{z\to\partial D}||v_{g_z^{\alpha}}||_{H^1(D)}=\infty.
        \end{align*}
        \item If $z\notin \overline{D}$ then for every $\epsilon>0$ and $\delta>0$ there exists a solution $g_z^{\alpha,\delta}\in L^2([0,2\pi])$ of the inequality
              \begin{align*}
            ||Fg_z^{\alpha,\delta}-\Phi_{\infty}(\cdot,z)|| &\,<\,\epsilon+\delta,
        \end{align*}
        such that 
        \begin{align*}
            \lim_{\delta\to 0}||g_z^{\alpha,\delta}||_{L^2([0,2\pi])}&=\infty \qquad\text{and } \lim_{\delta\to 0}||v_{g_z^{\alpha,\delta}}||_{H^1(D)}=\infty.
        \end{align*}
    \end{enumerate}
\end{theorem}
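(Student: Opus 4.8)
The plan is to derive both statements from the operator factorization $F=-BH$ of Theorem 1.3 together with the range characterization (1.2); the only substantial analytic input will be one fact about the fundamental solution, which I isolate at the outset. Write $f_z:=\Phi(\cdot,z)|_{\partial D}$, so that for $z\in D$ the proof of (1.2) already gives $Bf_z=\Phi_\infty(\cdot,z)$ (since $\Phi(\cdot,z)$ solves the exterior problem with boundary data $f_z$ and has far field pattern $\Phi_\infty(\cdot,z)$). I will also use that the Herglotz operator $g\mapsto v_g$ maps $L^2([0,2\pi])$ boundedly into $H^1(D)$ — immediate from $v_g(x)=\int_\Omega e^{ikd\cdot x}g(d)\,ds(d)$ (whose value and gradient are dominated pointwise on $\overline{D}$ by a constant times $\|g\|_{L^2}$ via Cauchy--Schwarz) — say $\|v_g\|_{H^1(D)}\le C_H\|g\|_{L^2([0,2\pi])}$. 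The isolated fact is: for $z_0\in\partial D$ the trace $\Phi(\cdot,z_0)|_{\partial D}$ lies in $L^2(\partial D)$ but not in $H^{1/2}(\partial D)$, and quantitatively $\|\Phi(\cdot,z)|_{\partial D}\|_{H^{1/2}(\partial D)}\to\infty$ as $D\ni z\to z_0$. This is because $\Phi(x,z)=\tfrac{i}{4}H_0^{(1)}(k|x-z|)=-\tfrac{1}{2\pi}\log|x-z|+O(1)$ as $x\to z$, and the logarithm of the distance to a point of a $C^2$ curve just fails to belong to $H^{1/2}$ of that curve (on the Fourier side $\widehat{\log}(\xi)\sim|\xi|^{-1}$, so $\int|\xi|\,|\widehat{\log}(\xi)|^2\,d\xi$ diverges; equivalently the Gagliardo seminorm diverges); pushing the singularity towards $\partial D$ then forces the $H^{1/2}$ norms of the smooth traces $\Phi(\cdot,z)|_{\partial D}$ to blow up. This is the same mechanism already used in Case (2) of the proof of (1.2).

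For part (1), fix $z\in D$ and $\epsilon>0$. Since $H$ has dense range in $H^{1/2}(\partial D)$ (Theorem 1.3), I would pick $g_z^\alpha\in L^2([0,2\pi])$ with $\|Hg_z^\alpha+f_z\|_{H^{1/2}(\partial D)}<\epsilon/(1+\|B\|)$. Then $F=-BH$ and $Bf_z=\Phi_\infty(\cdot,z)$ give
\[
\|Fg_z^\alpha-\Phi_\infty(\cdot,z)\|=\|B(Hg_z^\alpha+f_z)\|\le\|B\|\,\|Hg_z^\alpha+f_z\|_{H^{1/2}(\partial D)}<\epsilon,
\]
so the required inequality holds. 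For the blow-up, since $v_{g_z^\alpha}|_{\partial D}=Hg_z^\alpha$ and the trace $H^1(D)\to H^{1/2}(\partial D)$ has some norm $C_\gamma$,
\[
\|v_{g_z^\alpha}\|_{H^1(D)}\ge C_\gamma^{-1}\|Hg_z^\alpha\|_{H^{1/2}(\partial D)}\ge C_\gamma^{-1}\Bigl(\|f_z\|_{H^{1/2}(\partial D)}-\tfrac{\epsilon}{1+\|B\|}\Bigr),
\]
which tends to $\infty$ as $z\to\partial D$ by the isolated fact; and then $\|g_z^\alpha\|_{L^2([0,2\pi])}\ge C_H^{-1}\|v_{g_z^\alpha}\|_{H^1(D)}\to\infty$ as well. (In fact this argument shows both norms blow up for \emph{every} admissible $g_z^\alpha$, slightly more than is asked.)

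For part (2), fix $z\notin\overline{D}$. By (1.2) we have $\Phi_\infty(\cdot,z)\notin B(H^{1/2}(\partial D))$, and since $\mathrm{Range}(F)=\mathrm{Range}(-BH)\subseteq\mathrm{Range}(B)$ also $\Phi_\infty(\cdot,z)\notin\mathrm{Range}(F)$ — the far-field equation has no exact solution. On the other hand the hypothesis that $k^2$ is not a Dirichlet eigenvalue makes $H$ injective, so by Theorem 1.3 both $B$ and $F$ have dense range; hence $\Phi_\infty(\cdot,z)\in\overline{\mathrm{Range}(F)}$ and for each $\delta>0$ — a fortiori within $\epsilon+\delta$ — there is $g_z^{\alpha,\delta}$ with $\|Fg_z^{\alpha,\delta}-\Phi_\infty(\cdot,z)\|<\delta$. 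It remains to show $\|v_{g_z^{\alpha,\delta}}\|_{H^1(D)}\to\infty$ as $\delta\to0$, after which $\|g_z^{\alpha,\delta}\|_{L^2}\ge C_H^{-1}\|v_{g_z^{\alpha,\delta}}\|_{H^1(D)}\to\infty$ follows. I would argue by contradiction: if, along some $\delta_n\to0$, the norms $\|v_{g_n}\|_{H^1(D)}$ (with $g_n:=g_z^{\alpha,\delta_n}$) stayed bounded, extract a subsequence with $v_{g_n}\rightharpoonup v^*$ weakly in $H^1(D)$; the trace operator is weak-to-weak continuous, so $Hg_n=v_{g_n}|_{\partial D}\rightharpoonup v^*|_{\partial D}$ in $H^{1/2}(\partial D)$, and since $B$ is compact, $Fg_n=-B(Hg_n)\to -B(v^*|_{\partial D})$ in $L^2$. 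But $Fg_n\to\Phi_\infty(\cdot,z)$, so $\Phi_\infty(\cdot,z)=B(-v^*|_{\partial D})\in B(H^{1/2}(\partial D))$, contradicting $z\notin\overline{D}$ via (1.2). Hence $\|v_{g_n}\|_{H^1(D)}\to\infty$.

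The main obstacle is the isolated fact: that $\Phi(\cdot,z_0)|_{\partial D}$ just fails to lie in $H^{1/2}(\partial D)$ for $z_0\in\partial D$, with a rate of blow-up as $D\ni z\to z_0$ that is uniform in $z$. This is where one has to genuinely weigh the logarithmic singularity of the Hankel function against the one-dimensional fractional Sobolev scale on $\partial D$ (and control constants uniformly, using compactness of $\partial D$); everything else is soft functional analysis — dense ranges of $H$ and $F$, injectivity and compactness of $B$, compactness of $F$, the trace theorem, and boundedness of the Herglotz operator into $H^1(D)$ — and the Dirichlet-eigenvalue hypothesis enters only in part (2), to ensure $F$ has dense range so that approximate solutions exist at all.
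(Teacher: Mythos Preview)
Your argument is correct and follows essentially the same route as the paper: in part (1) you use the density of $H$ to approximate $-f_z$ and then the factorization $F=-BH$ together with $Bf_z=\Phi_\infty(\cdot,z)$, and in part (2) you run the same weak-compactness contradiction as the paper. Your version is in fact a bit more complete: for the blow-up in part (1) you make explicit the inequality $\|v_{g_z^\alpha}\|_{H^1(D)}\ge C_\gamma^{-1}(\|f_z\|_{H^{1/2}(\partial D)}-\epsilon/(1+\|B\|))$ via the trace theorem and the divergence of $\|\Phi(\cdot,z)|_{\partial D}\|_{H^{1/2}}$, whereas the paper only gestures at the interior Dirichlet problem; and in part (2) you correctly invoke the \emph{compactness} of $B$ (established in Theorem~1) to pass from weak convergence of $Hg_n$ to strong convergence of $BHg_n$, where the paper writes ``bounded'' (which would not suffice on its own).
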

The main problem with this theorem is that there is no guarantee that a numerical algorithm for solving the inequalities, that is, a regularized method to solve the integral equation of the first kind, will actually obtain these densities $g_z^{\alpha}$ and $g_z^{\alpha,\delta}$, respectively. This is because the right-hand side of the far-field equation is almost never an element of the range of the operator $F$ but rather $B$ \cite{arens2003linear}. Nevertheless, numerically the method has proven effective for a large class of obstacle scattering problems.
\begin{proof}
    Assume $z\in D$. Then we know then $\Phi_{\infty}(\cdot,z)\in B(H^{1/2}(\partial D))$. So there exists $f_z\in H^{1/2}(\partial D)$ such that $Bf_z=-\Phi_{\infty}(\cdot,z)$. Then since the operator $H$ is bounded, injective, and has dense range, we see that for a given $\epsilon>0$ there exists a Herglotz wave function with kernel $g_z^{\alpha}\in L^2([0,2\pi])$ such that
    \begin{align*}
        ||Hg_z^{\alpha}-f_z||_{H^{1/2}(\partial D)}<\frac{\epsilon}{||B||}.
    \end{align*}
    Consequently,
    \begin{align*}
        ||BHg_z^{\alpha}-Bf_z||_{L^2([0,2\pi])}<\epsilon.
    \end{align*}
    Note that $F=-BH$, so we have that
    \begin{align*}
        ||Fg_z^{\alpha}-\Phi_{\infty}||_{L^2([0,2\pi])}<\epsilon.
    \end{align*}
    Next, notice that the Herglotz wave function $v_g^{\alpha}$ with kernel $g_z^{\alpha}$ satisfies the interior Dirichlet problem with $h\coloneqq -Hg_z^{\alpha}$, namely, we have
    \begin{align*}
        \Delta v_g^{\alpha}+k^2v_g^{\alpha}&=0 \quad \text{in }D\\
        v_g^{\alpha}|_{\partial D}&=h.
    \end{align*}
    Hence, we see $v_g^{\alpha}\to v_z$ of the interior Dirichlet problem.\\
    Now assume $z\notin D$ and assume to the contrary that there is a sequence $\{\epsilon_n\}\to 0$ and corresponding $g_n$ satisfying $||Fg_n-\Phi_{\infty}(\cdot,z)||_{L^2([0,2\pi])}<\epsilon_n$ such that the $||v_n||_{H^1(D)}$ remain bounded where $v_n\coloneqq v_{g_n}$ refers to the Herglotz wave function with kernel $g_n$. Then by the trace theorem $||Hg_n||_{H^{1/2}(\partial D)}$ also remain bounded. Without loss of generality, we may assume weak convergence of $Hg_n\to h\in H^{1/2}(\partial D)$ as $n\to\infty$. Since $B$ is a bounded linear operator, we have $BHg_n\to Bh$ in $L^2([0,2\pi])$. But then $BHg_n\to -\Phi_{\infty}(\cdot,z)$ which means we have $Bh=-\Phi_{\infty}(\cdot,z)$, a contradiction since we assumed $z\notin D$. Hence the second statement of the theorem remains true.
\end{proof}
The procedure of LSM can be summarized as follows:
\begin{remark}[Linear Sampling Method]
The linear sampling method provides a scheme for the reconstruction of $D$ by a regularized solution of the integral equation $Fg_z=\Phi_{\infty}(\cdot,z)$ from the scattering data $u_{\infty}(\Hat{x},d)$ for $\Hat{x},d\in \Omega$, where $\Omega$ is the unit circle or sphere in dimensions $2$ and $3$ respectively.
\begin{enumerate}
    \item Select a grid of ``sampling points" in a region known to contain $D$. Namely, choose a sampling grid $\mathcal{G}$, a regularization parameter $\alpha>0$ and a cut-off constant $c_0$.
    \item Use Tikhonov-Morozov regularization to compute an approximation $g_z$ to the far-field equation for each $z$ in the foregoing grid $\mathcal{G}$. (It is possible to use other regularizations, but Tikhonov-Morozov is most commonly used).
    \item Calculate a reconstruction $M$ for $D$ by
    \begin{align*}
        M&\coloneqq \bra{z\in\mathcal{G}\,:\,\norm{g_{z,\alpha}}\leq c_0}.
    \end{align*}
    The choice of $c_0$ here is heuristic but improves when the frequency becomes higher \cite{cakoni2014qualitative}.
\end{enumerate}
\end{remark}
Though we have justification for the case when $z\notin D$, one major issue with the theoretical framework of LSM is that we have no guaranteed way of indicating when $z\in D$. Hence, we have no guarantee that solving the regularized far-field equation will actually obtain the desired densities. This disadvantage of LSM motivates the factorization method as a similar alternative that fixes this issue, assuming more structural assumptions. Nevertheless, LSM has proven to be effective for many different obstacle reconstruction problems.\\

Due to the compactness of the operator $F$, the far-field equation is ill-posed. Hence, a stable solution in the generic sampling point $z$ requires a regularization. Usually, this is done using Tikhonov regularization, thus obtaining the equation
\begin{align}
    (\alpha I+F^{*}F)g_{z,\alpha}&=F^{*}\Phi_{\infty}(\cdot,z),
\end{align}
where $\alpha>0$ is the regularization parameter. For the sound-soft Dirichlet problem under consideration, the far field equation is normal. Since $F$ is additionally compact, by spectral theory there exist eigenvalues $\lambda_n\in \mathbb{C}$ of $F$ and corresponding eigenfunctions $g_n\in L^2([0,2\pi])$, $n\in \mathbb{N}$, such that the set $\{g_n\}$ forms a complete orthonormal system in $L^2([0,2\pi])$. The operator $F$ then obtains an eigenexpansion
\begin{align}
    Fg&=\sum_{n=1}^{\infty}\lambda_n(g,g_n)g_n,\quad g\in L^2([0,2\pi]).
\end{align}
Using the eigensystem of $F$, the unique solution of (1.3) can be written in the form
\begin{align}
    g_{z,\alpha}&=\sum_{n=1}^{\infty}\frac{\overline{\lambda_n}}{\alpha+|\lambda_n|^2}(\Phi_{\infty}(\cdot,z),g_n)g_n,
\end{align}
where evaluation is computationally straightforward as it requires a single evaluation of the singular value decomposition of $F$.
\section{Factorization Method}
The factorization method can be viewed as a refinement of the linear sampling method; it attempts to rectify the theoretical issues LSM has. Both methods try to determine the support of the scatterer by deciding whether a point $z$ in space is inside or outside the scatterer. However, in the factorization method, when $F$ is normal, which is the case for the sound-soft obstacle scattering problem, we study the equation
\begin{align}
    (F^{*}F)^{\frac{1}{4}}g_z&=\Phi_{\infty}(\cdot,z),
\end{align}
replacing the far field operator $F$. One can show that (2.1) has a solution if and only if $z\in D$. This method relies on the factorization of $F$. \\
We first consider one of the main theoretical foundations of the factorization method given by the following optimization theorem.
\begin{theorem}
    Let $X$ and $H$ be Hilbert spaces with inner products $(\cdot,\dot)$, let $X^{*}$ be the dual space of $X$ and assume that $F:H\to H$, $A:X\to H$, and $T:X^{*}\to X$ be bounded linear operators that satisfy
    \begin{align}
        F&=ATA^{*}
    \end{align}
    where $A^{*}:H\to X^{*}$ is the antilinear adjoint of $A$ defined by
    \begin{align}
        \langle \varphi,A^{*}g\rangle&=\paren{A\varphi,g},\quad g\in H,\varphi\in X,
    \end{align}
    in terms of the bilinear duality pairing of $X$ and $X^{*}$. Assume further that $T$ is coercive, i.e., 
    \begin{align}
        \abs{\langle Tf,f\rangle}&\geq c||f||_{X^{*}}^2
    \end{align}
    for all $f\in A^{*}(H)$ for some $c>0$. Then for any $g\in H$ with $g\neq 0$ we have that 
    \begin{align}
        g\in A(X)&\iff \inf\bra{\abs{\paren{F\psi,\psi}}\,:\,\psi\in H,\paren{g,\psi}=1}>0.
    \end{align}
\end{theorem}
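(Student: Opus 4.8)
The plan is to use the factorization $F = ATA^{*}$ to express the quadratic form $(F\psi,\psi)$ entirely in terms of $A^{*}\psi\in X^{*}$, and then to reduce the whole equivalence to a statement about when the \emph{single} vector $g$ lies in $\mathrm{Range}(A)$, controlled by an a priori estimate between $\abs{(g,\psi)}$ and $\norm{A^{*}\psi}_{X^{*}}$. First I would record that, by the defining relation (2.3) of the antilinear adjoint,
\[
(F\psi,\psi)=(A T A^{*}\psi,\psi)=\langle T A^{*}\psi,\,A^{*}\psi\rangle=\langle Tf,f\rangle,\qquad f:=A^{*}\psi\in A^{*}(H).
\]
Combining the coercivity hypothesis (2.4) with the trivial bound $\abs{\langle Tf,f\rangle}\le\norm{T}\,\norm{f}_{X^{*}}^{2}$ then gives the two--sided estimate $c\,\norm{A^{*}\psi}_{X^{*}}^{2}\le\abs{(F\psi,\psi)}\le\norm{T}\,\norm{A^{*}\psi}_{X^{*}}^{2}$ for all $\psi\in H$. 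From this, the infimum in (2.5) is positive if and only if $\inf\paren{\norm{A^{*}\psi}_{X^{*}}:\psi\in H,\ (g,\psi)=1}>0$, and — rescaling an arbitrary $\psi$ with $(g,\psi)\neq0$ — this in turn holds if and only if there is a constant $c'>0$ with $\abs{(g,\psi)}\le c'\,\norm{A^{*}\psi}_{X^{*}}$ for all $\psi\in H$. So the theorem becomes: $g\in A(X)$ iff such an a priori estimate holds.

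For the forward direction ($g\in A(X)\Rightarrow$ estimate) I would argue directly. Write $g=A\varphi$; since $g\neq0$ we have $\varphi\neq0$. For any admissible $\psi$ (i.e. $(g,\psi)=1$), the relation (2.3) gives $1=(g,\psi)=(A\varphi,\psi)=\langle\varphi,A^{*}\psi\rangle$, hence $\norm{A^{*}\psi}_{X^{*}}\ge\norm{\varphi}_{X}^{-1}$ by the Cauchy--Schwarz bound for the duality pairing, and therefore $\abs{(F\psi,\psi)}\ge c\,\norm{\varphi}_{X}^{-2}>0$ uniformly in $\psi$. Thus the infimum in (2.5) is positive.

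The substance of the proof is the converse, which I expect to be the main obstacle: from the estimate $\abs{(g,\psi)}\le c'\norm{A^{*}\psi}_{X^{*}}$ for all $\psi$ I must produce $\varphi\in X$ with $A\varphi=g$. The estimate first forces $g\perp\ker A^{*}$ (take $\psi\in\ker A^{*}$). This makes the map $\Lambda_{0}\colon\mathrm{Range}(A^{*})\to\C$, $\Lambda_{0}(A^{*}\psi):=(g,\psi)$, well defined; it is linear (the conjugate--linearity of $A^{*}$ cancels that of $\psi\mapsto(g,\psi)$) and, by the estimate, bounded by $c'$. Extending $\Lambda_{0}$ by continuity to $\overline{\mathrm{Range}(A^{*})}$ and by zero on its orthogonal complement in the Hilbert space $X^{*}$ yields a bounded linear functional $\Lambda$ on $X^{*}$; by reflexivity of $X$ there is $\varphi\in X=X^{**}$ with $\Lambda=\langle\varphi,\cdot\rangle$. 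Evaluating at $A^{*}\psi$ and using (2.3) once more gives $(A\varphi,\psi)=\langle\varphi,A^{*}\psi\rangle=\Lambda_{0}(A^{*}\psi)=(g,\psi)$ for every $\psi\in H$, whence $A\varphi=g$ and $g\in A(X)$.

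This converse step is essentially Douglas' range--inclusion lemma specialized to a rank--one target, and the delicate points are exactly the ones I would flag: the bookkeeping of linear versus antilinear maps when defining $\Lambda_{0}$, and the well--definedness of $\Lambda_{0}$, which is precisely where $g\perp\ker A^{*}$ (equivalently, the a priori estimate) is used. Everything else — the rewriting of the quadratic form, the two--sided estimate, and the rescaling between the three formulations of ``the infimum is positive'' — is routine once the factorization identity is in place.
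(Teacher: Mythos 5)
Your proposal is correct, and the forward implication ($g\in A(X)\Rightarrow$ the infimum is positive) is exactly the paper's computation: coercivity plus the duality Cauchy--Schwarz bound applied to $1=(g,\psi)=\langle\varphi,A^{*}\psi\rangle$ gives $\abs{\paren{F\psi,\psi}}\geq c\,\norm{\varphi}_X^{-2}$. Where you genuinely diverge is the converse. The paper proves it by contraposition: assuming $g\notin A(X)$, it shows $A^{*}\bigl([\operatorname{span}\{g\}]^{\perp}\bigr)$ is dense in $\overline{A^{*}(H)}$ (via the Riesz isomorphism $J$ and the observation that any limit orthogonal to that set is forced to satisfy $A\varphi\in\operatorname{span}\{g\}$, hence $A\varphi=0$, hence $\varphi=0$), and from this density it builds an explicit admissible sequence $\psi_n=\widetilde{\psi}_n+\norm{g}^{-2}g$ with $\paren{g,\psi_n}=1$ and $A^{*}\psi_n\to0$, so that $\abs{\paren{F\psi_n,\psi_n}}\leq\norm{T}\,\norm{A^{*}\psi_n}^2\to0$. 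You instead argue directly: positivity of the infimum is first converted, via the two-sided bound $c\norm{A^{*}\psi}^2\leq\abs{\paren{F\psi,\psi}}\leq\norm{T}\norm{A^{*}\psi}^2$ and a rescaling, into the a priori estimate $\abs{\paren{g,\psi}}\leq c'\norm{A^{*}\psi}_{X^{*}}$, and then a rank-one Douglas-type argument (well-defined bounded functional on $\operatorname{Range}(A^{*})$, extension, Riesz/reflexivity) produces $\varphi$ with $A\varphi=g$. Both are complete; your route isolates the clean abstract lemma ``$g\in\operatorname{Range}(A)$ iff $\abs{(g,\psi)}\lesssim\norm{A^{*}\psi}$'' and makes the role of coercivity purely that of identifying $\abs{\paren{F\psi,\psi}}$ with $\norm{A^{*}\psi}^2$, while the paper's density argument has the advantage of exhibiting the minimizing sequence that drives $\paren{F\psi,\psi}$ to zero, which is the phenomenon one actually sees numerically when the sampling point leaves $D$. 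The delicate points you flagged (conjugation bookkeeping in the rescaling and in the linearity of $\Lambda_0$, well-definedness via $g\perp\ker A^{*}$) are handled correctly, so no gap remains.
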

\begin{proof}
  Note that by assumptions (2.2) and (2.4), we obtain
    \begin{align*}
        \abs{\paren{F\psi,\psi}}&=\abs{\langle TA^{*}\psi,A^{*}\psi\rangle}\geq c\norm{A^{*}\psi}_{X^{*}}^2
    \end{align*}
    for all $\psi\in H$. Now assume $g=A\varphi$ for all $\varphi\in X$ and $g\neq 0$. Then for each $\psi\in H$ with $\paren{g,\psi}=1$, we obtain
    \begin{align*}
        c&=c\abs{\paren{g,\psi}}^2=c\abs{\paren{A\varphi,\psi}}^2=c\abs{\langle \varphi,A^{*}\psi\rangle}^2\leq c\norm{\varphi}_X^2\norm{A^{*}\psi}_{X^{*}}^2\leq \norm{\varphi}_X^2\abs{\paren{F\psi,\psi}},
    \end{align*}
    as desired.\\
    Conversely assume $g\neq A(X)$. We define $V\coloneqq [\text{span}\{g\}]^{\perp}$ and show $A^{*}(V)$ is dense in $\overline{A^{*}(H)}$. We can identify $X=J(X^{*})$, where $J$ is the antilinear isomorphism from the Riesz representation theorem given by
    \begin{align*}
        \langle \varphi,f\rangle&=\paren{\varphi, Jf},\quad \varphi\in X,f\in X^{*}.
    \end{align*}
    So $JA^{*}:H\to X$ is the adjoint of $A:X\to H$ and so it suffices to show $JA^{*}(V)$ is dense in $\overline{JA^{*}(H)}$. Let $\varphi=\displaystyle\lim_{n\to\infty}JA^{*}\psi_n$ with $\psi_n\in H$ be orthogonal to $JA^{*}(V)$. Then
    \begin{align*}
        (A\varphi,\psi)&=(\varphi,JA^{*}\psi)=0
    \end{align*}
    for all $\psi\in V$, implying $A\varphi\in V^{\perp}=\text{span}\{g\}$. But $g\neq A(X)$, so $A\varphi=0$. Then
    \begin{align*}
\norm{\varphi}^2&=\lim_{n\to\infty}\paren{\varphi,JA^{*}\psi_n}=\lim_{n\to\infty}\paren{A\varphi,\psi_n}=0.
    \end{align*}
    Hence $JA^{*}(V)$ is dense in $\overline{JA^{*}(H)}$. Now we can choose a sequence $\bra{\widetilde{\psi_n}}$ in $V$ so that
    \begin{align*}
        A^{*}\widetilde{\psi_n}\to -\frac{1}{\norm{g}^2}A^{*}g,\quad n\to\infty.
    \end{align*}
    Setting $\psi_n\coloneqq \widetilde{\psi_n}+\frac{1}{\norm{g}^2}g$, we have $(g,\psi_n)=1$ for all $n$ and $A^{*}\psi_n\to 0$ as $n\to\infty$. But observe that 
    \begin{align*}
        \abs{\paren{F\psi_n,\psi_n}}&\leq \norm{T}\norm{A^{*}\psi_n}_{X^{*}}^2\to 0,\quad n\to\infty,
    \end{align*}
    hence the result follows by contraposition.
\end{proof}
\noindent We will apply this theorem to the far-field operator $F$. We will choose the spaces $H=L^2([0,2\pi])$ and $X=H^{1/2}(\partial D)$. Then the assumptions of the theorem will hold for the operators $A=B:H^{1/2}(\partial D)\to L^2([0,2\pi])$, i.e., the data-to-solution operator and $T=-S^{*}:H^{-1/2}(\partial D)\to H^{1/2}(\partial D)$, the adjoint of the single-layer potential operator. \\
Recall from the linear sampling method that $F$ has the factorization $F=-BH$. The operators $B$ and $H$ are also related to the single-layer operator $S: H^{-1/2}(\partial D)\to H^{1/2}(\partial D)$ on $\partial D$, defined by
\begin{align*}
    S\psi(z)&=\int_{\partial D}\Phi(x,z)\psi(z)\,ds(z),\quad \psi \in H^{-1/2}(\partial D),
\end{align*}
with 
\begin{align*}
    \Phi(x,z)&\coloneqq \frac{i}{4}H_0^{(1)}(k|x-z|),
\end{align*}
the fundamental solution of the Helmholtz equation with $H_0^{(1)}$ denoting the Hankel function of the first kind. We can then verify that we have the relation
\begin{align}
    BS\psi&=H^{*}\psi, \quad \psi\in H^{-1/2}(\partial D).
\end{align}
Hence, the far field operator has the following factorization
\begin{align}
    F=-BS^{*}B^{*},
\end{align}
i.e., the following diagram commutes:
\[
\begin{tikzcd}[row sep=large, column sep = large]
H^{\frac{1}{2}}(\partial D) \arrow[leftarrow]{r}{S^{*}} \arrow[swap]{d}{B} & H^{-\frac{1}{2}}(\partial D) \arrow[<-]{d}{B^{*}} \\
L^2([0,2\pi])  \arrow{r}{F} & L^2([0,2\pi])
\end{tikzcd}
\]
Since the far field operator $F$ is normal and compact for the sound-soft obstacle, there exists a complete set of orthogonal eigenfunctions $g_n\in L^2([0,2\pi])$ with corresponding eigenvalues $\lambda_n\in L^2(\mathbb{C}), n\in \mathbb{N}$. The spectral theorem yields the eigenexpansion of $F$ given by (1.4). As a conclusion, the far field operator $F$ has a second factorization of the form
\begin{align}
    F&=(F^{*}F)^{1/4}J(F^{*}F)^{1/4},
\end{align}
where the operator $(F^{*}F)^{1/4}: L^2([0,2\pi])\to L^2([0,2\pi])$ is given by
\begin{align*}
    (F^{*}F)^{1/4}g&=\sum_{n=1}^{\infty}\sqrt{|\lambda_n|}(g,g_n)\,g_n, \quad g\in L^2([0,2\pi]),
\end{align*}
and $J: L^2([0,2\pi])\to L^2([0,2\pi])$ of $F$ is given by
\begin{align*}
    Jg&=\sum_{n=1}^{\infty}\frac{\lambda_n}{|\lambda_n|}(g,g_n)\,g_n, \quad g\in L^2([0,2\pi]).
\end{align*}
This factorization method is based on the following observation, namely,
\begin{align*}
    (Fg,g)&= (S^{*}B^{*}g, Bg)\\
        &\leq ||S^{*}||\cdot ||B^{*}Bg||\\
        &=||S||\cdot||Bg||^2.
\end{align*}
If $F$ had a square root, i.e., 
\begin{align*}
    (Fg,g)&=||F^{1/2}g||^2\geq \alpha ||Bg||, \quad \alpha>0,
\end{align*}
then $\mathcal{R}(B)=\mathcal{R}(F^{1/2})$, where $\mathcal{R}$ denotes the range of the linear operator. However, $F$ is not a positive operator, but we can form a unique positive operator of $F$, namely, $|F|=(F^{*}F)^{1/2}$, since $F$ is normal. Then $\mathcal{R}(B)=\mathcal{R}(|F|^{1/2})$, and since we know $\Phi_{\infty}(\cdot,z)\in \mathcal{R}(B)$ if and only if $z\in D$, we have 
\begin{align}
    \Phi_{\infty}(\cdot,z)\in \mathcal{R}(|F|^{1/2}) \iff  z\in D.
\end{align}
Hence, what is advantageous about the factorization method is we obtain a nice result whose dependence lies only on the far field operator $F$. However, we incorporated a few additional assumptions that were not present in LSM:
\begin{enumerate}
    \item $S$ must be a bounded, coercive operator.
    \item $S$ must have the form $S=S_0+C$ for some compact operator $C$ and some self-adjoint operator $S_0$ which is coercive on $\mathcal{R}(B^{*})$.
    \item The compact, normal operator $F$ must additionally be injective.
\end{enumerate}
These three assumptions must hold for the single layer operator for the ranges of $B$ and $|F|^{1/2}$ to coincide. Fortunately, we have the following lemma.
\begin{lemma}
    Assume $k^2$ is not a Dirichlet eigenvalue of $-\Delta$ in $D$. Then the following holds.
    \begin{enumerate}
        \item $\operatorname{Im}\langle\varphi, S\varphi\rangle\neq 0$ for all $\varphi\in H^{-1/2}(\partial D)$ with $\varphi\neq 0$.
        \item Let $S_i$ be the single layer operator of $S$ corresponding to the wave number $k=i$. Then the operator $S_i$ is self-adjoint and coercive as an operator from $H^{-1/2}(\partial D)\to H^{1/2}(\partial D)$.
        \item The difference $S-S_i$ is compact from $H^{-1/2}(\partial D)\to H^{1/2}(\partial D)$.
    \end{enumerate}
\end{lemma}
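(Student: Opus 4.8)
The plan is to route all three statements through the classical single-layer potential attached to $S$ (and, for $k=i$, to $S_i$), together with the jump relations and Green's identities. For $\varphi\in H^{-1/2}(\partial D)$ set $u(x)\coloneqq\int_{\partial D}\Phi(x,y)\varphi(y)\,ds(y)$ for $x\in\mathbb{R}^2\setminus\partial D$; then $u$ solves the Helmholtz equation in $D$ and in $\mathbb{R}^2\setminus\overline{D}$, satisfies the Sommerfeld condition, has a continuous trace $u|_{\partial D}=S\varphi$ from either side, and its normal derivative jumps by $\partial_\nu u^{-}-\partial_\nu u^{+}=\varphi$ (with $\nu$ the outward normal to $D$). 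For part (1) I would write $\langle\varphi,S\varphi\rangle=\langle\partial_\nu u^{-}-\partial_\nu u^{+},\,u|_{\partial D}\rangle$ and apply Green's first identity in $D$ and in the exterior truncated at a large circle. The interior contribution is $\int_D(\abs{\nabla u}^2-k^2\abs{u}^2)$, which is real; letting the circle tend to infinity and invoking the radiation condition, the exterior contribution is $\int_{\mathbb{R}^2\setminus\overline D}(\abs{\nabla u}^2-k^2\abs{u}^2)$ plus a term at infinity equal to a nonzero constant multiple of $\norm{u_\infty}_{L^2(\Omega)}^2$. Taking imaginary parts yields $\operatorname{Im}\langle\varphi,S\varphi\rangle=c\,\norm{u_\infty}_{L^2(\Omega)}^2$ with $c\neq 0$; if this vanished, Rellich's lemma would force $u\equiv 0$ in $\mathbb{R}^2\setminus\overline D$, hence $u|_{\partial D}=0$ by trace continuity, hence $u\equiv 0$ in $D$ since $k^2$ is not a Dirichlet eigenvalue, and then the jump relation gives $\varphi=0$, a contradiction.

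For part (2), at $k=i$ the fundamental solution becomes $\Phi_i(x,y)=\frac{1}{2\pi}K_0(\abs{x-y})$ with $K_0$ the modified Bessel function, a real, symmetric, positive kernel, so self-adjointness of $S_i$ on the $H^{\mp 1/2}(\partial D)$ scale is immediate from symmetry of the kernel. For coercivity I would run the same Green's-identity computation with the single-layer potential $u_i$ of $\varphi$: since $\Delta u_i=u_i$ off $\partial D$ and $u_i$ decays exponentially, there is no boundary term at infinity and one obtains $\langle\varphi,S_i\varphi\rangle=\norm{u_i}_{H^1(\mathbb{R}^2)}^2\geq 0$. To turn this into $\langle\varphi,S_i\varphi\rangle\geq c\norm{\varphi}_{H^{-1/2}(\partial D)}^2$ I would use the bounded normal-trace map $\{v:\ v\in H^1(G),\ \Delta v\in L^2(G)\}\to H^{-1/2}(\partial G)$ on $G=D$ and $G=\mathbb{R}^2\setminus\overline D$; since $\varphi=\partial_\nu u_i^{-}-\partial_\nu u_i^{+}$ and $\norm{\Delta u_i}_{L^2}=\norm{u_i}_{L^2}\leq\norm{u_i}_{H^1}$, this gives $\norm{\varphi}_{H^{-1/2}(\partial D)}\leq C\norm{u_i}_{H^1(\mathbb{R}^2)}$, which closes the estimate.

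For part (3) the key is a near-diagonal cancellation: $\Phi(x,y)-\Phi_i(x,y)=\frac{i}{4}H_0^{(1)}(k\abs{x-y})-\frac{1}{2\pi}K_0(\abs{x-y})$, and the logarithmic singularity $-\frac{1}{2\pi}\log\abs{x-y}$ is independent of the wave number, so it drops out. Expanding $H_0^{(1)}$ and $K_0$ in their small-argument series shows the next possibly singular contribution is $\log\abs{x-y}\cdot(J_0(k\abs{x-y})-I_0(\abs{x-y}))=O(\abs{x-y}^2\log\abs{x-y})$, so $\Phi-\Phi_i$ extends to a continuous — in fact $C^1$ or smoother — kernel on $\partial D\times\partial D$. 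Hence $S-S_i$ is smoothing of order strictly greater than $1$ on the curve $\partial D$, e.g. it maps $H^{-1/2}(\partial D)$ into $H^{3/2}(\partial D)$, and compactness from $H^{-1/2}(\partial D)$ to $H^{1/2}(\partial D)$ follows from the compact Rellich embedding $H^{3/2}(\partial D)\hookrightarrow H^{1/2}(\partial D)$.

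The part I expect to demand the most care is not any single computation above but the bookkeeping of mapping properties on the $H^{\pm 1/2}(\partial D)$ scale: that $S$ is bounded and elliptic of order $-1$, that the single-layer trace is continuous while its normal derivative jumps by exactly $\varphi$ in the correct Sobolev sense, and that the improved regularity of the difference kernel in part (3) really translates into an order gain between boundary Sobolev spaces rather than merely between H\"older spaces. These are cleanest to handle via the pseudodifferential calculus on the closed curve $\partial D$, where $S$ has the $k$-independent logarithmic part as its principal symbol; this simultaneously makes the isomorphism-modulo-compact structure of $S$ and the compactness of $S-S_i$ transparent, and it is the technical heart of the argument.
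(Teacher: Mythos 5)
The paper itself gives no proof of this lemma; it only cites \cite{kirsch2007factorization} and \cite{colton1998inverse}, and your argument is essentially the standard proof found in those references: for (1), the jump relations and Green's identities reduce $\operatorname{Im}\langle\varphi,S\varphi\rangle$ to a nonzero multiple of $\norm{u_\infty}_{L^2(\Omega)}^2$, and Rellich's lemma together with the assumption that $k^2$ is not a Dirichlet eigenvalue forces $\varphi=0$ if this vanishes; for (2), the kernel $\frac{1}{2\pi}K_0(\abs{x-y})$ is real, symmetric and exponentially decaying, the energy identity gives $\langle\varphi,S_i\varphi\rangle=\norm{u_i}_{H^1(\mathbb{R}^2)}^2$, and the dual normal-trace estimate converts this into coercivity; for (3), the wave-number-independent logarithmic singularity cancels in $\Phi-\Phi_i$. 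The one overstatement is the claim that $S-S_i$ maps $H^{-1/2}(\partial D)$ into $H^{3/2}(\partial D)$: the difference kernel is of the form $a(x,y)\abs{x-y}^2\ln\abs{x-y}$ plus smooth terms, hence $C^1$ with H\"older first derivatives but not $C^2$, which cleanly yields boundedness into $H^{s}(\partial D)$ only for some $s>1/2$ (e.g. into $H^{1}(\partial D)$); since that already suffices for compactness into $H^{1/2}(\partial D)$ via the compact embedding, the conclusion and the overall structure of your proof stand.
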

\noindent A proof of this lemma is provided in \cite{kirsch2007factorization} and \cite{colton1998inverse}. This lemma establishes the choice of $T=-S^{*}$.
\begin{corollary}
    Let $F$ be the far field operator and assume $k^2$ is not a Dirichlet eigenvalue of the negative Laplacian in $D$. Then \begin{align*}
        z\in D&\iff \inf\bra{\abs{(F\psi,\psi)}\,:\, \psi\in L^2([0,2\pi]),\,\paren{\psi,\Phi_{\infty}(\cdot,z)}=1}>0.
    \end{align*}
\end{corollary}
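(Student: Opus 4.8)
The plan is to obtain the corollary as an immediate application of the optimization theorem above, with the operator identifications already announced in the text, fed by the factorization $F=-BS^{*}B^{*}$ and the range characterization (1.2). Concretely, I would apply that theorem with the Hilbert space $L^2([0,2\pi])$, the space $X=H^{1/2}(\partial D)$ with dual $X^{*}=H^{-1/2}(\partial D)$, the operator $A=B$ (the data-to-solution operator of Problem 2) and $T=-S^{*}$. Then $A^{*}=B^{*}\colon L^2([0,2\pi])\to H^{-1/2}(\partial D)$, and the factorization hypothesis (2.2), $F=ATA^{*}$, is exactly (2.7). Boundedness of $F$, $B$ and $S^{*}$ being already known, the only hypothesis of the optimization theorem that still requires work is the coercivity estimate (2.4) for $T=-S^{*}$ on $A^{*}(H)=B^{*}(L^2([0,2\pi]))$.

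Establishing (2.4) is the heart of the matter, and it is precisely what the preceding lemma (together with the standing assumption that $k^2$ is not a Dirichlet eigenvalue of $-\Delta$ in $D$) is meant to supply; this is the step I expect to be the main obstacle. I would argue by contradiction: if no such $c>0$ existed, pick $\varphi_n=B^{*}g_n$ with $\|\varphi_n\|_{H^{-1/2}(\partial D)}=1$ and $\langle\varphi_n,S\varphi_n\rangle\to 0$, and pass to a weakly convergent subsequence $\varphi_n\rightharpoonup\varphi$ in the reflexive space $H^{-1/2}(\partial D)$. Writing $S=S_i+(S-S_i)$ as in the lemma, compactness of $S-S_i$ gives $\langle\varphi_n,(S-S_i)\varphi_n\rangle\to\langle\varphi,(S-S_i)\varphi\rangle$, while self-adjointness and coercivity of $S_i$, applied to $\langle\varphi_n-\varphi,S_i(\varphi_n-\varphi)\rangle$ and passed to the limit, yield $c\,\limsup_n\|\varphi_n-\varphi\|^2\le-\langle\varphi,S\varphi\rangle$. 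Comparing imaginary parts of the two sides forces $\operatorname{Im}\langle\varphi,S\varphi\rangle=0$, hence $\varphi=0$ by part (1) of the lemma, and then $c\,\limsup_n\|\varphi_n\|^2\le 0$ contradicts $\|\varphi_n\|=1$. Since the single-layer kernel is symmetric, this bound for $S$ transfers to $-S^{*}$, giving (2.4); the careful bookkeeping in this limiting argument is where the work lies.

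With all hypotheses verified, I would invoke the optimization theorem with $g=\Phi_{\infty}(\cdot,z)$, which is admissible since $\Phi_{\infty}(\hat{x},z)=\gamma e^{-ik\hat{x}\cdot z}$ never vanishes, so $\Phi_{\infty}(\cdot,z)\neq 0$ in $L^2([0,2\pi])$. Its conclusion (2.5) then reads
\[
\Phi_{\infty}(\cdot,z)\in B\paren{H^{1/2}(\partial D)}\ \iff\ \inf\bra{\,\abs{\paren{F\psi,\psi}}\,:\,\psi\in L^2([0,2\pi]),\ \paren{\Phi_{\infty}(\cdot,z),\psi}=1\,}>0.
\]
Because the normalization $\paren{\Phi_{\infty}(\cdot,z),\psi}=1$ is equivalent to $\paren{\psi,\Phi_{\infty}(\cdot,z)}=1$, the right-hand side above is exactly the infimum appearing in the corollary. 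Finally, I would substitute the equivalence (1.2), namely $z\in D\iff\Phi_{\infty}(\cdot,z)\in B\paren{H^{1/2}(\partial D)}$, into the left-hand side, which produces the claimed equivalence and finishes the proof.
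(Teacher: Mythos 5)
Your proposal is correct and follows exactly the route the paper intends: apply Theorem 3 with $H=L^2([0,2\pi])$, $X=H^{1/2}(\partial D)$, $A=B$, $T=-S^{*}$ via the factorization (2.7), with coercivity of $T$ on $\overline{B^{*}(L^2)}$ supplied by Lemma 4, and then translate the range condition through the characterization (1.2). Your compactness-and-weak-limit argument for the coercivity estimate is precisely the standard detail the paper leaves to the cited lemma and references, so the two proofs coincide in substance.
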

\noindent So we have a variational method for determining $D$ from knowledge of the far field pattern for all incident and observation directions, though this approach is especially time-consuming in practice since it demands solving a minimization problem for every sampling point $z$. The factorization method uses a more efficient approach in the spirit of the LSM, involving range identity characterization. \\
 
\noindent The factorization method essentially characterizes the range of the boundary operator $B$ (and therefore the obstacle $D$) in terms of the measurement operator $F$, i.e., in terms of the singular system of $F$.
\begin{theorem}
    Let $X,H$ be Hilbert spaces and let the bounded operators $F,T,A$ satisfy the assumptions of Theorem 3 with $T=S^{*}$ additionally satisfying the few additional assumptions (1)-(3). In addition let $F:H\to H$ be compact, injective and assume that $I+i\gamma F$ is unitary for some $\gamma>0$. Then the ranges of $A(X)$ and $|F|^{1/2}(H)$ coincide.
\end{theorem}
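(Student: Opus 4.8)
The plan is to recognize this assertion as the abstract ``range identity'' at the core of the factorization method, and to extract the equality $A(X)=\abs{F}^{1/2}(H)$ from the symmetric factorization $F=ATA^{*}$ together with the structural hypotheses on $T$ and the unitarity of $I+i\gamma F$. I would apply it with $A=B\colon H^{1/2}(\partial D)\to L^{2}([0,2\pi])$ and $T$ the operator occurring in the factorization $F=-BS^{*}B^{*}$ of~(2.7). The facts in hand: $B$ is compact, injective and has dense range (Theorem~1, since $k^{2}$ is not a Dirichlet eigenvalue); by the lemma on the single-layer operator $S$, the operator $T$ decomposes as $T=T_{0}+C$ with $T_{0}$ self-adjoint and coercive and $C$ compact; and the coercivity estimate~(2.4), $\abs{\langle Tf,f\rangle}\ge c\,\norm{f}^{2}$, holds on $B^{*}(H)$. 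First I would unpack the consequences of $I+i\gamma F$ being unitary: expanding $(I+i\gamma F)(I+i\gamma F)^{*}=(I+i\gamma F)^{*}(I+i\gamma F)=I$ gives $F-F^{*}=i\gamma FF^{*}=i\gamma F^{*}F$, so $F$ is normal; being moreover compact and injective, it then has a spectral decomposition $F=\sum_{n}\lambda_{n}(\,\cdot\,,g_{n})\,g_{n}$ with $\{g_{n}\}$ a complete orthonormal system, and the same identity gives $\operatorname{Im}F=\tfrac{\gamma}{2}F^{*}F=\tfrac{\gamma}{2}\abs{F}^{2}\ge0$. For normal $F$ one also has $\abs{(Fg,g)}\le(\abs{F}g,g)$ for all $g$.

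Next I would reduce the statement to two operator inequalities by invoking two standard Hilbert-space facts: (i) for any bounded operator, $A(X)=(AA^{*})^{1/2}(H)$, which follows from the polar decomposition $A=(AA^{*})^{1/2}U$; and (ii) Douglas' range-inclusion lemma, that for positive operators $P,Q$ one has $P^{1/2}(H)\subseteq Q^{1/2}(H)$ if and only if $P\le c\,Q$ for some $c>0$. Thus $\mathcal{R}(B)=\mathcal{R}(\abs{F}^{1/2})$ is equivalent to the pair of bounds $BB^{*}\le c_{1}\abs{F}$ and $\abs{F}\le c_{2}\,BB^{*}$. The first is the easy half: for $g\in H$ put $f=B^{*}g$; by (2.2)--(2.3) one has $(Fg,g)=\langle Tf,f\rangle$, so coercivity gives
\[
(BB^{*}g,g)=\norm{f}^{2}\le\tfrac1c\abs{\langle Tf,f\rangle}=\tfrac1c\abs{(Fg,g)}\le\tfrac1c(\abs{F}g,g),
\]
hence $BB^{*}\le\tfrac1c\abs{F}$ and $\mathcal{R}(B)\subseteq\mathcal{R}(\abs{F}^{1/2})$.

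The reverse inclusion is the crux. One cannot bound $(\abs{F}g,g)=\sum_{n}\abs{\lambda_{n}}\abs{(g,g_{n})}^{2}$ in terms of $\norm{B^{*}g}^{2}$ directly, because the form $(Fg,g)=\sum_{n}\lambda_{n}\abs{(g,g_{n})}^{2}$ may cancel down to an arbitrarily small fraction of $(\abs{F}g,g)$: the eigenvalues $\lambda_{n}$ lie on the circle $\abs{1+i\gamma\lambda}=1$, so they tend to $0$ tangentially to $\R$ and from both sides, and hence fill no proper sector of $\C$. To get around this I would pass to $F_{\#}\coloneqq\abs{\operatorname{Re}F}+\abs{\operatorname{Im}F}=\abs{\operatorname{Re}F}+\operatorname{Im}F$ (the last equality because $\operatorname{Im}F\ge0$). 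Since $F$ is normal with eigenvalues $\lambda_{n}=a_{n}+ib_{n}$, $b_{n}\ge0$, the positive operator $F_{\#}$ has eigenvalues $\abs{a_{n}}+b_{n}$, and $\abs{F}\le F_{\#}\le\sqrt2\,\abs{F}$, so $F_{\#}^{1/2}(H)=\abs{F}^{1/2}(H)$; it therefore suffices to prove $F_{\#}\le c\,BB^{*}$. The imaginary part is easy: $(\operatorname{Im}F\,g,g)=\operatorname{Im}(Fg,g)\le\abs{(Fg,g)}\le\norm{T}\norm{B^{*}g}^{2}=\norm{T}(BB^{*}g,g)$. For the real part, $\operatorname{Re}F=B\operatorname{Re}(T)B^{*}$, and the same lemma on $S$ lets one write the self-adjoint operator $M\coloneqq-\operatorname{Re}F$ as $M=B\,M_{1}\,B^{*}$ with $M_{1}$ self-adjoint equal to a coercive self-adjoint operator plus a compact one; since the coercive term, sandwiched by $B$ and $B^{*}$, is a non-negative operator and $B$ has dense range, $M$ is a compact self-adjoint operator with only finitely many negative eigenvalues. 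The remaining bound $\abs{M}\le c\,BB^{*}$, i.e.\ $\abs{\operatorname{Re}F}^{1/2}(H)=B(H^{1/2}(\partial D))$, is exactly what the core range-identity lemma delivers for $M$; granting it, $\abs{F}\le F_{\#}=\abs{\operatorname{Re}F}+\operatorname{Im}F\le(c+\norm{T})\,BB^{*}$, so $\abs{F}^{1/2}(H)\subseteq B(H^{1/2}(\partial D))$ and the proof closes.

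The genuine obstacle is that core lemma: for a self-adjoint $M=A\,M_{1}\,A^{*}$ with $A$ compact of dense range and $M_{1}$ the sum of a coercive self-adjoint operator and a compact one, $\abs{M}^{1/2}(H)=A(X)$. The subtlety is real, since a compact self-adjoint perturbation can change the range of a square root (it can turn $\operatorname{diag}(1/n)$ into $\operatorname{diag}(1/n^{2})$); it is held in check here only because the perturbation is \emph{sandwiched} between the same $A$ and $A^{*}$, with the coercive part surviving modulo the compact term and $M$ having at most finitely many negative eigenvalues --- which is exactly where the coercivity~(2.4) and the structural assumptions on $S$ enter essentially. Its proof is the usual careful comparison of the Picard-type series for $\abs{M}$ and for $AA^{*}$ along their a priori different eigenbases, as carried out in \cite{kirsch2007factorization}, and that is where I would expect the real work to lie.
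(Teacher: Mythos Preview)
Your approach is correct in outline but takes a genuinely different route from the paper's. The paper does \emph{not} pass through Douglas' lemma or the auxiliary operator $F_{\#}=\abs{\operatorname{Re}F}+\operatorname{Im}F$. Instead it writes down the second factorization $F=\abs{F}^{1/2}J\,\abs{F}^{1/2}$ with $J\psi=\sum_{n}s_{n}(\psi,\psi_{n})\psi_{n}$, $s_{n}=\lambda_{n}/\abs{\lambda_{n}}$, and then shows that $J$ itself is coercive. Since the eigenvalues of the unitary $I+i\gamma F$ force the $\lambda_{n}$ to sit on the circle of radius $1/\gamma$ centred at $i/\gamma$, the phases $s_{n}$ can accumulate only at $\pm1$; the paper rules out $-1$ by setting $\varphi_{n}=\lambda_{n}^{-1/2}A^{*}\psi_{n}$, noting $(T\varphi_{n},\varphi_{n})=s_{n}$, and playing the coercive piece $T_{0}$ against the compact piece $C$ to reach a contradiction. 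Once $J$ is coercive, Theorem~3 applies to \emph{both} factorizations $F=ATA^{*}$ and $F=\abs{F}^{1/2}J\abs{F}^{1/2}$, and the common infimum characterization~(2.5) gives $A(X)=\abs{F}^{1/2}(H)$ in one stroke. So the very obstacle you identify---that the phases might fail to lie in a proper sector---is exactly what the paper attacks head-on and resolves, whereas you route around it via $F_{\#}$. Your route is legitimate (it is essentially the Kirsch--Grinberg $F_{\#}$ machinery, which has the virtue of surviving when $F$ is not normal), but it outsources the decisive step to the self-adjoint ``core lemma'' you cite; the paper's argument is self-contained once Theorem~3 is in hand and makes sharper use of the unitarity hypothesis. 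A minor caution: your identity $(BB^{*}g,g)=\norm{B^{*}g}_{X^{*}}^{2}$ silently uses the Riesz identification $X\cong X^{*}$, which in the paper's Gelfand-triple setup requires inserting the isomorphism $J$ explicitly; this is harmless but worth stating.
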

\begin{proof}
    Since $I+i\gamma F$ is unitary, $F$ is normal. Therefore, the spectral theorem for compact, normal operators implies that there exists a complete orthonormal set of eigenelements $\psi_n\in H$ with corresponding eigenvalues $\lambda_n, n=1,2,\dots$. So we obtain the following expansion for $F$ applied to $\psi\in H$:
    \begin{align*}
        F\psi&=\sum_{n=1}^{\infty}\lambda_n\paren{\psi,\psi_n}\psi_n,\quad \psi\in H.
    \end{align*}
    From this, we note what we observed earlier, namely $F$ has a second factorization of the form
    \begin{align*}
        F&=|F|^{1/2}J|F|^{1/2},
    \end{align*}
    where the operator $|F|^{1/2}=(F^{*}F)^{!/4}$ is given by
    \begin{align*}
|F|^{1/2}\psi&=\sum_{n=1}^{\infty}\sqrt{|\lambda_n|}\paren{\psi,\psi_n}\psi_n,\quad \psi\in H,
    \end{align*}
    and $J: H\to H$ is given by
    \begin{align*}
        J\psi&=\sum_{n=1}^{\infty}\frac{\lambda_n}{\abs{\lambda_n}}\paren{\psi,\psi_n}\psi_n,\quad \psi\in H.
    \end{align*}
    We need only verify that $J$ in fact satisfies the coercivity property (2.4) in Theorem 3, and the result holds. Since the operator $I+i\gamma F$ is unitary the eigenvalues $\lambda_n$ lie on the circle of radius $r\coloneqq 1/\gamma$ and center $ri$. We set $s_n\coloneqq \displaystyle\frac{\lambda_n}{\abs{\lambda_n}}$. From $\abs{\lambda_n-ri}=r$ and the only accumulation point $\lambda_n\to 0,n\to\infty$, we conclude only $1$ or $-1$ are the only possible accumulation points of the sequence $(s_n)$. However, we will check that the only accumulation point is $1$. If $1$ is the only accumulation point, then we can write $s_n=e^{it_n}$, where $0\leq t_n\leq \pi-2\delta$ for all $n$ and some $0<\delta\leq \pi/2$. Then
    \begin{align*}
        \operatorname{Im}\bra{e^{i\delta}s_n}&\geq \sin\delta,\quad n\in \mathbb{N},
    \end{align*}
    and using $\abs{\paren{J\psi,\psi}}=\abs{e^{i\delta}\paren{J\psi,\psi}}$, we can estimate 
    \begin{align*}
        \abs{\paren{J\psi,\psi}}&\geq  \operatorname{Im}\bra{\sum_{n=1}^{\infty}e^{i\delta}s_n|(\psi,\psi_n)|^2}\geq \sin\delta \sum_{n=1}^{\infty}|(\psi,\psi_n)|^2=\sin\delta\norm{\psi}^2, \quad \psi\in H.
    \end{align*}
    We now verify that $-1$ is not an accumulation point. Suppose to the contrary it is. We define $\varphi_n\in X^{*}$ by
    \begin{align*}
        \varphi_n&\coloneqq \frac{1}{\sqrt{\lambda_n}}A^{*}\psi_n, \quad n\in \mathbb{N},
    \end{align*}
    where the branch of the square root is chosen so that $\operatorname{Im}\bra{\sqrt{\lambda_n}}>0$. Then from the factorization $ATA^{*}\psi_n=F\psi_n=\lambda_n\psi_n$, we observe that
    \begin{align*}
        (T\varphi_n,\varphi_n)&=s_n.
    \end{align*}
    So $T$ satisfies coercivity property and therefore $\bra{\varphi_n}$ is bounded. By the boundedness of $\varphi_n$, without loss of generality we may assume $s_n\to -1$ and $\varphi_n\to \varphi\in X^{*}$ for $n\to \infty$. Then we have 
    \begin{align*}
        (T_0\varphi_n,\varphi_n)+(C\varphi_n,\varphi_n)&=(T\varphi_n,\varphi_n)\to-1,\quad n\to \infty,
    \end{align*}
    but $C$ is compact and so $C\varphi_n\to C\varphi,\quad n\to \infty$. Hence,
    \begin{align*}
        \abs{\paren{C\varphi_n-C\varphi,\varphi_n}}&\leq \norm{C\varphi_n-C\varphi}\norm{\varphi_n}\to 0,\quad n\to \infty,
    \end{align*}
    meaning $\operatorname{Im}\bra{(T\varphi,\varphi)}=\operatorname{Im}\bra{(C\varphi,\varphi)}=0$ and therefore $\varphi=0$ by assumption of the theorem. So we get
    \begin{align*}
        (T_0\varphi_n,\varphi_n)&\to -1,\quad n\to \infty,
    \end{align*}
    contradicting the coercivity of $T$. So $1$ is the only accumulation point of $s_n$, finishing the proof.
\end{proof}
\begin{corollary}
    Let $F$ be the far field operator and assume $k^2$ is not a Dirichlet eigenvalue of $-\Delta$ in $D$. Then
    \begin{align}
        z\in D&\iff \Phi_{\infty}(\cdot,z)\in |F|^{\frac{1}{2}}(H^{\frac{1}{2}}(\partial D))\equiv(F^{*}F)^{\frac{1}{4}}(H^{\frac{1}{2}}(\partial D)).
    \end{align}
\end{corollary}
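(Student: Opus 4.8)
The plan is to derive the equivalence by feeding the concrete scattering data into the range-identity theorem above and then comparing the resulting equality of ranges with the geometric characterization~(1.2). Accordingly, I would apply that theorem with the Hilbert spaces $H=L^2([0,2\pi])$ and $X=H^{1/2}(\partial D)$, with $A=B$ the data-to-far-field operator of Problem~2, and with $T=-S^{*}$ the anti-linear adjoint of the single-layer operator $S\colon H^{-1/2}(\partial D)\to H^{1/2}(\partial D)$. Once the hypotheses are in place, the theorem delivers the operator-theoretic identity
\begin{align*}
    B\bigl(H^{1/2}(\partial D)\bigr)&=|F|^{1/2}\bigl(L^2([0,2\pi])\bigr)=(F^{*}F)^{1/4}\bigl(L^2([0,2\pi])\bigr).
\end{align*}
Substituting this into the equivalence $z\in D\iff\Phi_{\infty}(\cdot,z)\in B(H^{1/2}(\partial D))$ of~(1.2) then yields the claimed characterization at once; here $(F^{*}F)^{1/4}$ is read as an operator on $L^2([0,2\pi])$, consistently with~(2.9).

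To invoke the range-identity theorem I would verify its hypotheses in turn. The factorization $F=ATA^{*}=-BS^{*}B^{*}$ is exactly~(2.7), which itself follows from $F=-BH$ together with the intertwining relation $BS=H^{*}$ of~(2.6). The far-field operator $F$ is compact and normal by Theorem~1, and, since the standing assumption that $k^{2}$ is not a Dirichlet eigenvalue of $-\Delta$ in $D$ forces the Herglotz operator $H$ to be injective, Theorem~1 also gives that $F=-BH$ is injective. The unitarity of $I+i\gamma F$ for a suitable $\gamma>0$ is the conservation-of-energy relation for the sound-soft obstacle: it is equivalent to $\operatorname{Im}F=\gamma F^{*}F$ and follows from Green's identities applied to the total field together with the Sommerfeld radiation condition. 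Finally, the coercivity of $T=-S^{*}$ on the subspace $A^{*}(H)=B^{*}\bigl(L^2([0,2\pi])\bigr)$ is what the lemma on the single-layer operator is designed to supply: the decomposition $S=S_{i}+(S-S_{i})$ with $S_{i}$ self-adjoint and coercive on $H^{-1/2}(\partial D)$ and $S-S_{i}$ compact, combined with $\operatorname{Im}\langle\varphi,S\varphi\rangle\neq0$ for all $\varphi\neq0$, is the standard package guaranteeing that a compact perturbation of a coercive self-adjoint operator with definite imaginary form is again coercive, hence $|\langle T\varphi,\varphi\rangle|\geq c\,\|\varphi\|_{H^{-1/2}(\partial D)}^{2}$.

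The step I expect to be the main obstacle is this last coercivity verification, and in particular obtaining the bound on the \emph{correct} subspace: the optimization theorem (Theorem~3), and hence the range-identity theorem, demands the estimate on $A^{*}(H)=\mathcal{R}(B^{*})$, whereas the perturbation argument naturally delivers it only on the closure $\overline{\mathcal{R}(B^{*})}$ — harmless here because $S_{i}$ is coercive on all of $H^{-1/2}(\partial D)$, but a point that must be tracked. A secondary source of care is the bookkeeping for the anti-linear adjoint in the bilinear duality pairing of $H^{1/2}(\partial D)$ with $H^{-1/2}(\partial D)$: one should check that the operator to which the lemma is applied is genuinely $-S^{*}$ as it enters the factorization~(2.7), using the symmetry $\langle\varphi,S\psi\rangle=\langle\psi,S\varphi\rangle$ coming from $\Phi(x,y)=\Phi(y,x)$. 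With these in hand, the range-identity theorem applies verbatim and, together with~(1.2), proves the corollary.
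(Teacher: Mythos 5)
Your proposal is correct and follows exactly the route the paper intends: the corollary is obtained by applying the range-identity theorem (Theorem 5) with $H=L^2([0,2\pi])$, $X=H^{1/2}(\partial D)$, $A=B$, $T=-S^{*}$ — whose hypotheses are supplied by the factorization (2.7), Theorem 1, and Lemma 4 — and then combining the resulting identity $\mathcal{R}(B)=\mathcal{R}((F^{*}F)^{1/4})$ with the characterization (1.2). Your added care about reading $(F^{*}F)^{1/4}$ as an operator on $L^2([0,2\pi])$ and about the coercivity subspace is consistent with the paper's treatment.
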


\begin{corollary}[Picard's Theorem]
    Assume $k^2$ is not a Dirichlet eigenvalue of $-\Delta$ in $D$. Let $(|\lambda_n|,\varphi_n,g_n)$ be a singular system of $F$. Then
    \begin{align}
        z\in D\quad\iff\quad \sum_{n=1}^{\infty}\frac{|(\Phi_{\infty}(\cdot,z),\varphi_n)|^2}{|\lambda_n|}<\infty.
    \end{align}
\end{corollary}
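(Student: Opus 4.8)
The plan is to deduce the statement directly from the preceding Corollary together with Picard's solvability criterion for first‑kind equations with a compact operator; the work is essentially bookkeeping with the spectral decomposition already assembled in this section.

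First I would record the singular system of the operator $(F^{*}F)^{1/4}$ in terms of that of $F$. By Theorem 1, under the hypothesis that $k^{2}$ is not a Dirichlet eigenvalue of $-\Delta$ in $D$, the far field operator $F$ is compact, normal, and injective; the spectral theorem then gives a complete orthonormal system $\{g_{n}\}$ of eigenfunctions with eigenvalues $\lambda_{n}\neq 0$ and $\lambda_{n}\to 0$. Consequently $F^{*}F$ is compact, self‑adjoint, positive, and injective, with eigenvalues $|\lambda_{n}|^{2}$, and $(F^{*}F)^{1/4}$ is compact, self‑adjoint, positive, and injective, with eigenvalues $|\lambda_{n}|^{1/2}$. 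Since $F$ is normal, the right singular functions $\varphi_{n}$ of $F$ are eigenfunctions of $F^{*}F$, hence of $(F^{*}F)^{1/4}$, so a singular system of $(F^{*}F)^{1/4}$ may be taken to be $\big(\sqrt{|\lambda_{n}|},\,\varphi_{n},\,\varphi_{n}\big)$. (The phase ambiguities among $\varphi_{n}$, $g_{n}$, and the $F$‑eigenfunctions are harmless, since only the moduli $|(\Phi_{\infty}(\cdot,z),\varphi_{n})|$ appear.)

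Next I would invoke Picard's theorem for the compact operator $(F^{*}F)^{1/4}$ on $L^{2}([0,2\pi])$: an element $\psi$ belongs to its range if and only if $\psi$ is orthogonal to the cokernel and $\sum_{n}|\lambda_{n}|^{-1}|(\psi,\varphi_{n})|^{2}<\infty$. Because $(F^{*}F)^{1/4}$ is injective and self‑adjoint, its range is dense and the cokernel is trivial, so the orthogonality condition is vacuous and
\[
\Phi_{\infty}(\cdot,z)\in\mathcal{R}\!\left((F^{*}F)^{1/4}\right)\iff\sum_{n=1}^{\infty}\frac{|(\Phi_{\infty}(\cdot,z),\varphi_{n})|^{2}}{|\lambda_{n}|}<\infty .
\]
Combining this with the preceding Corollary, which states $z\in D\iff\Phi_{\infty}(\cdot,z)\in|F|^{1/2}(H^{1/2}(\partial D))=(F^{*}F)^{1/4}(H^{1/2}(\partial D))$, gives exactly the asserted equivalence.

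The main obstacle — and the step I would take most care over — is reconciling the two notions of ``range'' appearing in the two ingredients: Picard's theorem characterizes membership in $\mathcal{R}((F^{*}F)^{1/4})$ viewed as an operator on $L^{2}([0,2\pi])$, whereas the preceding Corollary is phrased via the range identity $\mathcal{R}(B)=\mathcal{R}(|F|^{1/2})$ with $B$ acting on $H^{1/2}(\partial D)$. One must observe that the range identity of Theorem 6 is precisely the equality of these two subsets of $L^{2}([0,2\pi])$, so the notation $|F|^{1/2}(H^{1/2}(\partial D))$ is just $\mathcal{R}((F^{*}F)^{1/4})$ and the closure hypothesis in Picard's theorem is automatic because $F$ injective and normal forces $\overline{\mathcal{R}((F^{*}F)^{1/4})}=L^{2}([0,2\pi])$. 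Once that identification is pinned down, everything else is the routine unwinding of the eigenexpansions of $F$ and $(F^{*}F)^{1/4}$.
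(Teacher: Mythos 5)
Your proposal is correct and follows essentially the route the paper intends: the corollary is an immediate consequence of the preceding range characterization $z\in D\iff\Phi_{\infty}(\cdot,z)\in\mathcal{R}\bigl((F^{*}F)^{1/4}\bigr)$ combined with the classical Picard criterion for the compact, self-adjoint, injective operator $(F^{*}F)^{1/4}$, whose singular system $\bigl(\sqrt{|\lambda_n|},\varphi_n,\varphi_n\bigr)$ you correctly read off from the eigensystem of the normal operator $F$. Your observation that $|F|^{1/2}(H^{1/2}(\partial D))$ must be read as $\mathcal{R}\bigl((F^{*}F)^{1/4}\bigr)\subset L^{2}([0,2\pi])$ via the range identity is exactly the right resolution of the paper's notational slip; the paper's later Riesz-basis argument (its main factorization theorem) re-derives the same series characterization with additional quantitative bounds, but is not needed for this statement.
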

The aid of a singular system $(|\lambda_n|,\varphi_n, g_n)$ of the operator $F$ helps us with the explicit characterization of the scatterer in terms of the range of $|F|^{1/2}=(F^{*}F)^{1/4}$ that can be used for a reconstruction of our scatterer. This is due to the compactness and normality of the far-field operator $F$. We emphasize that the scatterer is not required to be connected. Furthermore, for the application of the factorization method it is not necessary to know whether the scatterer is sound-soft or sound-hard. However, for the impedance boundary condition the far field operator $F$ is no longer normal;  however, the factorization method can modified to rectify this problem. 
\begin{corollary}
    Assume $k^2$ is not a Dirichlet eigenvalue of $-\Delta$ in $D$. For any $z\in \mathbb{R}^2$, the following are equivalent:
    \begin{enumerate}
        \item $z\in D$.
        \item $|F|^{\frac{1}{2}}g_z\equiv (F^{*}F)^{\frac{1}{4}}g_z=\Phi_{\infty}(\cdot,z)$ is solvable in $L^2([0,2\pi])$.
        \item $I(z)\coloneqq \left[\displaystyle\sum_{n=1}^{\infty}\frac{|(\Phi_{\infty}(\cdot,z),\varphi_n)|^2}{|\lambda_n|}\right]^{-1}>0$.
    \end{enumerate}
\end{corollary}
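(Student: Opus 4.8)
The plan is to recognize this corollary as a consolidation of the range identity underlying the factorization method (namely~(2.9) and~(2.10), the content of Theorem~6) together with the Picard-type criterion of~(2.11); none of the three conditions is genuinely new, so the task reduces to exhibiting each as a reformulation of the preceding results. Throughout, recall that $F$ is compact, normal and, under the eigenvalue hypothesis, injective, so that $(F^{*}F)^{1/4}$ is a compact, self-adjoint, positive, injective operator on $L^2([0,2\pi])$, diagonalized by the complete orthonormal system $\{\varphi_n\}$ from the singular system $(|\lambda_n|,\varphi_n,g_n)$ of $F$, with eigenvalues $\sqrt{|\lambda_n|}$.

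First I would establish $(1)\iff(2)$. By~(2.9), which rests on the identity $\mathcal{R}(B)=\mathcal{R}(|F|^{1/2})$ between subspaces of $L^2([0,2\pi])$ (the common codomain of $B$ and $(F^{*}F)^{1/4}$), we have $z\in D$ if and only if $\Phi_{\infty}(\cdot,z)$ lies in the range of $|F|^{1/2}=(F^{*}F)^{1/4}$ acting on $L^2([0,2\pi])$. By the very definition of the range, this membership says precisely that there exists $g_z\in L^2([0,2\pi])$ with $(F^{*}F)^{1/4}g_z=\Phi_{\infty}(\cdot,z)$, i.e.\ that the equation in~(2) is solvable; this step is purely formal.

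Next I would prove $(2)\iff(3)$, which turns the abstract solvability into the concrete series form. Since $(F^{*}F)^{1/4}$ is compact, injective, positive and self-adjoint, its singular functions are its eigenfunctions $\varphi_n$ and its singular values are $\sqrt{|\lambda_n|}$; Picard's theorem for equations of the first kind then says that $(F^{*}F)^{1/4}g_z=\Phi_{\infty}(\cdot,z)$ is solvable in $L^2([0,2\pi])$ if and only if
\begin{align*}
\sum_{n=1}^{\infty}\frac{|(\Phi_{\infty}(\cdot,z),\varphi_n)|^{2}}{(\sqrt{|\lambda_n|})^{2}}=\sum_{n=1}^{\infty}\frac{|(\Phi_{\infty}(\cdot,z),\varphi_n)|^{2}}{|\lambda_n|}<\infty,
\end{align*}
the orthogonality condition in Picard's theorem being vacuous here because $(F^{*}F)^{1/4}$ is injective with dense range. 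Moreover $\Phi_{\infty}(\cdot,z)$ is never the zero element of $L^2([0,2\pi])$, since $|\Phi_{\infty}(\cdot,z)|$ is identically equal to the positive constant $|\gamma|$; hence whenever this series converges it converges to a strictly positive number, and, with the usual convention $1/\infty=0$, its finiteness is equivalent to $I(z)>0$. This gives $(2)\iff(3)$; since $(1)\iff(3)$ is exactly the Picard's Theorem corollary~(2.11), the three statements are equivalent.

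The content of the corollary was already carried by Theorem~6 and its corollaries, so I do not anticipate a genuine obstacle. The two points that warrant a sentence of care are: first, that the functions appearing in the Picard criterion may be taken to be the $\varphi_n$ of the singular system, which holds because the self-adjoint positive operator $(F^{*}F)^{1/4}$ has its singular functions equal to its eigenfunctions; and second, the bookkeeping of the reciprocal-of-infinity convention, so that the inequality $I(z)>0$ faithfully records convergence of the defining series rather than its numerical value.
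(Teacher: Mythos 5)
Your proposal is correct and follows essentially the same route the paper intends: the equivalence $(1)\iff(2)$ is just the range identity $\mathcal{R}(B)=\mathcal{R}\bigl((F^{*}F)^{1/4}\bigr)$ from Theorem 6/Corollary 7, and $(2)\iff(3)$ is Picard's criterion for the compact, self-adjoint, injective operator $(F^{*}F)^{1/4}$ with singular values $\sqrt{|\lambda_n|}$, exactly as in Corollary 8. Your added care about the vacuous orthogonality condition, the strict positivity of the series (since $\Phi_{\infty}(\cdot,z)\neq 0$), and reading the range as acting on $L^2([0,2\pi])$ rather than $H^{1/2}(\partial D)$ is sound and, if anything, tidies up the paper's statement.
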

\noindent Again, assuming $k^2$ is not a Dirichlet eigenvalue of $-\Delta$ in $D$, $F$ is normal, one-to-one and compact. Hence, we concluded there exist eigenfunctions $\varphi_n\in \mathbb{C}$ of $F$ that form a complete orthogonal system of $L^2(\Omega)$ with corresponding eigenvalues $\lambda_j\in \mathbb{C}$ of $F$ with $\lambda_n\neq 0$ for $n=1,2,\dots$. We note that $\abs{\lambda_n}$ are the singular values of $F$ and $\bra{|\lambda_n|,\varphi_n,\text{sign}(\lambda_n)\varphi_n}$ is a singular system of $F$. Here we called $\text{sign}(\lambda_n)\varphi_n=g_n$, where $\text{sign}(\lambda_n)=\displaystyle\frac{\lambda_n}{|\lambda_n|}$. By the factorization of $F$, it follows that
\begin{align*}
    -BS^{*}B^{*}\varphi_n&=\lambda_n\varphi_n, \quad n\in\mathbb{N}.
\end{align*}
Define the functions $\bra{\psi_n}\in L^2(\partial D)$ by $B^{*}\varphi_n=-\sqrt{\lambda_n}\psi_n,\, n\in\mathbb{N}$, where we choose a branch of $\sqrt{\lambda_n}$ such that $\operatorname{Im}{\sqrt{\lambda_n}}>0$. Then we obtain
\begin{align*}
    BS^{*}\psi_n&=\sqrt{\lambda_n}\varphi_n,\quad n\in \mathbb{N}.
\end{align*}
It can be shown that the sequence $\bra{\psi_n}$ forms a Riesz basis in the Sobolev space $H^{-\frac{1}{2}}(\partial D)$, i.e., $H^{-\frac{1}{2}}(\partial D)$ consists exactly of those functions $\psi$ of the form
\begin{align*}
    \psi&=\sum_{n=1}^{\infty}\alpha_n\psi_n\quad \text{with }\sum_{n=1}^{\infty}\abs{\alpha_n}^2<\infty.
\end{align*}
Furthermore, as $\bra{\psi_n}$ is a Riesz basis in $H^{-\frac{1}{2}}(\partial D)$, there exists a constant $c>1$ such that
\begin{align*}
    \frac{1}{c^2}\norm{\psi}_{H^{-\frac{1}{2}}(\partial D)}^2\leq \sum_{n=1}^{\infty}\abs{\alpha_n}^2\leq c^2\norm{\psi}_{H^{-\frac{1}{2}}(\partial D)}^2.
\end{align*}
We can now summarize the main theorem for factorization method.
\begin{theorem}
    Assume $k^2$ is not a Dirichlet eigenvalue of $-\Delta$ in $D$. Then the ranges of $B: H^{\frac{1}{2}}(\partial D)\to L^2(\Omega)$ are given by
    \begin{align}
        \mathcal{R}(B)=\bra{\sum_{n=1}^{\infty}\rho_n\varphi_n\,:\, \sum_{n=1}^{\infty} \frac{|\rho_n^z|^2}{|\lambda_n|}<\infty}=\mathcal{R}\paren{\paren{F^{*}F}^{\frac{1}{4}}}.
    \end{align}
    Here $\rho_n^z=\paren{\Phi_{\infty}(\cdot,z),\varphi_n}$ are the expansion coefficients of $\Phi_{\infty}(\cdot, z)$ with respect to $\bra{\varphi_n}$. $\bra{|\lambda_n|,\varphi_n,g_n}$ is the singular system of $F$.
\end{theorem}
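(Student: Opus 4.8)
The plan is to establish the two equalities in the statement one at a time. The right-hand identity $\mathcal{R}(B)=\mathcal{R}\big((F^{*}F)^{1/4}\big)$ is the abstract range identity (Theorem~7) transported to the far field setting, while the middle identity is nothing but the Picard description of the range of a compact positive operator written in its own singular basis.

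For the right-hand equality I would apply the range-identity theorem with $H=L^{2}(\Omega)$, $X=H^{1/2}(\partial D)$, $A=B$ the data-to-solution operator, and $T=-S^{*}$. Its hypotheses all hold here: the symmetric factorization (2.7), namely $F=-BS^{*}B^{*}$, is exactly $F=ATA^{*}$; the operator $F$ is compact and, since $k^{2}$ is not a Dirichlet eigenvalue of $-\Delta$ in $D$, injective (Theorem~1); $I+i\gamma F$ is unitary for a suitable $\gamma>0$ by the standard unitarity (energy) relation for the sound-soft far field operator; and the structural assumptions on the single-layer operator — boundedness, the splitting $S=S_{i}+(S-S_{i})$ into a self-adjoint part coercive on $\mathcal{R}(B^{*})$ plus a compact remainder, and $\operatorname{Im}\langle\varphi,S\varphi\rangle\neq0$ for $0\neq\varphi\in H^{-1/2}(\partial D)$ — are precisely the content of Lemma~5. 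The conclusion of the range-identity theorem then gives $\mathcal{R}(B)=|F|^{1/2}\big(L^{2}(\Omega)\big)$, and since $|F|^{1/2}=(F^{*}F)^{1/4}$ this is the right-hand equality.

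For the middle equality I would argue directly from the singular system $\{|\lambda_{n}|,\varphi_{n},g_{n}\}$ of $F$. Because $F$ is normal, compact, and injective, $(F^{*}F)^{1/4}$ is a positive compact operator with $(F^{*}F)^{1/4}\varphi_{n}=\sqrt{|\lambda_{n}|}\,\varphi_{n}$ and all $\lambda_{n}\neq0$, acting on $L^{2}(\Omega)$ by $(F^{*}F)^{1/4}\big(\sum_{n}c_{n}\varphi_{n}\big)=\sum_{n}\sqrt{|\lambda_{n}|}\,c_{n}\varphi_{n}$. Hence $\phi=\sum_{n}\rho_{n}\varphi_{n}$ lies in $\mathcal{R}\big((F^{*}F)^{1/4}\big)$ precisely when the equations $\sqrt{|\lambda_{n}|}\,c_{n}=\rho_{n}$ admit a solution $(c_{n})\in\ell^{2}$, i.e.\ precisely when $\sum_{n}|\rho_{n}|^{2}/|\lambda_{n}|<\infty$; this is Picard's criterion (Corollary~9). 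Specializing to $\rho_{n}=\rho_{n}^{z}=(\Phi_{\infty}(\cdot,z),\varphi_{n})$ and combining with the equivalence $z\in D\iff\Phi_{\infty}(\cdot,z)\in\mathcal{R}(B)$ of (1.2) shows all three descriptions in the statement coincide, and in particular recovers Corollary~8.

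I expect little new obstacle at the level of the present theorem, because the analytic substance has already been spent elsewhere. The one delicate point is the coercivity of $T=-S^{*}$ on all of $\mathcal{R}(B^{*})$ — not merely on the image of the self-adjoint part $S_{i}$ — which is exactly where the injectivity of $F$, equivalently $\operatorname{Im}\langle\varphi,S\varphi\rangle\neq0$, has to be used; but this is precisely the ``$-1$ is not an accumulation point of $\lambda_{n}/|\lambda_{n}|$'' step already carried out in the proof of the range-identity theorem. What remains is bookkeeping: checking that the abstract hypotheses genuinely match the concrete operators and keeping track of the sign $T=-S^{*}$ and of the mapping properties between $H^{\pm 1/2}(\partial D)$ and $L^{2}(\Omega)$; the Picard step is routine spectral theory.
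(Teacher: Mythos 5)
Your proposal is correct and follows essentially the same route as the paper, which presents this theorem as a summary of its preceding development: the symmetric factorization $F=-BS^{*}B^{*}$ together with Lemma 5 feeding into the range identity theorem (Theorem 7) to get $\mathcal{R}(B)=\mathcal{R}\big((F^{*}F)^{1/4}\big)$, and the singular system of the normal, injective, compact operator $F$ giving the Picard-series description of that range. Your remarks on the sign $T=-S^{*}$ and on where injectivity/$\operatorname{Im}\langle\varphi,S\varphi\rangle\neq 0$ enter match the paper's treatment.
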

This main result yields a complete characterization of our domain $D$.
\begin{corollary}
Assume $k^2$ is not a Dirichlet eigenvalue of $-\Delta$ in $D$. Then
\begin{align}
    D=\bra{z\in \mathbb{R}^2\,:\,\sum_{n=1}^{\infty} \frac{|\rho_n^z|^2}{|\lambda_n|}<\infty}=\bra{z\in \mathbb{R}^2\,:\, \Phi_{\infty}(\cdot,z)\in \mathcal{R}\paren{\paren{F^{*}F}^{\frac{1}{4}}}}
\end{align}
Moreover, there exists a constant $c>1$ such that 
\begin{align}
    \frac{1}{c^2}\norm{\Phi(\cdot,z)}_{H^{-\frac{1}{2}}(\partial D)}^2\leq \sum_{n=1}^{\infty} \frac{|\rho_n^z|^2}{|\lambda_n|}\leq c^2\norm{\Phi(\cdot,z)}_{H^{-\frac{1}{2}}(\partial D)}^2
\end{align}
for all $z\in D$, which describes how the value of the series blows up as $z\in \partial D$. Additionally, $\norm{\Phi(\cdot,z)}_{H^{-\frac{1}{2}}(\partial D)}$ behaves like $\ln{|d(z,\partial D)|}$ in $\mathbb{R}^2$, where $d(z,\partial D)$ denotes the distance of $z\in D$ from the boundary $\partial D$.
\end{corollary}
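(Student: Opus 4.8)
The plan is to split the corollary into three parts (the set equality for $D$, the two-sided norm estimate on $D$, and the logarithmic boundary rate in $\mathbb{R}^2$) and to observe that the first two are bookkeeping on top of the range identity $\mathcal{R}(B)=\mathcal{R}\bigl((F^*F)^{1/4}\bigr)$ established above, while only the third requires genuine analysis.

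For the set equality I would concatenate characterizations already in hand. By (1.2), $z\in D$ if and only if $\Phi_\infty(\cdot,z)\in\mathcal{R}(B)$; by the main factorization theorem, $\mathcal{R}(B)=\mathcal{R}\bigl((F^*F)^{1/4}\bigr)$; and since $\{\varphi_n\}$ is a complete orthonormal system with $(F^*F)^{1/4}\varphi_n=\sqrt{|\lambda_n|}\,\varphi_n$ and every $\lambda_n\neq 0$ (the eigenvalue hypothesis makes $F$ injective), Picard's criterion gives that $h=\sum_n c_n\varphi_n$ lies in $\mathcal{R}\bigl((F^*F)^{1/4}\bigr)$ exactly when $\sum_n |c_n|^2/|\lambda_n|<\infty$. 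Taking $h=\Phi_\infty(\cdot,z)$, whose expansion coefficients are $c_n=\rho_n^z=(\Phi_\infty(\cdot,z),\varphi_n)$, produces the chain $z\in D\iff\sum_n|\rho_n^z|^2/|\lambda_n|<\infty\iff\Phi_\infty(\cdot,z)\in\mathcal{R}\bigl((F^*F)^{1/4}\bigr)$, which is the first displayed identity.

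For the norm sandwich on $z\in D$ I would exhibit the density explicitly and invoke the Riesz-basis bounds for $\{\psi_n\}$ (with the same constant $c>1$). Set $\alpha_n:=\rho_n^z/\sqrt{\lambda_n}$, the branch chosen so $\operatorname{Im}\sqrt{\lambda_n}>0$; by the first part $(\alpha_n)$ is square-summable, so $\psi_z:=\sum_n\alpha_n\psi_n$ is a well-defined element of $H^{-1/2}(\partial D)$. Applying $B$ term by term and using $BS^*\psi_n=\sqrt{\lambda_n}\,\varphi_n$ gives $B(S^*\psi_z)=\sum_n\alpha_n\sqrt{\lambda_n}\,\varphi_n=\sum_n\rho_n^z\,\varphi_n=\Phi_\infty(\cdot,z)$; comparing with $B\bigl(\Phi(\cdot,z)|_{\partial D}\bigr)=\Phi_\infty(\cdot,z)$ from the proof of (1.2) and using injectivity of $B$, I conclude $S^*\psi_z=\Phi(\cdot,z)|_{\partial D}$, so that $\psi_z$ is exactly the single-layer density representing $\Phi(\cdot,z)$ on $\partial D$ (well defined because $S$, hence $S^*$, is an isomorphism $H^{-1/2}(\partial D)\to H^{1/2}(\partial D)$ under the eigenvalue hypothesis, by the Lemma on the single-layer operator). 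Since $\sum_n|\alpha_n|^2=\sum_n|\rho_n^z|^2/|\lambda_n|$, the Riesz-basis inequality $\tfrac{1}{c^2}\|\psi_z\|^2\le\sum_n|\alpha_n|^2\le c^2\|\psi_z\|^2$ yields the stated two-sided estimate, where $\|\Phi(\cdot,z)\|_{H^{-1/2}(\partial D)}$ is interpreted as the $H^{-1/2}(\partial D)$-norm of this density $\psi_z=(S^*)^{-1}\bigl(\Phi(\cdot,z)|_{\partial D}\bigr)$.

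For the growth rate I would study $\|\psi_z\|_{H^{-1/2}(\partial D)}$ as $\delta:=d(z,\partial D)\to 0$. Since $S-S_i$ is compact and $S_i$ is an isomorphism, replacing $S$ by the wavenumber-$i$ single layer $S_i$ perturbs $\psi_z$ only by an $H^{-1/2}(\partial D)$-bounded amount, so it suffices to treat the modified-Helmholtz kernel $\Phi_i$; there a Green's-identity computation identifies $\psi_z$, up to sign, with $\partial_\nu G_D(\cdot,z)|_{\partial D}$, the normal derivative of the Dirichlet Green's function, because the single-layer potential of $\psi_z$ coincides with $\Phi_i(\cdot,z)$ in $\mathbb{R}^2\setminus\overline{D}$ and with $\Phi_i(\cdot,z)-G_D(\cdot,z)$ in $D$. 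As $z\to z_0\in\partial D$, $\partial_\nu G_D(\cdot,z)|_{\partial D}$ behaves near $z_0$ like the half-plane Poisson kernel at height $\delta$, whose local Fourier transform decays like $e^{-\delta|\xi|}$, so $\|\psi_z\|_{H^{-1/2}(\partial D)}^2\asymp\int\frac{e^{-2\delta|\xi|}}{1+|\xi|}\,d\xi\asymp\int_0^{1/\delta}\frac{d\xi}{1+\xi}\asymp|\ln\delta|$, which is the claimed rate; combined with the first part, this shows the series stays finite on $D$, diverges as $z\to\partial D$, and is infinite off $\overline{D}$, so it reconstructs $D$. I expect this last step to be the only real obstacle: the algebraic and range-theoretic part collapses the problem to estimating a logarithmic single-layer density near $\partial D$, and pinning down the exact $|\ln d(z,\partial D)|$ behavior (rather than merely the fact that the series blows up) needs the local singular-integral analysis sketched here; this rate is classical and may alternatively be quoted from \cite{kirsch2007factorization}.
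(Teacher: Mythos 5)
Your proposal is correct and follows essentially the same route as the paper, which offers no standalone proof of this corollary but assembles it exactly as you do: the set equality from (1.2), the range identity $\mathcal{R}(B)=\mathcal{R}\paren{(F^{*}F)^{1/4}}$ of Theorem 10 together with Picard's criterion, and the two-sided bound from the Riesz-basis inequality for $\{\psi_n\}$ with the same constant $c$. Your explicit identification of the density $\psi_z=(S^{*})^{-1}\paren{\Phi(\cdot,z)|_{\partial D}}$ — which is the right reading of the paper's loosely written norm $\norm{\Phi(\cdot,z)}_{H^{-1/2}(\partial D)}$, since the literal $H^{-1/2}$ norm of the trace would not blow up — and your sketch of the logarithmic rate supply details the paper leaves implicit or quotes from the literature without proof.
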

Our new indicator function is now given by
\begin{align}
    I(z)&=\brak{\sum_{n=1}^{\infty} \frac{|\rho_n^z|^2}{|\lambda_n|}}^{-1}=\brak{\sum_{n=1}^{\infty} \frac{|\paren{\Phi_{\infty}(\cdot,z),\varphi_n}|^2}{|\lambda_n|}}^{-1}.
\end{align}
A major advantage of the factorization method is that we have a way of completely characterizing the scatterer $D$ using $I(z)$, noting 
\begin{align*}
    z\in D&\iff I(z)>0.
\end{align*}
Here we summarize the factorization method as a numerical scheme.
\begin{remark}[Factorization Method]
    The factorization method, like the LSM, provides a scheme for the reconstruction of $D$ by a regularized solution to (2.1) from the scattering data $u_{\infty}(\Hat{x},d)$ for $\Hat{x},d\in \Omega$, where $\Omega$ is the unit circle or sphere for dimensions $2$ and $3$ respectively.
    \begin{enumerate}
        \item Choose a sampling grid $\mathcal{G}$, a regularization parameter $\alpha>0$ and a cut-off constant $c_0$.
        \item For all $z\in \mathcal{G}$ solve the regularized version of the integral equation (2.1) using some appropriate regularization method (usually Tikhonov-Morozov regularization) with parameter $\alpha$.
        \item Calculate a reconstruction $M$ for $D$ by
        \begin{align*}
            M&\coloneqq \bra{z\in \mathcal{G}\, :\, \norm{g_{\alpha,z}}\leq c_0}.
        \end{align*}
        Equivalently, for factorization method in particular, 
        \begin{align*}
            M&\coloneqq \bra{z\in \mathcal{G}\,:\, I(z)>0},
        \end{align*}
        where $I(z)$ is the indicator function given in (2.15). The benefit of the factorization method is that we can calculate $\norm{g_{\alpha,z}}_{L^2(\Omega)}$ directly from the spectral data of $F$. Namely, let $\bra{\sigma_j,\psi_j,\widetilde{\psi}_j}$ be a singular system of $F$. Then we have the representation
        \begin{align*}
            (F^{*}F)^{\frac{1}{4}}g_z&=\sum_{j=1}^{\infty}\sqrt{\sigma_j}\langle g_z,\psi_j\rangle_{L^2}\psi_j.
        \end{align*}
        Thus, the regularized solution has the explicit form
        \begin{align*}
            g_{\alpha,z}&=\sum_{j=1}^{\infty}\frac{\sqrt{\alpha}}{\alpha+\sigma_j}\rho_j^z\psi_j,
        \end{align*}
        and the norm is given by
        \begin{align*}
            \norm{g_{\alpha,z}}^2&=\sum_{j=1}^{\infty}\frac{\alpha}{(\alpha+\sigma_j)^2}\abs{\rho_j^2},
        \end{align*}
        where $\rho_j^2=\langle \Phi_{\infty}(\cdot,z),\psi_j\rangle_{L^2}$.
    \end{enumerate}
\end{remark}

\section{Comparing the Factorization Method to Linear Sampling Method}
The factorization method looks for a solution to the linear equation
\begin{align}
    (F^{*}F)^{\frac{1}{4}}g_z&=\Phi_{\infty}(\cdot,z)
\end{align}
which is ill-posed since $(F^{*}F)^{\frac{1}{4}}:L^2([0,2\pi])\to L^2([0,2\pi])$ is compact. Thus, a regularization scheme is needed to compute the solution to (2.12). Using Tikhonov regularization, a regularization solution $g_{\alpha,z}$ is defined as the solution to the well-posed equation
\begin{align}
    \alpha g_{\alpha,z}+(F^{*}F)^{\frac{1}{2}}g_{\alpha,z}&=(F^{*}F)^{\frac{1}{4}}\Phi_{\infty}(\cdot,z)
\end{align}
where $\alpha>0$ is the regularization parameter that can be chosen according to the Morozov discrepancy principle such that
\begin{align*}
  \norm{(F^{*}F)^{\frac{1}{4}}g_{\alpha,z}-\Phi_{\infty}(\cdot,z)}&=\delta\norm{g_{\alpha,z}}
\end{align*}
with $\delta>0$ referring to the error in the measured far field data. Unlike the far-field equation $Fg_z=\Phi(\cdot,z)$ of the LSM, the modified version (2.12) is in fact solvable if and only if $z\in D$ \cite{cakoni2014qualitative}. Hence, it is possible to obtain a convergence result for the regularized solution of (2.12) when $\delta\to 0$.\\

Unlike the linear sampling method, the factorization method provides a rigorous and exact characterization of the obstacle that is fully explicit and solely based on the measurement operator $F$. The linear sampling method fails to have this feature, since, for points $z$ inside the scatterer, the theoretical framework for the method claims the existence of approximate solutions to the far-field equation. However, how to determine these approximate solutions remains unclear due to the fact that the far field pattern of the fundamental solution is almost never an element of the range of the operator $F$ but the boundary or solution operator $B$ \cite{lechleiter2009factorization}. Recall that the boundary operator $B$ is also called the solution operator since $B$ maps data on the obstacle to the far field pattern of the radiating solution to the Helmholtz equation taking that data from the obstacle, i.e., $B: H^{\frac{1}{2}}(\partial D)\to L^2([0,2\pi]): f\mapsto u_{\infty}$. Though factorization method requires additional assumptions on both the far field operator and its factorizations compared to linear sampling, in return the range of the solution operator coincides with the range of the ``square root" of the far field operator. Thus, for the factorization method, additional structural assumptions are needed to obtain results on range identities for operator factorizations. \\

The factorization method can be seen as a refinement of linear sampling. Both methods use an indicator function to determine whether a point $z$ in a grid is inside or outside the scatterer. Computing the norm of a possible solution $g_z$ to the far-field equation or modified version in factorization for many sampling points $z$ and plotting these norms yields an image of the scatterer. Both algorithms are very efficient compared to other techniques since their numerical implementation requires only a single computation of the singular value decomposition of a discretization of the far field operator \cite{lechleiter2009factorization}. However, the main drawback of both LSM and factorization method is the large amount of data needed for the qualitative inversion procedure. 
\section{Class of Direct Imaging Methods: Probe Methods}
So far we discussed the simplest sampling concepts developed by Colton and Kirsch for the LSM and factorization methods. These sampling methods are called point sampling methods since they provide indicator functions that decide whether the sampling point is inside the interior of the scatterer. Point sampling is not only nice for its simplicity, but these schemes allow us to reconstruct scatterers which consist of an unknown number of separate components or which are not simply connected.\\

In addition to point sampling methods, there are probe methods. Again a special feature of these probe methods, like sampling methods, is that they work even if the boundary condition or physical properties of the scatterer are unknown. In other words, these probe methods do not require \textit{a priori} information of the boundary condition. The basic idea of probe methods is based on the singular behavior of the scattered field of the incident point source on the boundary of the obstacle. We will discuss two such algorithms proposed by Ikehata and Potthast, namely the \textit{probe method} \cite{ikehata1998reconstruction} and the \textit{method of singular sources} \cite{potthast2000stability}. These two methods differ from the previous point sampling methods in that they use different indicator functions that blow up when approaching the boundary of some scatterer. However, the approach to constructing these indicator functions is very distinct from the point sampling methods. Namely, we will take a cone or needle approach to shape reconstruction. The idea is that we will locate the singularity of some point source or singular solution at the tip of some cone or needle. We take an approximation domain, which is chosen as a subset of the complement of the cone or needle. We then use the cone or needle to probe the area under consideration. Both the singular sources method and probe method are commonly based on the behavior of the scattered field $\Phi^s(x,z)$ for incident point sources $\Phi(\cdot,z)$ or higher incident multipoles.
\begin{figure}[!ht]
\centering
\resizebox{1\textwidth}{!}{%
\begin{circuitikz}[scale=0.5]
\tikzstyle{every node}=[font=\normalsize]
\draw [, line width=1.1pt ] (6.75,9.25) ellipse (0.25cm and 1.5cm);
\draw [, line width=0.9pt ] (7.25,9) circle (4cm);
\draw [, line width=1.1pt](11.25,8.5) to[short, -o] (8.25,8.5);
\draw [, line width=1.1pt ] (4.25,11) rectangle (8,7.25);
\draw [, line width=1.1pt ] (4.25,11) rectangle  (8,7.25);
\node [font=\tiny] at (5.35,9.00) {scatterer};
\node [font=\tiny] at (9.5,8.25) {needle};
\node [font=\tiny] at (6.99,11.25) {Approximate Domain G};
\node [font=\tiny] at (7.25,5.80) {outer domain B};
\end{circuitikz}
}%
\caption{The figure shows an approximation domain $G$ containing some elliptic scatterer. On the domain of approximation singular sources are approximated. The needle probes for an approximation to the singular source in the full exterior of the needle. The tip of the needle indicates the location of the singularity. }
\label{The Needle Approach}
\end{figure}
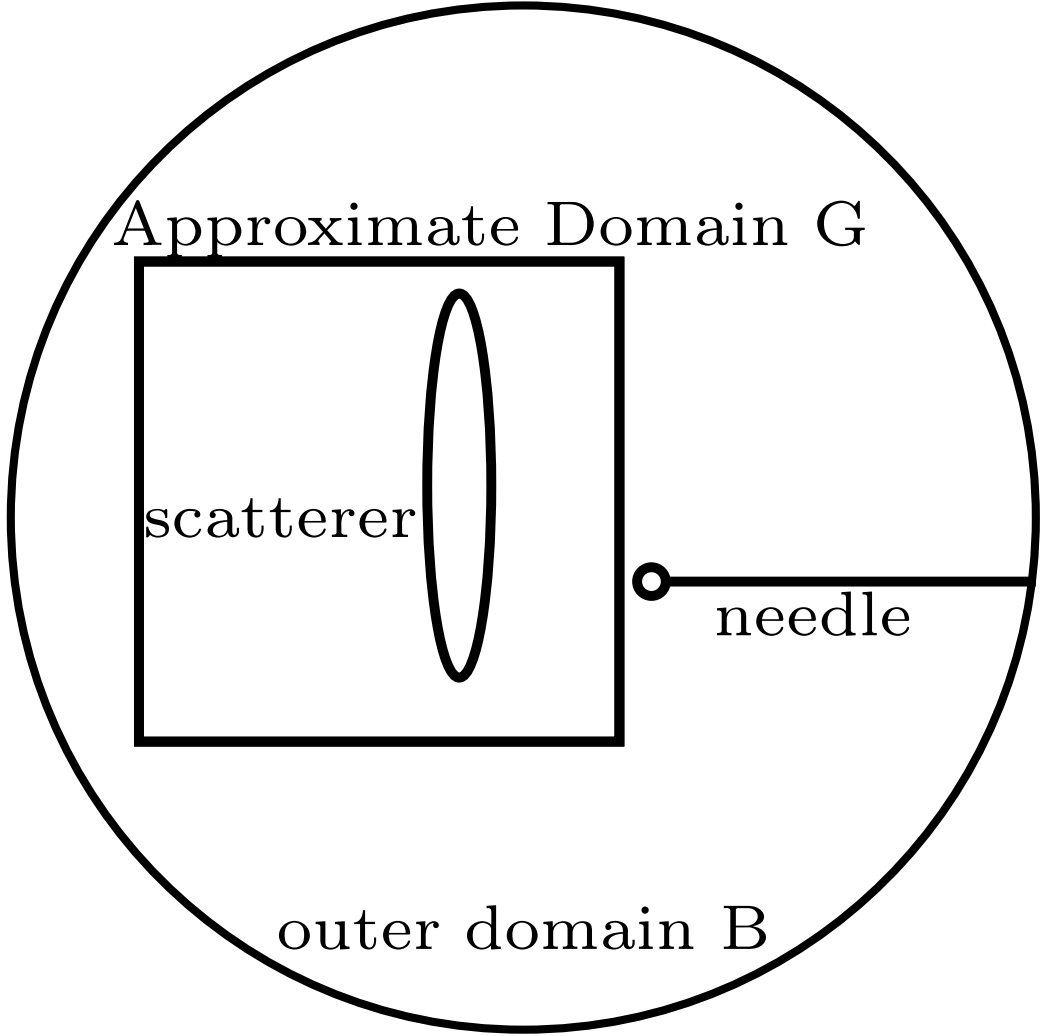
\section{The Method of Singular Sources for Shape Reconstruction}
In the method of singular sources introduced by Potthast, we want to reconstruct the scattered field $\Phi^s(z,z)$ of the singular sources $\Phi(\cdot,z)$ which has singularity in the source point $z$. The idea is to construct an indicator functional defined as the magnitude of the scattered field $\Phi^s(z,z)$ of singular sources $\Phi(z,z)$, which is computed explicitly by backprojection of the form
\begin{align}
    \Phi^s(y,z)&\approx \int_{\Omega}\int_{\Omega}u_{\infty}(\Hat{x},\Hat{y})g(-\Hat{y},z)\,ds(\Hat{y})ds(\Hat{x}),\quad y,z\in \mathbb{R}^m\setminus \overline{D},
\end{align}
for explicitly constructed kernels $g(\cdot,\cdot)$. We will see that this indicator functional also blows up at the boundary of the obstacle if the singularity is chosen appropiately, i.e., 
\begin{align}
    \abs{\Phi^s(z,z)}\to \infty, \quad z\to\partial D
\end{align}
for an acoustic sound-soft or sound-hard scatterer $D$. \\
We again consider the sound-soft acoustic scattering model problem given by (0.1)-(0.3). Assume that the far-field pattern data is retrieved for all $-\Hat{x},d\in \Omega$, where $\Omega$ denotes the unit sphere (or circle for $m=2$). Then the far-field pattern corresponding to a point source $\Phi(\cdot,z)$ has an approximation
\begin{align*}
    \Phi_{\infty}(\Hat{x},z)&\approx \int_{\Omega}u_{\infty}(\Hat{x},d)g_z(d)\,ds(d),\quad \Hat{x}\in \Omega,
\end{align*}
via superposition. Now consider Green's representation formulae for the scattered field and far field pattern of the solution $u$ to (0.1)-(0.3):
\begin{align}
    u^s(x)&=\int_{\partial D}\paren{\Phi(x,y)\frac{\partial u^s}{\partial \nu}(y)-\frac{\partial \Phi(x,y)}{\partial \nu(y)}u^s(y)}\,ds(y),\quad x\in\mathbb{R}^{m}\setminus\overline{D},\\
    u_{\infty}(\Hat{x})&=\gamma\int_{\partial D}\paren{e^{-ik\Hat{x}\cdot y}\frac{\partial u^s}{\partial \nu}(y)-\frac{\partial e^{-ik\Hat{x}\cdot y}}{\partial \nu(y)}u^s(y)}\, ds(y),\quad \Hat{x}\in \Omega,
\end{align}
with the constant
\begin{align*}
    \gamma&\coloneqq\begin{cases}
        \displaystyle\frac{e^{i\pi/4}}{\sqrt{8\pi k}}, \quad m=2\\
        \displaystyle\frac{1}{4\pi},\quad m=3
    \end{cases}
\end{align*}
for radiating solutions of the Helmholtz equation. The method of singular sources is based off Green's formulae, where we have an approximation of the point source $\Phi(\cdot,z)$ at $z$ as an incident field, namely,
\begin{align}
    \Phi(x,z)&\approx \int_{\Omega}e^{ikx\cdot d}g_z(d)\,ds(d),\quad x\in G_z
\end{align}
on some approximation domain $G_z$ with $\overline{D}\subset G_z$ and $z\notin \overline{G_z}$. The estimate is understood in terms of uniformity in $x$ on compact subsets of $G_z$ for fixed $z$. By inserting the approximation of (5.5) into (5.3), and with (5.4) we obtain the approximation
\begin{align}
    u^s(z)&\approx \frac{1}{\gamma}\int_{\Omega}u_{\infty}(-\Hat{x})g_z(\Hat{x})\,ds(\Hat{x})
\end{align}
for the scattered field. This approximation is valid so long as the scatterer $D$ is contained in the approximation domain $G$, i.e., $\overline{D}\subset G_z$. See Figure 4.1. Now we apply (5.6) to the far-field data $\Phi_{\infty}$ and the right-hand side of the previous approximation of this far-field data to obtain
\begin{align}
    \Phi^s(z,z)&\approx \frac{1}{\gamma}\int_{\Omega}\paren{\int_{\Omega}u_{\infty}(-\Hat{x},d)g_z(d)\,ds(d)}\widetilde{g}_z(d)\,ds(\Hat{x}),\quad z\in \mathbb{R}^m\setminus\overline{D},
\end{align}
with some density $\widetilde{g}_z\in L^2(\Omega)$. We will see that the unknown shape is determined by the set of points $z$ where the approximation (5.7) blows up.
The following lemma justifies examining the singular behavior of $\Phi^s(z,z)$ on the boundary.

\begin{lemma}
    If $D\subset \mathbb{R}^2$ is an open set then there exists constants $\tau,c>0$ such that
    \begin{align*}
        \norm{\Phi(\cdot,z)}_{H^{1}(D)}^2&\leq c\abs{\ln d(z,D)}\\
        \norm{\frac{\partial \Phi(\cdot,z)}{\partial \nu}}_{H^{-\frac{1}{2}}(\partial D)}^2&\leq c\abs{\ln d(z,D)}
    \end{align*}
    for every $z\notin D$, which satisfies $0< d(z,D)<\tau$. Furthermore, for every $z\in \mathbb{R}^2\setminus\overline{D}$, we have 
    \begin{align*}
        \norm{\Phi(\cdot,z)}_{H^1(D)}^2&\leq C\abs{\ln d(z,D)}+E,
    \end{align*}
    where the constants $C$ and $E$ depend only on D. 
\end{lemma}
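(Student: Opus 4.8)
The plan is to estimate the $H^1(D)$-norm of the fundamental solution $\Phi(\cdot,z) = \frac{i}{4}H_0^{(1)}(k|\cdot - z|)$ by splitting the domain $D$ into the part near the singularity and the part away from it. First I would recall the known asymptotics of the Hankel function of the first kind of order zero near the origin: $H_0^{(1)}(t) \sim \frac{2i}{\pi}\ln t$ as $t \to 0$, so that $\Phi(x,z) = O(|\ln|x-z||)$ and $\nabla_x \Phi(x,z) = O(|x-z|^{-1})$ as $x \to z$, while for $|x-z|$ bounded away from $0$ the function and its gradient are bounded (decaying like $|x-z|^{-1/2}$ at infinity, but $D$ is bounded so that is irrelevant). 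These are the only analytic facts needed; everything else is integration in polar coordinates.

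Next, for $z \notin \overline{D}$ with $d := d(z,D)$ small, I would write $D \subset B(z,R)\setminus B(z,d)$ for $R = \operatorname{diam}(D) + d$ bounded uniformly, and estimate
\begin{align*}
    \int_D |\Phi(x,z)|^2\,dx + \int_D |\nabla\Phi(x,z)|^2\,dx &\leq C\int_d^R \paren{|\ln r|^2 + r^{-2}} r\,dr.
\end{align*}
The dominant term is $\int_d^R r^{-1}\,dr = \ln(R/d) = \ln R + |\ln d|$, which for $d < \tau$ small is bounded by $c|\ln d(z,D)|$ after absorbing the (bounded) $\ln R$ contribution and the (convergent) $\int_d^R r|\ln r|^2\,dr$ term into the constant; choosing $\tau$ small enough that $|\ln d| \geq 1$ makes this clean. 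For the surface estimate on $\|\partial_\nu \Phi(\cdot,z)\|_{H^{-1/2}(\partial D)}$, I would use that the trace operator and the normal-derivative operator are bounded from $H^1(D)$-type spaces (more precisely, bound the $H^{-1/2}(\partial D)$-norm of $\partial_\nu \Phi$ by the $H^1$-norm of $\Phi$ on a collar neighborhood, or invoke the standard trace estimate $\|\partial_\nu v\|_{H^{-1/2}(\partial D)} \leq C\|v\|_{H^1(D)}$ valid for solutions of the Helmholtz equation); then the bound follows immediately from the first estimate. For the last assertion, when $z \in \mathbb{R}^2\setminus\overline{D}$ ranges over possibly large distances, the integral $\int_d^R(|\ln r|^2 + r^{-2})r\,dr$ still only grows logarithmically in $1/d$ near $\partial D$ and is otherwise controlled by $\operatorname{diam}(D)$, giving $\|\Phi(\cdot,z)\|_{H^1(D)}^2 \leq C|\ln d(z,D)| + E$ with $C, E$ depending only on $D$ (through $\operatorname{diam}(D)$ and the wave number $k$).

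The main obstacle will be the surface-norm estimate: the negative-order Sobolev norm $\|\cdot\|_{H^{-1/2}(\partial D)}$ is defined by duality, so one cannot integrate pointwise asymptotics directly against $ds$. The cleanest route is to recognize that $\Phi(\cdot,z)$ solves the Helmholtz equation in a neighborhood of $\overline{D}$ (since $z \notin \overline{D}$), so the pair $(\Phi(\cdot,z)|_{\partial D}, \partial_\nu\Phi(\cdot,z)|_{\partial D})$ are Cauchy data of a solution and satisfy the a priori bound $\|\partial_\nu v\|_{H^{-1/2}(\partial D)} \leq C(\|v\|_{L^2(D)} + \|\Delta v\|_{L^2(D)}) \leq C'\|v\|_{H^1(D)}$ for $v$ solving $\Delta v + k^2 v = 0$; applying this with $v = \Phi(\cdot,z)$ reduces the second inequality to the first. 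One must be slightly careful that the constant $C'$ does not degenerate — it depends only on $D$ and $k$, not on $z$ — which is exactly why we need $\Phi(\cdot,z)$ to solve the PDE on a \emph{fixed} neighborhood, uniformly for $z$ near $\partial D$; this is guaranteed once $d(z,D) < \tau$ for $\tau$ fixed in terms of the geometry of $D$.
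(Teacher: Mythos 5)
Your proof of the volume estimate is essentially the paper's own argument: both isolate the singular behavior of $\Phi(\cdot,z)$ near $x=z$ (the paper by subtracting the Laplace fundamental solution $\Phi_0$, you by quoting the small-argument asymptotics of $H_0^{(1)}$ directly, which is equivalent), observe that the dominant contribution is $|\nabla\Phi|^2\sim|x-z|^{-2}$, and integrate in polar coordinates over $B_R(z)\setminus B_d(z)\supset D$ to get $\int_d^R r^{-1}\,dr=\ln(R/d)$, absorbing everything else into constants. Where you go beyond the paper is the second inequality: the paper dismisses it as ``similar in verification,'' which is not quite honest since the $H^{-1/2}(\partial D)$ norm cannot be obtained by pointwise integration of asymptotics; your reduction via the normal-trace estimate $\norm{\partial_\nu v}_{H^{-1/2}(\partial D)}\leq C\paren{\norm{v}_{H^1(D)}+\norm{\Delta v}_{L^2(D)}}$, combined with $\Delta\Phi=-k^2\Phi$ in $D$ (valid since $z\notin\overline{D}$), is the correct way to make that step rigorous and costs only a constant depending on $D$ and $k$. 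One small remark: the constant in that trace estimate depends only on $D$, not on any neighborhood of $\overline{D}$, so your worry about uniformity in $z$ is unnecessary, though harmless.
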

\begin{proof}
    Let $\phi_0(x,z)$ denote the fundamental solution of the Helmholtz equation with wave number $k=0$, i.e., the Laplace equation. We have that $\Phi(x,z)-\Phi_0(x,z)$ is differentiable for all $x,z\in \mathbb{R}^2$, hence $\norm{\Phi(\cdot,z)-\Phi_0(\cdot,z)}_{H^1(D)}^2$ is bounded. Thus it suffices to show that
    \begin{align}
        \norm{\Phi_0(\cdot,z)}_{H^1(D)}^2&\leq C|\ln d(z,D)|+E,
    \end{align}
    for every $z\in \mathbb{R}^2\setminus\overline{D}$. In order to verify this, we recall that we have
    \begin{align*}
        \Phi_0(x,z)&=\frac{1}{2\pi}\ln{\frac{1}{|x-z|}},\\
        \nabla_x\Phi_0(x,z)&=-\frac{1}{2\pi}\frac{x-z}{|x-z|^2}.
    \end{align*}
    Then observe that 
    \begin{align*}
        \norm{\Phi_0(\cdot,z)}_{H^1(D)}^2&\leq C_1\int_D\frac{1}{|x-z|^2}+\paren{\ln{\frac{1}{|x-z|}}}^2\,dx\leq C_2\int_D \frac{1}{|x-z|^2}\,dx\\
        &= C_2 \int_{D\cap B_R(z)}\frac{1}{|x-z|^2}\,dx+C_2\int_{D\setminus B_R(z)}\frac{1}{|x-z|^2}\,dx
    \end{align*}
    where $B_R(z)$ is the ball with the center $z$ and radius $R$. Note that the second integral is clearly bounded because of the boundedness of the scatterer $D$ and the fact that $|x-z|>R$. Additionally, if $d(z,D)=h$, with $z\in \mathbb{R}^2\setminus\overline{D}$ (otherwise the first integral diverges), then for every $x\in D\cap B_R(z)$, we have $h\leq |x-z|\leq R$; therefore, the first integral is bounded from above by 
    \begin{align*}
        C\int_{h}^{R} \frac{2\pi r}{r^2}\,dr &\leq E\ln{\frac{R}{h}}.
    \end{align*}
    So there are constants $C,E>0$ such that, for every $h$,
    \begin{align*}
        \norm{\Phi_0(\cdot,z)}_{H^1(D)}^2&\leq C+E\ln{\frac{1}{h}},
    \end{align*}
    where $h=d(z,D)$. This relation is synonymous with (5.8), as desired. The second inequality in the lemma is similar in verification.
\end{proof}
The next theorem states that our indicator functional, $\abs{\Phi^s(z,z)}$, satisfies the \textit{blow-up property}, i.e., it blows up near the boundary.
\begin{theorem}
    Let $\Phi^s(\cdot,z)$ be the scattered field corresponding to the point source $\Phi(\cdot,z)$. For both the sound-soft and sound-hard boundary conditions, we have
    \begin{align*}
        \lim_{z\to z^{*}}\abs{\Phi^s(z,z)}=\infty,
    \end{align*}
    for all $z^{*}\in \partial D$.
\end{theorem}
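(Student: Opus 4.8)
The plan is to express $\Phi^s(z,z)$ as a quadratic form in a boundary density and then to show that this form diverges as $z\to z^{*}$, arguing by contradiction with the help of the preceding lemma and the structural facts about the single-layer operator $S$ established earlier. I treat the sound-soft case in detail; the sound-hard case is parallel. Since the exterior Dirichlet problem is well-posed, $\Phi^s(\cdot,z)$ is the radiating Helmholtz solution in $\mathbb{R}^2\setminus\overline{D}$ with boundary data $-\Phi(\cdot,z)|_{\partial D}$, and I would represent it as a single-layer potential $\Phi^s(x,z)=\int_{\partial D}\Phi(x,y)\varphi_z(y)\,ds(y)$ with density $\varphi_z\in H^{-1/2}(\partial D)$ solving $S\varphi_z=-\Phi(\cdot,z)|_{\partial D}$; when $k^2$ is an interior Dirichlet eigenvalue one replaces this by a combined single- and double-layer ansatz, with no change to what follows. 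Evaluating the potential at $x=z$, using the symmetry $\Phi(z,y)=\Phi(y,z)$ and the boundary equation, gives
\begin{align*}
    \Phi^s(z,z)=\int_{\partial D}\Phi(y,z)\,\varphi_z(y)\,ds(y)=-\langle\varphi_z,S\varphi_z\rangle,
\end{align*}
where $\langle\cdot,\cdot\rangle$ denotes the bilinear $H^{-1/2}(\partial D)$--$H^{1/2}(\partial D)$ pairing.

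The first ingredient I would establish is the sharp counterpart of the preceding lemma for the Dirichlet trace: $\norm{\Phi(\cdot,z)|_{\partial D}}_{H^{1/2}(\partial D)}\to\infty$ as $z\to z^{*}\in\partial D$. In boundary coordinates near $z^{*}$ the trace behaves like $\tfrac{1}{2\pi}\ln\tfrac{1}{|x-z|}$, which converges almost everywhere on $\partial D$, as $z\to z^{*}$, to $\tfrac{1}{2\pi}\ln\tfrac{1}{|x-z^{*}|}$; the latter has a logarithmic singularity at $z^{*}$ and therefore does not lie in $H^{1/2}(\partial D)$, one-dimensional fractional Sobolev regularity being exactly borderline for $\ln|\cdot|$, so by lower semicontinuity of the Gagliardo seminorm under a.e.\ convergence (Fatou applied to the double integral) the $H^{1/2}(\partial D)$ norms of the traces diverge. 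Since $S\colon H^{-1/2}(\partial D)\to H^{1/2}(\partial D)$ is bounded, this forces $\norm{\varphi_z}_{H^{-1/2}(\partial D)}\ge\norm{S\varphi_z}_{H^{1/2}(\partial D)}/\norm{S}\to\infty$.

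The hard part is transferring this divergence from $\norm{\varphi_z}$ to the quadratic form, since $\langle\varphi_z,S\varphi_z\rangle$ is complex-valued and not sign-definite, so one cannot simply bound $\abs{\Phi^s(z,z)}$ from below by a multiple of $\norm{\varphi_z}^2$. I would argue by contradiction: suppose that along a sequence $z_n\to z^{*}$ the values $\abs{\Phi^s(z_n,z_n)}=\abs{\langle\varphi_{z_n},S\varphi_{z_n}\rangle}$ stay bounded while $\norm{\varphi_{z_n}}\to\infty$; then $\psi_n\coloneqq\varphi_{z_n}/\norm{\varphi_{z_n}}$ satisfies $\langle\psi_n,S\psi_n\rangle\to0$, and passing to a weakly convergent subsequence with limit $\psi$ and writing $S=S_i+(S-S_i)$ with $S-S_i$ compact and $S_i$ self-adjoint and coercive, the compactness yields convergence of $\langle\psi_n,(S-S_i)\psi_n\rangle$, hence of $\langle\psi_n,S_i\psi_n\rangle$, while coercivity of $S_i$ keeps the latter bounded away from zero; this forces the weak limit $\psi$ to be nonzero and, in the limit, $\operatorname{Im}\langle\psi,S\psi\rangle=0$, contradicting that $\operatorname{Im}\langle\varphi,S\varphi\rangle\neq0$ for $\varphi\neq0$. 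This is the step in which the finer structure of $S$ is indispensable; in practice I would run it exactly as the ``$-1$ is not an accumulation point'' argument appearing in the proof of the range-identity theorem for the factorization method, and I expect the bookkeeping there (real versus complex pairings, weak limits) to be the most delicate point.

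For the sound-hard boundary condition the same scheme applies with the double-layer potential (or a combined ansatz) in place of the single-layer one and the hypersingular boundary operator $N$ in place of $S$: its coercive-modulo-compact splitting and the non-vanishing of its imaginary part are classical, the analogous representation of $\Phi^s(z,z)$ as a quadratic form in the corresponding density holds, and the data estimate needed is precisely the second bound of the preceding lemma, $\norm{\tfrac{\partial\Phi(\cdot,z)}{\partial\nu}}_{H^{-1/2}(\partial D)}^{2}\le c\,\abs{\ln d(z,D)}$, now invoked in its sharp direction --- as $z\to z^{*}$ the Neumann data of the point source converges to a multiple of $\delta_{z^{*}}\notin H^{-1/2}(\partial D)$ and its norm blows up.
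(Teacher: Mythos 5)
You are working against a citation here, not a proof: the paper defers this theorem to the reference \cite{fotouhi2005singular}, and the published arguments obtain the sharper asymptotics of Theorem 13 by analyzing the dominant reflected singularity of the point source near $\partial D$, not by an abstract coercivity argument. So your proposal is necessarily a different route, and it has a genuine gap precisely at the step you flag as delicate. Your identity $\Phi^s(z,z)=-\langle\varphi_z,S\varphi_z\rangle$ is correct, but as you note it is the \emph{bilinear} pairing, i.e.\ $-\int_{\partial D}(S\varphi_z)\,\varphi_z\,ds$ with no complex conjugation; this is a complex-symmetric form in $\varphi_z$, not a Hermitian one. The structural facts you then invoke---coercivity of $S_i$, compactness of $S-S_i$, and $\operatorname{Im}\langle\varphi,S\varphi\rangle\neq0$ for $\varphi\neq0$, exactly as used in the ``$-1$ is not an accumulation point'' step of the factorization range identity---are statements about the sesquilinear form $\int(S\varphi)\overline{\varphi}\,ds$. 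For the bilinear form they fail: writing $\psi=a+ib$ with real $a,b$, the $S_i$-part of your form equals $\langle a,S_ia\rangle-\langle b,S_ib\rangle+2i\langle a,S_ib\rangle$, which vanishes for nonzero $\psi$ whenever $a$ and $b$ are $S_i$-orthogonal with equal $S_i$-energy. Hence ``coercivity of $S_i$ keeps $\langle\psi_n,S_i\psi_n\rangle$ bounded away from zero'' is unjustified, you cannot conclude that the weak limit $\psi$ is nonzero, and the limiting statement $\operatorname{Im}\langle\psi,S\psi\rangle=0$ concerns the bilinear form, which Lemma 5 does not control (its nonvanishing imaginary part is proved via the far field of the single-layer potential and is genuinely a property of the conjugated pairing).

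Consequently, the divergence $\norm{\varphi_z}_{H^{-1/2}(\partial D)}\to\infty$---which you do establish correctly via the Fatou/Gagliardo argument for the trace and boundedness of $S$---does not, by this contradiction scheme, force divergence of the non-Hermitian quadratic form $\abs{\Phi^s(z,z)}$; boundedness of $\Phi^s(z_n,z_n)$ along a sequence is not excluded by anything you prove. The same mismatch recurs verbatim in your sound-hard paragraph with the hypersingular operator $N$. A repair must use more than the coercive-plus-compact splitting, e.g.\ the explicit local behaviour of $\varphi_z$ near $z^{*}$ or a reflection/image argument producing the leading term $c\ln d(z,D)$ of Theorem 13, which is how the cited reference proceeds. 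Two smaller points: the claim that passing to a combined single- and double-layer ansatz changes nothing is not right, since the quadratic-form identity is specific to the pure single-layer representation (harmless here because the standing assumption excludes interior Dirichlet eigenvalues); and the intermediate chain $W_1$-style bookkeeping aside, your first two steps (the identity and the blow-up of the boundary data) are sound and would be reusable in a corrected proof.
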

This theorem is proven in \cite{fotouhi2005singular}. We can more explicitly state the behavior of the scattered field becoming singular in the source point as follows.
\begin{theorem}
    For the scattering of point sources by a Dirichlet or Neumann scatterer we have the asympotics
    \begin{align}
        \Phi^s(z,z)&=\begin{cases}
            c\ln{d(z,D)}+\mathcal{O}(1),\quad m=2,\\
            \displaystyle\frac{c}{d(z,D)}+\mathcal{O}(\ln{d(z,D)}),\quad m=3,
        \end{cases}
    \end{align}
    for $z\to \partial D$ with some constant $c$.
\end{theorem}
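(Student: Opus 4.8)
The plan is to read off the leading singularity of $\Phi^{s}(z,z)$ from the explicitly solvable half-space model and then show that replacing the $C^{2}$ boundary by its tangent plane only perturbs the answer by a lower-order term. Fix $z$ with $d:=d(z,D)=d(z,\partial D)$ small, let $z^{*}\in\partial D$ be the (then unique) nearest boundary point, and let $\hat z:=2z^{*}-z$ be the mirror image of $z$ across the tangent plane $T_{z^{*}}\partial D$, so $|z-\hat z|=2d$. I treat the sound-soft (Dirichlet) case and note that the sound-hard (Neumann) case is entirely analogous, with the Dirichlet trace replaced by the normal trace, the half-space Dirichlet Green's function $\Phi(x,z)-\Phi(x,\hat z)$ replaced by the Neumann one $\Phi(x,z)+\Phi(x,\hat z)$ (which flips the sign of the eventual constant $c$), and the single-layer operator replaced by the hypersingular operator; the cancellations below occur verbatim.

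First I would record the model term. The scattered part of the half-space Dirichlet Green's function with pole at $z$ is $-\Phi(\cdot,\hat z)$, so the model value at the source is $-\Phi(z,\hat z)$, i.e. $-\Phi$ evaluated at distance $2d$. Inserting the small-argument expansion of $H_{0}^{(1)}$ when $m=2$ and of $e^{ikr}/r$ when $m=3$ gives $-\Phi(z,\hat z)=\tfrac{1}{2\pi}\ln d+\mathcal O(1)$ for $m=2$ and $-\Phi(z,\hat z)=-\tfrac{1}{8\pi d}+\mathcal O(1)$ for $m=3$. This already produces the asserted singular profile and identifies $c$; it remains to show $\Phi^{s}(z,z)$ agrees with it up to the claimed error, which refines the blow-up statement of the preceding theorem.

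Next I would set up the comparison rigorously. Write $\Phi^{s}(\cdot,z)=-\Phi(\cdot,\hat z)+r(\cdot,z)$, where $r(\cdot,z)$ is the radiating solution of the Helmholtz equation in $\mathbb R^{m}\setminus\overline D$ with Dirichlet data $g_{z}:=-\Phi(\cdot,z)|_{\partial D}+\Phi(\cdot,\hat z)|_{\partial D}$. Since $\partial D$ is $C^{2}$, for $y\in\partial D$ near $z^{*}$ the vector $y-z^{*}$ is tangent to $T_{z^{*}}\partial D$ up to a curvature-controlled quadratic error, hence $\big||y-z|^{2}-|y-\hat z|^{2}\big|=4\big|(y-z^{*})\cdot(z^{*}-z)\big|=\mathcal O\big(|y-z^{*}|^{2}d\big)$; a short estimate with the explicit fundamental solutions, together with the preceding lemma on the growth of $\|\Phi(\cdot,z)\|$, then bounds $g_{z}$ uniformly in $d$ in the relevant trace norm (indeed $\|g_{z}\|_{L^{2}(\partial D)}=o(1)$), and likewise its normal trace in the Neumann case. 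Because $z,\hat z\notin\partial D$, the data $g_{z}$ is smooth on $\partial D$, so by well-posedness of the exterior Dirichlet problem and elliptic regularity up to $\partial D$, $r(\cdot,z)$ is bounded on a fixed neighborhood of $z^{*}$; evaluating at $z$, which sits at distance $d$ from $\partial D$, yields $r(z,z)=\mathcal O(1)$ for $m=2$ and $r(z,z)=\mathcal O(\ln d)$ for $m=3$, the logarithmic loss in three dimensions coming from the $1/d$ growth of the exterior Green's function near $\partial D$ tested against $g_{z}$. Adding the model term and this remainder gives the theorem.

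The main obstacle is exactly this last uniform remainder estimate: the evaluation point $z$ is not confined to a compact subset of $\mathbb R^{m}\setminus\overline D$ but runs into $\partial D$, so the crude bound on the solution operator of the exterior problem degenerates. I would handle it by localizing — a cutoff separating $z^{*}$ from the rest of $\partial D$, whose contribution to $r(\cdot,z)$ is smooth and uniformly bounded — and, near $z^{*}$, a boundary-flattening change of variables reducing matters to a variable-coefficient Helmholtz problem on a half-space, whose solution operator composed with evaluation at height $d$ is controlled by boundary Schauder ($L^{p}$) estimates, the $C^{2}$ regularity of $\partial D$ entering through the modulus of continuity of the flattened coefficients. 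A technically lighter route to the same conclusion uses the single-layer representation: $\Phi^{s}(\cdot,z)$ is the single-layer potential of a density $\psi_{z}$ with $S\psi_{z}=-\Phi(\cdot,z)|_{\partial D}$, so $\Phi^{s}(z,z)=\int_{\partial D}\Phi(z,y)\psi_{z}(y)\,ds(y)$, and one analyzes the concentration of $\psi_{z}$ near $z^{*}$ by comparison with the half-space, where $S$ and $S^{-1}$ are explicit; the asymptotics of the resulting concentrating boundary integral again give $c\ln d+\mathcal O(1)$ for $m=2$ and $c/d+\mathcal O(\ln d)$ for $m=3$.
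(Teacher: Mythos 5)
The paper does not actually prove this statement: Theorem 13 (the blow-up property) is attributed to the literature, and the refined asymptotics here are stated without proof, being a quoted result (essentially Potthast's point-source asymptotics). So your proposal can only be judged on its own merits. The strategy you choose --- reflect $z$ across the tangent plane at the nearest boundary point $z^{*}$, take $-\Phi(\cdot,\hat z)$ (resp.\ $+\Phi(\cdot,\hat z)$) as the half-space model for the Dirichlet (resp.\ Neumann) scattered field, and control the remainder $r(\cdot,z)=\Phi^{s}(\cdot,z)+\Phi(\cdot,\hat z)$ as the exterior solution with the difference data $g_{z}$ --- is exactly the standard route in the literature, and your leading-term computation ($\tfrac{1}{2\pi}\ln d$ in $2$D, $-\tfrac{1}{8\pi d}$ in $3$D for the sound-soft case) and the algebraic identity $|y-z|^{2}-|y-\hat z|^{2}=4(y-z^{*})\cdot(z^{*}-z)=\mathcal O(d\,|y-z^{*}|^{2})$ are both correct.

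Two caveats. First, the uniform remainder estimate is not a technical afterthought but the entire content of the theorem beyond the (easy) leading-order guess, and you leave it as a plan rather than an executed argument. In $2$D the plan closes cleanly: a direct computation shows $g_{z}$ is uniformly Lipschitz on $\partial D$ (the tangential components $(y-z)\cdot t$ and $(y-\hat z)\cdot t$ coincide exactly, so the derivative of the log-difference is $\mathcal O\bigl(d\,s^{3}/(s^{2}+d^{2})^{2}\bigr)=\mathcal O(1)$), and the fixed solution operator of the exterior problem then gives $r=\mathcal O(1)$ up to the boundary. In $3$D, however, $\|g_{z}\|_{C^{0,\alpha}(\partial D)}$ genuinely degenerates like $d^{-\alpha}$, so the ``elliptic regularity up to $\partial D$'' step as stated does not give a uniform bound; you need the concentration analysis of the single-layer density near $z^{*}$ (your alternative route) to actually extract the $\mathcal O(\ln d)$ error, and that is where the curvature of $\partial D$ enters quantitatively. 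Second, the Neumann case is not ``verbatim'': the data there is $\partial_{\nu(y)}\bigl[\Phi(y,z)+\Phi(y,\hat z)\bigr]$, whose near-cancellation involves both the second fundamental form and the variation of $\nu$ away from $z^{*}$, and the resulting hypersingular integral equation requires its own estimate; asserting the cancellations ``occur verbatim'' skips a real piece of work. Neither caveat invalidates the approach, but as written the proof is a credible outline rather than a complete argument.
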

We now summarize the singular sources method by the following steps:
\begin{remark}[Method of Singular Sources]
The method of singular sources is a numerical scheme for shape reconstruction of $D$ by an approximate computation of the function $\Phi^s(z,z)$ from the scattering data $u_{\infty}(\Hat{x},d)$ for $\Hat{x},d\in \Omega$ using the backprojection formula.
\begin{enumerate}
    \item  With a priori knowledge $D\subset B$, where $B$ is a bounded outer domain, choose a domain approximation $G_z$ for each $z\in B$ such that $z\notin \overline{G_z}$ and the unknown inclusion $D\subset G_z$ is valid as far as possible.
    \item Choose value $\tau>0$ and then calculate the density $g_{\tau}(z,\cdot)$.
    \item Choose $\eta$ such that $\eta\norm{g_{\tau}(z,\cdot)}_{L^2(\Omega)}$ becomes sufficiently small. Then calculate $g_{\eta}$.
    \item Calculate the backprojection given in (5.7) and determine the boundary as the set of points where $|\Phi^s(z,z)|$ becomes large. We can define the backprojection operator, $Q$, via
    \begin{align*}
        (Qw)(x,z)&\coloneqq \frac{1}{\gamma}\int_{\Omega}\int_{\Omega}g_{\eta}(x,\Hat{x})g_{\tau}(z,d)w(-d,\Hat{x})\,ds(\Hat{x})\,ds(d).
    \end{align*}
    The backprojection we calculate then is given by $(Qu_{\infty})(z,z)\approx \Phi^s(z,z)$.
\end{enumerate}
\end{remark}
    \section{The Probe Method for Shape Reconstruction}
The probe method was first suggested by Ikehata \cite{ikehata1998reconstruction}. The idea is similar to the singular sources method in that it uses Green's representation formula to define the indicator functional that blows up as the point source approaches the boundary of the unknown obstacle. However, the indicator functional is developed using the Dirichlet-to-Neumann map of the measured data of some physical quantity on the boundary of the scatterer. \\
Let $B$ be a bounded domain in $\mathbb{R}^m$ ($m=2,3$) with Lipschitz boundary. Let $D$ be an open obstacle with Lipschitz boundary of $B$ that satisfies $\overline{D}\subset B$; $B\setminus \overline{D}$
is connected.\\
We denote by $\nu$ the unit outward normal relative to $B\setminus\overline{D}$. Let $k\geq 0$ denote the wave number. Assume $0$ is not a Dirichlet eigenvalue of $\Delta+k^2$ in $B$, nor is $0$ the Dirichlet eigenvalue of the following mixed problem
\begin{align*}
    \Delta u+k^2u&=0\text{ in }B\setminus\overline{D},\\
    \frac{\partial u}{\partial \nu}&=0\text{ on }\partial D,\\
    u&=0\text{ on }\partial B.
\end{align*}
Then, given $f\in H^{\frac{1}{2}}(\partial B)$, let $u\in H^1(B\setminus\overline{D})$ be the weak solution to the mixed elliptic problem
\begin{align*}
    \Delta u+k^2u&=0\text{ in }B\setminus\overline{D},\\
    \frac{\partial u}{\partial \nu}&=0\text{ on }\partial D,\\
    u&=f\text{ on }\partial B.
\end{align*}
Then define
\begin{align}
    \Lambda_Df&=\frac{\partial u}{\partial \nu}\Bigg|_{\partial B}.
\end{align}
Set $\Lambda_D=\Lambda_0$ to the case when $D=\emptyset$. Note that $\Lambda_D: H^{\frac{1}{2}}(\partial D)\to H^{-\frac{1}{2}}(\partial B)$ is called the Dirichlet-to-Neumann (DtN) map.\\
The inverse problem here can be phrased in terms of the DtN map: \begin{problem}
Recover the shape and location of $D$ from $\Lambda_D$ or its partial knowledge.
\end{problem}
Ikehata proposed considering the following indicator function for reconstruction
\begin{align}
    I(z,f)&\coloneqq \int_{\partial B}\overline{\paren{\Lambda_D-\Lambda_0}f}\cdot f\,ds
\end{align}
for specially constructed functions $f$. It can then be shown that $I(z,f)\to \infty$ if $z$ approaches the boundary of the unknown scatterer. The DtN map can be calculated from the far-field patterns $u^{\infty}(\Hat{x},d)$ for all $\Hat{x},d\in \Omega$.\\
We define two notions: a needle and geometric impact parameter (GIP). A continuous curve $c:[0,1]\to \overline{B}$ is called a \textit{needle} if $c(0),c(1)\in \partial B$ and $c(t)\in B$ for all $t\in (0,1)$. Define the geometric impact parameter (GIP) of $c$ with respect to $D$ by 
\begin{align*}
    t(c;D)&\coloneqq \sup\bra{0<t<1\,|\, \forall s\in (0,1), c(s)\in B\setminus\overline{D}},
\end{align*}
Note that if $t(c;D)=1$, then the curve $c([0,1])$ is outside $D$. If $t(c;D)<1$, then the GIP coincides with the hitting parameter of the curve $c$ with respect to the scatterer $D$. The boundary $\partial D$ can then be described as follows
\begin{align}
    \partial D&=\bra{c(t)\,:\, t=T(c;D),\, c\text{ is a needle with }T(c;D)<1}.
\end{align}
Now we proceed to formulate the probe method.
\subsection{Probe Method Formulation}
For any $z\in D$ we can choose a domain of approximation $G_z$ with $z\notin G_z$ and find solutions $v_{n,z}\in H^1(B)$ to the Helmholtz equation in $B$ that approximate the point source $\Phi(\cdot,z)$ on the approximation domain $G_z$, i.e., 
\begin{align*}
    \lim_{n\to\infty}\norm{\Phi(\cdot,z)-v_{n,z}}_{H^1(G_z)}=0.
\end{align*}
Here $v_{n,z}=v_n(z;c_t)$ is a sequence of $H^1(B)$ solutions to the Helmholtz equation with $c_t=\bra{c(s)\,|\,0< s\leq t}$. This is a consequence of the Runge approximation property. (See \cite{ikehata2005new}). Define $f_{n,z}\coloneqq v_{n,z}|_{\partial B}$. Then we can compute the functional (6.2) for any point $z\in B$ with $\overline{D}\subset G_z$, $z\notin G_z$ and calculate the limit
\begin{align}
    \Hat{I}(z)&\coloneqq \lim_{n\to \infty}I(f_{n,z})=\lim_{n\to\infty}\int_{\partial B}\bra{\overline{\paren{\Lambda_D-\Lambda_0}f_{n,z}}}f_{n,z}\,dS,
\end{align}
if it exists. This is the indicator functional. For $z\in D$ the limit $I(z)$ is not defined; however, the functional $I(f_{n,z})$ can be computed. The next theorem establishes the conditions for which the $I(z)$ exists.
\begin{theorem}
    If for some $z\in B$ we have $\overline{D}\subset G_z,z\notin G_z$, then the limit $\Hat{I}(z)$ exists. Furthermore, let $\bra{z_j}$ be a sequence of points for which $\overline{D}\subset G_{z_j}, z_j\notin G_{z_j}$ for $j\in\mathbb{N}$ and $z_j\to \partial D$ for $j\to \infty$. Then we have
    \begin{align*}
        \lim_{j\to\infty}\abs{\operatorname{Re}{\Hat{I}(z_j)}}&=\infty.
    \end{align*}
\end{theorem}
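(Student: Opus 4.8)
Here is the approach I would take.

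\emph{Strategy and Step 1 (an energy identity).} The plan is to turn the definition of $I(f_{n,z})$ into an energy identity involving only the probing fields near $\overline{D}$, and then pass to the limits $n\to\infty$ and $z\to\partial D$ inside it. Fix $z$ with $\overline{D}\subset G_z$ and $z\notin\overline{G_z}$, and set $f=f_{n,z}$. Since $0$ is not a Dirichlet eigenvalue of $\Delta+k^2$ in $B$, the Runge field $v_{n,z}$ is \emph{the} solution of $\Delta v+k^2v=0$ in $B$ with trace $f$, so $\Lambda_0 f=\partial v_{n,z}/\partial\nu$ on $\partial B$; let $u_{n,z}\in H^1(B\setminus\overline{D})$ solve the mixed problem with the same trace, so $\Lambda_D f=\partial u_{n,z}/\partial\nu$ on $\partial B$, and put $w_{n,z}\coloneqq u_{n,z}-v_{n,z}$ on $B\setminus\overline{D}$. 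Then $\Delta w_{n,z}+k^2 w_{n,z}=0$ in $B\setminus\overline{D}$, $w_{n,z}=0$ on $\partial B$, and $\partial w_{n,z}/\partial\nu=-\partial v_{n,z}/\partial\nu$ on $\partial D$ (from the homogeneous Neumann condition on $u_{n,z}$). Two applications of Green's first identity — on $B$ and on $B\setminus\overline{D}$ — together with the Hermitian symmetry of $\Lambda_D-\Lambda_0$ (which makes $I(f)$ real) yield
\[
\begin{aligned}
I(f_{n,z})={}&-\paren{\norm{\nabla w_{n,z}}_{L^2(B\setminus\overline{D})}^2-k^2\norm{w_{n,z}}_{L^2(B\setminus\overline{D})}^2}\\
&-\paren{\norm{\nabla v_{n,z}}_{L^2(D)}^2-k^2\norm{v_{n,z}}_{L^2(D)}^2},
\end{aligned}
\]
whose right-hand side depends on $v_{n,z}$ only through its restriction to a neighbourhood of $\overline{D}$.

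\emph{Step 2 (existence of $\hat I(z)$).} Because $\overline{D}$ is compact in the open set $G_z$, the Runge approximation property gives $v_{n,z}\to\Phi(\cdot,z)$ in $H^1(N)$ for some neighbourhood $N$ of $\overline{D}$; interior elliptic regularity for $\Delta+k^2$ upgrades this to $H^2$-convergence on a smaller neighbourhood, hence $v_{n,z}|_{\partial D}\to\Phi(\cdot,z)|_{\partial D}$ in $H^{1/2}(\partial D)$ and $\partial v_{n,z}/\partial\nu|_{\partial D}\to\partial\Phi(\cdot,z)/\partial\nu|_{\partial D}$ in $H^{-1/2}(\partial D)$. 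Since $0$ is not an eigenvalue of the mixed problem, its solution operator is bounded, so $w_{n,z}\to w_z$ in $H^1(B\setminus\overline{D})$, where $w_z$ is the reflected field attached to the genuine source $\Phi(\cdot,z)$. Passing to the limit in the identity of Step 1 shows $\hat I(z)=\lim_n I(f_{n,z})$ exists and equals the same expression with $(v_{n,z},w_{n,z})$ replaced by $(\Phi(\cdot,z),w_z)$; in particular $\hat I(z)$ is real here, so $\operatorname{Re}\hat I(z)=\hat I(z)$.

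\emph{Step 3 (blow-up).} Let $z_j\to z^{*}\in\partial D$. Choosing an interior cone $K\subset D$ with vertex $z^{*}$ (available since $D$ is open with Lipschitz boundary) and using $\abs{\nabla\Phi(x,z)}\ge c\abs{x-z}^{-1}-C$ near $z$, one gets a divergent \emph{lower} bound $\norm{\nabla\Phi(\cdot,z_j)}_{L^2(D)}^2\ge c'\int_{K\cap B_r(z^{*})}\abs{x-z_j}^{-2}\,dx\to\infty$ (complementing the logarithmic \emph{upper} bound $\norm{\Phi(\cdot,z)}_{H^1(D)}^2\le c\abs{\ln d(z,D)}$ proved above), while $\norm{\Phi(\cdot,z_j)}_{L^2(D)}$ stays bounded ($\ln^2$ being locally integrable in $\R^2$). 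Using $\norm{\nabla w_{z_j}}_{L^2(B\setminus\overline{D})}^2\ge 0$ in the identity of Step 2,
\[
\hat I(z_j)\le -\norm{\nabla\Phi(\cdot,z_j)}_{L^2(D)}^2+k^2\norm{w_{z_j}}_{L^2(B\setminus\overline{D})}^2+k^2\norm{\Phi(\cdot,z_j)}_{L^2(D)}^2,
\]
so it remains to bound $\norm{w_{z_j}}_{L^2(B\setminus\overline{D})}$ uniformly in $j$. A duality argument for the mixed problem (pairing $w_{z_j}$ against the solution of the mixed problem with an arbitrary $L^2$ right-hand side) reduces this to controlling $\partial\Phi(\cdot,z_j)/\partial\nu|_{\partial D}$ in a norm slightly weaker than $H^{-1/2}(\partial D)$, and this datum — essentially a nascent $\delta$-distribution concentrating at $z^{*}$ — stays bounded in $L^1(\partial D)$, hence in $H^{-1/2-\varepsilon}(\partial D)$. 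Therefore $\operatorname{Re}\hat I(z_j)=\hat I(z_j)\le -c'\abs{\ln\,\abs{z_j-z^{*}}}+C\to-\infty$, i.e.\ $\abs{\operatorname{Re}\hat I(z_j)}\to\infty$.

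\emph{Expected main obstacle.} Steps 1 and 2 are routine (integration by parts; Runge approximation plus well-posedness of the mixed problem). The crux is the end of Step 3: a priori \emph{both} the point-source energy $\norm{\nabla\Phi(\cdot,z_j)}_{L^2(D)}^2$ and the reflected-field energy $\norm{\nabla w_{z_j}}_{L^2(B\setminus\overline{D})}^2$ are of size $\abs{\ln d(z_j,D)}$, and one must rule out that they cancel in $\hat I(z_j)$. I would do this by showing that it is $\norm{w_{z_j}}_{L^2}$ — not its $H^1$-seminorm — that stays bounded, so the lower-order $k^2$-correction cannot absorb the divergent term; making the underlying comparison of $w_{z_j}$ with the explicit reflected point source at the image of $z_j$ across the tangent plane to $\partial D$ at $z^{*}$ rigorous, especially under merely Lipschitz regularity of $\partial D$, is the delicate point. (If one additionally assumes $k^2$ lies below the first Dirichlet eigenvalue of the mixed problem, the first bracket in the identity of Step 2 is automatically nonnegative, the $L^2$-bound on $w_{z_j}$ becomes unnecessary, and this difficulty disappears.)
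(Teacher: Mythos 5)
The paper itself never proves this theorem: it states it and defers to the probe-method literature (Ikehata; Erhard--Potthast), so there is no in-paper argument to compare against, and your proposal must be judged on its own. Its skeleton is the standard Ikehata-type argument, and Steps 1--2 are correct: the identity $I(f)=-\bigl(\norm{\nabla w}^2_{L^2(B\setminus\overline{D})}-k^2\norm{w}^2_{L^2(B\setminus\overline{D})}\bigr)-\bigl(\norm{\nabla v}^2_{L^2(D)}-k^2\norm{v}^2_{L^2(D)}\bigr)$ does follow from two applications of Green's identity together with the cross-term computation $\int_{B\setminus\overline{D}}(\nabla u\cdot\nabla\overline{w}-k^2u\overline{w})=0$ (using $w=0$ on $\partial B$ and $\partial_\nu u=0$ on $\partial D$), the Hermitian symmetry of $\Lambda_D-\Lambda_0$ makes $\hat{I}$ real, and Runge approximation plus interior regularity plus well-posedness of the mixed problem (in $H^{-1/2}(\partial D)$ Neumann data) gives existence of $\hat{I}(z)$ and its independence of the needle sequence. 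The sign is also right: with the Neumann obstacle the blow-up is to $-\infty$, consistent with $\abs{\operatorname{Re}\hat{I}(z_j)}\to\infty$.

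The genuine soft spot is exactly where you flag it, and as written it is a gap rather than a proof: the whole of Step 3 hinges on the uniform bound $\sup_j\norm{w_{z_j}}_{L^2(B\setminus\overline{D})}<\infty$, and your justification --- duality against the adjoint mixed problem plus the claim that $\partial_\nu\Phi(\cdot,z_j)|_{\partial D}$ stays bounded in $L^1(\partial D)$ --- is only valid for $C^{1,1}/C^2$ boundaries, where $\abs{(x-z_j)\cdot\nu(x)}\lesssim d_j+\abs{x-x_0}^2$ makes the kernel Poisson-like. Under the merely Lipschitz regularity assumed in this section, $(x-z_j)\cdot\nu(x)$ can be of order $\abs{x-x_0}$, and the $L^1(\partial D)$ norm can grow like $\abs{\ln d_j}$, which would feed back a term $k^2\norm{w_{z_j}}^2_{L^2}$ of the same or larger order than the divergent term $\norm{\nabla\Phi(\cdot,z_j)}^2_{L^2(D)}$ and destroy the conclusion. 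The bound itself is still true, but one must exploit cancellation rather than absolute values: in the duality pairing $\int_{\partial D}\partial_\nu\Phi(\cdot,z_j)\,\overline{\psi}\,ds$, split off $\psi(x_0)\int_{\partial D}\partial_\nu\Phi_0(\cdot,z_j)\,ds$ (Gauss' integral, uniformly bounded by the jump relations) and estimate the remainder using the H\"older continuity of the trace $\psi|_{\partial D}$ coming from $H^{3/2}$ regularity of the adjoint solution; equivalently, compare $w_{z_j}$ with an image point source. Alternatively, restrict the claim to the $C^2$ regularity assumed elsewhere in the paper, in which case your argument closes as sketched. Until one of these repairs is made explicit, the final inequality $\hat{I}(z_j)\le -c'\abs{\ln\abs{z_j-z^*}}+C$ is not established.
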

Numerical implementation of the probe method was first obtained by Erhard and Potthast \cite{erhard2006numerical} using ideas arising from point source approximations. These same techniques are additionally important to numerical implementation for the singular source method and no response test.
\subsection{Point Source Approximations}
Consider some point source $z$ and an approximation domain $G_z$. Recall the Herglotz wave operator $H: L^2(\Omega)\to L^2(\partial G_z)$ defined by
\begin{align}
    (Hg)(x)&\coloneqq v_g(x)\Big|_{\partial G_z}=\int_{\Omega} e^{ikx\cdot d}g(d)\,ds(d),\quad x\in \partial G_z.
\end{align}
This operator was an important factor of the factorization of the far field operator $F$. Additionally, $H$ is important for constructing approximations of the point source $\Phi(\cdot,z)$ on $G_z$. Recall that if the only solution to the  homogeneous interior Dirichlet problem for $G(z)$ is the trivial solution, then $H$ is injective and has dense range. Furthermore, it is easy to see $H$ is compact as it is an integral equation with continuous (in fact analytic) kernel. Thus, given some error tolerance $\epsilon$, we can find a density $g\in L^2(\Omega)$ such that
\begin{align}
    \norm{\Phi(\cdot,z)-Hg}_{L^2(\partial G_z)}&\leq \epsilon.
\end{align}
By the well-posedness of the interior Dirichlet problem, for any $M\Subset G_z$ we obtain a constant $c>0$ such that
\begin{align}
    \norm{\Phi(\cdot,z)-v_g}_{C^1(M)}&\leq c\epsilon.
\end{align}
Hence, we have constructed an entire solution to the Helmholtz equation which approximates the point source in the interior of $G_z$. So the density $g$ can be obtained as an approximate solution of the integral equation of the first kind
\begin{align}
    \Phi(\cdot,z)&=Hg\Big|_{\partial G_z}.
\end{align}
But because $H$ is compact, note that equation (6.7), called the \textit{point source equation}, is ill-posed in the sense of Hadamard. So in practice, we need to regularize the equation to compute the density $g$ which approximately solves (6.7). One such efficient scheme is Tikhonov regularization, which computes a stable approximate solution $g_{\alpha}$ by
\begin{align}
    g_{\alpha,z}&\coloneqq (\alpha I+H^{*}H)^{-1}H^{*}\Phi(\cdot,z),
\end{align}
with regularization parameter $\alpha>0$ and the $L^2$-adjoint $H^{*}$ of $H$. Finally, if both the domain $G_z$ and the source point $z$ is translated by some translation vector $t$, then the corresponding solution of the point source equation is obtained by a multiplicative factor 
\begin{align}
    g_{\alpha,(z+t)}(d)&=e^{-ikt\cdot d}g_{\alpha,z}(d),\quad d\in \Omega.
\end{align}
(See \cite{potthast2005sampling} for more details on how the translation property provides a quick scheme to calculate the densities $g_{\alpha,z}$ for a large number of translated approximation domains $G_z$). We can now summarize the scheme for the probe method.
\begin{remark}[Probe Method]
    The probe method is a numerical scheme for shape reconstruction of $D$ by an approximate computation of the Ikehata functional $\Hat{I}$ as defined in (6.3).
    \begin{enumerate}
        \item For each $z\in B$ choose an appropriate domain of approximation $G_z$.
        \item Compute the solution $g_{\alpha_z}$ for the point source equation (6.5).
        \item Compute the boundary value $f_z\coloneqq v_{g_{\alpha}}\Big|_{\partial \Omega}$ of $v$ on $\partial B$.
        \item Compute the Ikehata functional $I(f_z)$ given by (6.2) depending on $z$.
        \item Finally, find the unknown boundary $\partial D$ as the set of points where $|I(f_z)|$ becomes large.
    \end{enumerate}
\end{remark}
\subsection{Choosing the Approximation Domain}
For simplicity, we continue to only consider the two-dimensional case; in principle, we can generalize these results to the three-dimensional case. We firstly consider how to compute approximations for the point sources on the approximated domain $\partial G$ efficiently, since in detecting the obstacle boundary $\partial D$, the approximated domain $\partial G$ needs to be chosen for $z$ approaching $\partial D$ along all directions.\\
For the approximation of a point source $\Phi(\cdot,z)$ on some appropriately chosen approximation domains $G(z)$ by the Herglotz wave operator (6.5), the density $g_{\alpha,z}$ can be calculated as the approximate solution of the point source equation (6.8) via (6.9). Thus, for $z\in B$, it is necessary to solve and regularize an ill-posed equation. This can be made efficient using rigid motions, i.e., translation and rotations. \\
For a fixed reference domain $G_0$ with $0\notin G_0$ and smooth boundary $\partial G_0$, let $G$ be a domain generated from $G_0$ by rotation and translation. Then assume that 
\begin{align}
    G&=\mathbb{M}G_0+z_0,
\end{align}
with a unit orthogonal matrix $\mathbb{M}=(m_{ij})_{2\times 2}$ and translation vector $z_0$. Consider two integral equations of the first kind
\begin{align}
    (Hg_0)(x)&=\Phi(x,0), \quad x\in\partial G_0
\end{align}
and
\begin{align}
    (Hg)(x)&=\Phi(x,z_0),\quad x\in \partial G.
\end{align}
So $G$ emerges from $G_0$ by first rotating the domain $G_0$ and then translating it by the vector $z_0$. Thus, it is sufficient to solve the point source equation only once as stated in the following theorem by Potthast and Erhard.
\begin{theorem}
    Let $\epsilon>0,\,0\in B$ and $g_{\alpha,0}\in L^2(\Omega)$ be the solution to the regularized integral equation
    \begin{align}
        \paren{\alpha I+H^{*}H}g&=H^{*}\Phi(\cdot,0)
    \end{align}
    on $\partial G_0$ such that
    \begin{align}
        \norm{Hg_{\alpha,0}-\Phi(\cdot,0)}_{L^2(\partial G_0)}&<\epsilon.
    \end{align}
    Then for any $z\in B$ with corresponding approximation domain $G_z$ of the form (6.11), the density
    \begin{align}
        g_{\alpha,z}(d)&=e^{-ikx\cdot d}g_{\alpha,0}\paren{\mathbb{M}(z)^td}
    \end{align}
    defines a Herglotz wave function with 
    \begin{align}
        \norm{Hg_{\alpha,z}-\Phi(\cdot,z)}_{L^2(\partial G_z)}&<\epsilon.
    \end{align}
\end{theorem}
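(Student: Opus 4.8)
The plan is to reduce the statement for a general rotated-and-translated domain $G_z$ to the single pre-solved equation on the reference domain $G_0$ by exploiting how the Herglotz operator $H$ and the fundamental solution $\Phi$ transform under rigid motions. First I would set up notation: write $z = \mathbb{M} \cdot 0 + z = z$, so that $G_z = \mathbb{M}G_0 + z$ as in (6.11), and let $x \in \partial G_z$, which we parametrize as $x = \mathbb{M}y + z$ with $y \in \partial G_0$. The two facts I need are (i) translation covariance of the point source, $\Phi(x,z) = \Phi(x-z, 0)$, since the free-space fundamental solution $\Phi(x,z) = \tfrac{i}{4}H_0^{(1)}(k|x-z|)$ depends only on $|x-z|$; and (ii) the behavior of a plane wave under the change of variables, $e^{ik x\cdot d} = e^{ik(\mathbb{M}y+z)\cdot d} = e^{ikz\cdot d}\,e^{ik y\cdot(\mathbb{M}^t d)}$, using that $\mathbb{M}$ is orthogonal so $(\mathbb{M}y)\cdot d = y\cdot(\mathbb{M}^t d)$.

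The key computation is then to show that the proposed density $g_{\alpha,z}(d) = e^{-ikz\cdot d} g_{\alpha,0}(\mathbb{M}^t d)$ satisfies $(Hg_{\alpha,z})(x) = (Hg_{\alpha,0})(\mathbb{M}^t(x-z))$ for $x \in \partial G_z$. Substituting the definition (6.5) of $H$ and changing the integration variable from $d$ to $d' = \mathbb{M}^t d$ (which preserves $\Omega$ and its surface measure, again by orthogonality of $\mathbb{M}$), the factor $e^{-ikz\cdot d}$ in $g_{\alpha,z}$ cancels the factor $e^{ikz\cdot d}$ coming from $e^{ikx\cdot d} = e^{ikz\cdot d} e^{ik(x-z)\cdot d}$, and $e^{ik(x-z)\cdot d} = e^{ik\,\mathbb{M}^t(x-z)\cdot d'}$; hence the integral collapses exactly to $(Hg_{\alpha,0})(\mathbb{M}^t(x-z))$, where we note $\mathbb{M}^t(x-z) \in \partial G_0$ since $x - z \in \mathbb{M}G_0$. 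I would record this as a short lemma-style display before the main estimate. (There is a minor typo to absorb: the statement writes $e^{-ikx\cdot d}$ in (6.18) where $e^{-ikz\cdot d}$ is meant, and I would silently use the corrected form.)

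Finally I would combine this identity with translation covariance of $\Phi$. For $x \in \partial G_z$ set $w = \mathbb{M}^t(x-z) \in \partial G_0$; then
\[
(Hg_{\alpha,z})(x) - \Phi(x,z) = (Hg_{\alpha,0})(w) - \Phi(x-z,0) = (Hg_{\alpha,0})(w) - \Phi(\mathbb{M}w,0) = (Hg_{\alpha,0})(w) - \Phi(w,0),
\]
where the last equality uses $|\mathbb{M}w| = |w|$. Taking $L^2(\partial G_z)$ norms and changing variables $x \mapsto w$ — a rigid motion, hence an isometry of the relevant surface measures — gives $\|Hg_{\alpha,z} - \Phi(\cdot,z)\|_{L^2(\partial G_z)} = \|Hg_{\alpha,0} - \Phi(\cdot,0)\|_{L^2(\partial G_0)} < \epsilon$ by hypothesis (6.16), which is (6.18). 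The main obstacle, such as it is, is purely bookkeeping: tracking the orthogonal matrix through the two separate changes of variables (one on $\Omega$ in the $d$-integral, one on $\partial G_0 \leftrightarrow \partial G_z$ in the norm) and making sure the phase factors cancel in the right order; there is no analytic difficulty, since the regularization parameter $\alpha$ and the operator $H$ never need to be touched beyond the single equation (6.15) on $G_0$. I would also remark, following (6.10), that this is exactly the observation that makes the probe and singular-source schemes computationally feasible, since one solves the ill-posed point source equation only once.
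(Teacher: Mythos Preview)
Your argument is correct: the rigid-motion covariance of both the Herglotz operator and the fundamental solution, together with the change of variables $d\mapsto\mathbb{M}^t d$ on $\Omega$ and $x\mapsto\mathbb{M}^t(x-z)$ on the boundary, yields the identity of $L^2$ norms exactly as you describe, and you correctly flag the typo $e^{-ikx\cdot d}$ for $e^{-ikz\cdot d}$ in the displayed density. The paper does not actually supply a proof of this theorem --- it only remarks that the proof ``is fairly straightforward'' and cites Erhard--Potthast --- so your write-up is in fact more complete than the paper's treatment, and the approach you take is the natural one implied by that remark.
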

The proof of this theorem is fairly straightforward, see \cite{erhard2006numerical}. This theorem reduces the complexity of computing the densities $g_{\alpha,z}$ from solving a complex linear system to pointwise vector multiplication.

In the probe method, we take the near-field data, i.e., the DtN map, simulated by solving the forward mixed problem as the input data. We can take the test domain in $\mathbb{R}^2$ to be
\begin{align*}
    B&=\bra{(x,y)\,:\, x^2+y^2<1}\subset \mathbb{R}^2
\end{align*}
satisfying $\overline{D}\subset B$, and for every point $c(0)\in \partial B$, the straight line needle $c$ connecting $c(0)$ and $0\in D$ has a joint point with $\partial D$. In the algorithm for the probe method, for each $z\in B$ we want to choose an appropriate domain of approximation $G(c,t)$ with $C^2$ regular boundary for a given needle $c$ and point $c(t)\notin \overline{D}$ such that $\bra{c(s)\,:\,0\leq s\leq t}\in B\setminus \overline{G(c,t)}$. But it is enough to construct $G(c_0,t)$ for a special needle $c_0(0)=(0,1)$ since $G(c,t)$ for other needles can be obtained by rotations and a translation.
\begin{remark}[Constructing the Runge Approximation Function]
    We can now specify the scheme for determining the Runge approximation functions $f_n$ on $\partial B$ by constructing the minimum norm solution $g_{\frac{1}{n}}(c(t),d)$ to
    \begin{align*}
        (Hg)(x)&=\Phi(x,c(t)),\quad x\in \partial G(c,t),
    \end{align*}
    with discrepancy $\frac{1}{n}$. For any fixed $c(t)\notin \overline{G(c,t)}$, $g_{\frac{1}{n}}(c(t),d)\coloneqq \phi_0(d)$ can be solved via the equations
    \begin{align*}
    \begin{dcases*}
    \norm{(H\phi_0)(\cdot)-\Phi(\cdot,c(t))}_{L^2(\partial G(c,t))}=\frac{1}{n}\\
    \alpha \phi_0(d)+(H^{*}H\phi_0)(d)=(H^{*}\Phi)(d).
    \end{dcases*}
    \end{align*}
Hence, we can compute $f_n$ explicitly as
\begin{align}
    f_n(x,c(t))&\coloneqq (Hg_{\frac{1}{n}})(x)=\int_{\Omega}e^{ikx\cdot d}g_{\frac{1}{n}}(c(t),d)\, ds(d),\quad x\in \partial B.
\end{align}
We can use this same minimum norm solution as the density for the backprojection in the method of singular sources. (See \cite{liu2011some}).                                      
\end{remark}
\section{Enclosure Method for Polygonal Domains}
All of the sampling and probe methods share the advantage that no knowledge of the boundary condition of the unknown scatterer is needed. Moreover, these methods are still valid for the limited aperture case, i.e., the case where the far field data is not known on the full circle/sphere $\Omega$ but only on an open subset $\Lambda\subset \Omega$. The chief disadvantage of all the sampling and probe methods we covered so far lies in the fact that they all require the knowledge of the far field pattern for a large number of incident waves. However, in practice, such large data is not usually available. The current challenge facing these algorithms is to reduce the amount of data needed for reliable shape reconstruction. Fortunately, there has been some development on reconstruction algorithms using very limited data. One such method is the \textit{enclosure method} by Ikehata (\cite{ikehata1999enclosing} and \cite{ikehata2001enclosure}). The enclosure method is a direct imaging method that enables one to find the support of complex polygons from the knowledge of only one measured field. \\
Ikehata's enclosure method solves the problem of reconstructing a two-dimensional obstacle from Cauchy data on a circle surrounding the obstacle given a total wave field generated by a single incident plane wave with fixed wave number $k$. The obstacle $D$ is a polygonal obstacle, i.e., $D$ takes the form $D_1\cup D_2 \cup \cdots D_m$ with $1\leq m\leq \infty$ where each $D_j$ is open and a polygon; $\overline{D}_j\cap \overline{D}_{j^{'}}=\emptyset$ if $j\neq j^{'}$.\\
The total wave field $u$ is outside the obstacle $D$ and satisfies
\begin{align}
    u(x;d,k)&=e^{ikd\cdot x}+w(x),
\end{align}
with $k>0,\, d\in \Omega$, and satisfies
\begin{align}
    \begin{dcases*}
        \Delta u+k^2u=0\quad \text{in }\mathbb{R}^2\setminus\overline{D},\\
        \frac{\partial u}{\partial \nu}=0\quad \text{on }\partial D\\
        \lim_{r\to\infty}\paren{\frac{\partial w}{\partial r}-ikw}=0,\, r=|x|.
    \end{dcases*}
\end{align}
(7.2) is again the mixed time-harmonic acoustic scattering problem.\\
Let $B_R$ be an open disc with radius $R>0$ centered at a fixed point satisfying $\overline{D}\subset B_R$. Assume $B_R$ is known and our data are $u$ and $\partial_{\nu} u$ on $\partial B_R$. Let $\omega=\begin{pmatrix}
           \omega_{1} \\
           \omega_{2} 
         \end{pmatrix}, \omega^{\perp}=\begin{pmatrix}
           \omega_{2} \\
           -\omega_{1} 
         \end{pmatrix}$ be two unit vectors perpendicular to each other. Set $z=\tau\omega+i\sqrt{\tau^2+k^2}\omega^{\perp}$ with $\tau>0$ a parameter. It is easy to check that $z\cdot z=|z|^2=-k^2$. Then we consider a special complex exponential solution of the Helmholtz equation $(\Delta+k^2)v=0$ in $\mathbb{R}^2$, namely
         \begin{align}
             v_{\tau}(x,\omega)&=\exp{\paren{x\cdot z}}=\exp{\paren{x\cdot (\tau\omega+i\sqrt{\tau^2+k^2}\omega^{\perp}}},\quad x\in \mathbb{R}^2.
         \end{align}
This $v_{\tau}$ not only solves the Helmholtz equation for the ambient space but divides the whole space into two parts: if $x\cdot \omega>t$, then $e^{-\tau t}|v|\to \infty$ as $\tau\to \infty$; if $x\cdot \omega<t$, then $e^{-\tau t}|v|\to 0$ as $\tau\to \infty$. The enclosure method virtually checks whether given $t$ the half plane $x\cdot \omega> t$ touches the unknown obstacle. The main benefit of the enclosure method is that we only need a single incident plane wave to implement it.\\

We now define the \textit{support function} of $D$ to be given by $h_D(\omega)\coloneqq \sup_{x\in D}x\cdot \omega$ and say $\omega$ is \textit{regular} with respect to $D$ if the set $\partial D\cap \bra{x\in \mathbb{R}^2\,:\, x\cdot \omega=h_D(\omega)}$ consists of only one point. Note that from the precise knowledge of $h_D(\omega)$ for all $\omega\in \Omega\coloneqq S^1$ one can obtain the convex hull of the obstacle $D$. The convex hull of $D$ is in fact given by the set $\bigcap_{\omega\in \Omega}\bra{x\in \mathbb{R}^2\,:\,x\cdot\omega<h_D(\omega)}$. The inverse problem can be reformulated as follows
\begin{problem}
Determine the support $h_D(\omega)$ approximately for a given $\omega$ from given measurement data.  
\end{problem} 
From Green's theorem we define the functional to be
\begin{align}
    I(\tau;\omega,d,k)&\coloneqq \int_{\partial B_R} \paren{\frac{\partial u}{\partial\nu}v_{\tau}-\frac{\partial v_{\tau}}{\partial\nu}u}\, dS.
\end{align}
Ikehata showed the following important theorem holds that states that at the corners of the convex polygonal scatterer $D$ this indicator functional becomes unbounded \cite{ikehata1999enclosing}.
\begin{theorem}
    Assume $\omega$ is regular. Then the following limit exists and is valid
    \begin{align}
        \lim_{\tau\to\infty}\frac{1}{\tau}\log{\abs{ \int_{\partial B_R} \paren{\frac{\partial u}{\partial\nu}v_{\tau}-\frac{\partial v_{\tau}}{\partial\nu}u}\, dS}}&=h_{D}\omega.
    \end{align}
    Moreover, we have that
    \begin{enumerate}
        \item if $t\geq h_D(\omega)$, then $\displaystyle\lim_{\tau\to\infty}e^{-\tau t}\abs{I(\tau;\omega,d,k)}=0$.
        \item if $t<h_D(\omega)$, then $\displaystyle\lim_{\tau\to\infty}e^{-\tau t}|I(\tau;\omega,d,k)|=\infty$.
    \end{enumerate}
\end{theorem}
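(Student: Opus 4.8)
The plan is to relocate the boundary integral defining $I(\tau;\omega,d,k)$ from the observation circle $\partial B_R$ onto $\partial D$ by Green's identity, and then to read off its exponential rate from a Laplace-type analysis concentrated at the unique point of $\partial D$ where $x\cdot\omega$ attains its maximum $h_D(\omega)$. First I would apply Green's second identity on the Lipschitz domain $B_R\setminus\overline D$: both $u$ and $v_\tau$ solve $(\Delta+k^2)(\cdot)=0$ there (for $v_\tau$ this is precisely the relation $z\cdot z=-k^2$), so the area term vanishes and
\begin{align*}
I(\tau;\omega,d,k)=\int_{\partial B_R}\Big(\tfrac{\partial u}{\partial\nu}v_\tau-\tfrac{\partial v_\tau}{\partial\nu}u\Big)dS=\int_{\partial D}\Big(\tfrac{\partial u}{\partial\nu}v_\tau-\tfrac{\partial v_\tau}{\partial\nu}u\Big)dS=-\int_{\partial D}u\,\tfrac{\partial v_\tau}{\partial\nu}\,dS,
\end{align*}
the last step using the sound-hard condition $\partial u/\partial\nu=0$ on $\partial D$. (Equivalently, one may keep only the scattered field $w$, since the incident plane wave is entire in $B_R$ and contributes nothing.)

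For the upper bound I would use that on $\overline D$ one has $|v_\tau(x)|=e^{\tau\,x\cdot\omega}\le e^{\tau h_D(\omega)}$ and $|\partial_\nu v_\tau(x)|=|\nu\cdot z|\,e^{\tau x\cdot\omega}=O(\tau)\,e^{\tau h_D(\omega)}$, so that $|I(\tau)|\le C\tau\,e^{\tau h_D(\omega)}\|u\|_{L^2(\partial D)}$; this already yields $\limsup_{\tau\to\infty}\tfrac1\tau\log|I(\tau)|\le h_D(\omega)$ and assertion (1) for all $t>h_D(\omega)$. Next, since $\omega$ is regular the set $\partial D\cap\{x:x\cdot\omega=h_D(\omega)\}$ is a single point $x_0$, and $x_0$ must be a \emph{vertex} of the polygon (a whole edge attaining the maximum would contradict regularity). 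For a fixed small $\delta>0$ the set $\overline D\setminus B_\delta(x_0)$ is compact with $x\cdot\omega<h_D(\omega)$ on it, hence $x\cdot\omega\le h_D(\omega)-\eta_\delta$ there with $\eta_\delta>0$; the contribution of $\partial D\setminus B_\delta(x_0)$ to $-\int_{\partial D}u\,\partial_\nu v_\tau\,dS$ is $O\big(\tau e^{\tau(h_D(\omega)-\eta_\delta)}\big)$, exponentially smaller than the target rate. So $I(\tau)$ equals, up to such an error, the integral over the two edges of $\partial D$ issuing from $x_0$, truncated to $B_\delta(x_0)$.

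On each such edge write $x=x_0+s\,e_j$, $0\le s\le\delta$, $j=1,2$, with constant outward unit normal $\nu^{(j)}$; regularity forces $e_j\cdot\omega<0$, hence $\operatorname{Re}(e_j\cdot z)=\tau(e_j\cdot\omega)\to-\infty$. Inserting the local expansion of $u$ at the corner $x_0$ (leading value $u(x_0)$, then the regular and singular corrections dictated by the Neumann problem at a polygonal vertex) together with the incomplete Laplace integrals $\int_0^\delta s^m e^{s\lambda}\,ds=\Gamma(m+1)(-\lambda)^{-(m+1)}\big(1+O(e^{\delta\operatorname{Re}\lambda})\big)$, and using $|e^{x_0\cdot z}|=e^{\tau h_D(\omega)}$, I would arrive at
\begin{align*}
I(\tau)=u(x_0)\,e^{x_0\cdot z}\Big(\tfrac{\nu^{(1)}\cdot z}{e_1\cdot z}+\tfrac{\nu^{(2)}\cdot z}{e_2\cdot z}\Big)+(\text{lower-order terms}).
\end{align*}
Since $z/\tau\to\omega+i\omega^\perp$ and $\omega^\perp$ is $\omega$ rotated by $-90^\circ$, a direct computation gives $(\nu^{(1)}\cdot z)/(e_1\cdot z)\to i$ and $(\nu^{(2)}\cdot z)/(e_2\cdot z)\to-i$, the opposite signs reflecting that the two outward normals at a vertex turn in opposite senses relative to the edge directions; thus the bracket tends to $0$ and the displayed term is $o(e^{\tau h_D(\omega)})$. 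Combining it with the next corrections (the remainders in these ratios and the first correction to $u(x_0)$ along each edge) should give $|I(\tau)|=\big(c_0+o(1)\big)\,\tau^{-N}e^{\tau h_D(\omega)}$ for some fixed $N\ge1$ determined by the corner. Granting $c_0\neq0$, this lower bound together with the upper bound of the second paragraph gives $\lim_{\tau\to\infty}\tfrac1\tau\log|I(\tau)|=h_D(\omega)$, whence $e^{-\tau t}|I(\tau)|\to0$ for every $t\ge h_D(\omega)$ and $\to\infty$ for every $t<h_D(\omega)$, which are exactly assertions (1) and (2).

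The main obstacle is the non-degeneracy $c_0\neq0$: the literal leading term vanishes because of the cancellation above, so one must show that the surviving coefficient --- built from $u(x_0)$, the geometry of the two edges at $x_0$, and the leading Neumann corner coefficient of $u$ --- cannot vanish. Here I would use that $u$ is not identically zero (it contains the incident plane wave, and $u\equiv0$ in $\mathbb{R}^2\setminus\overline D$ would contradict the Sommerfeld condition on $w$) together with unique continuation for the Helmholtz equation in the connected exterior, which rules out an infinitely degenerate local behaviour of $u$ at $x_0$; it then remains to check that whichever term of the corner expansion of $u$ is the first nonvanishing one is transferred by the integral to a nonzero contribution at the matching order in $\tau$. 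A further technical point, when the polygon has infinitely many components, is to guarantee in the localisation step that $h_D(\omega)$ is genuinely attained at a single vertex rather than only approached.
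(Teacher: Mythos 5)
Your overall strategy is the same one the paper (following Ikehata) uses: reduce to a boundary integral weighted by $e^{x\cdot z}$, localize at the unique point $x_0\in\partial D$ with $x_0\cdot\omega=h_D(\omega)$ (a vertex, by regularity), do a Laplace-type expansion there using the corner expansion $u(r,\theta)=\alpha_1J_0(kr)+\sum_{n\ge2}\alpha_nJ_{\lambda_n}(kr)\cos(\lambda_n\theta)$, observe that the leading ``smooth'' contribution cancels, and conclude that $e^{-\tau h_D(\omega)}|I(\tau)|$ decays only algebraically, which yields both the value of the log-limit and the borderline case $t=h_D(\omega)$ in assertion (1). (Two cosmetic inaccuracies: the surviving rate is $\sigma^{-\lambda_n}$ with $\lambda_n=(n-1)\pi/\Theta$, typically a fractional power in $(1/2,1)$, not $\tau^{-N}$ with integer $N\ge1$; and your crude bound alone never settles $t=h_D(\omega)$, as you in effect acknowledge.)

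The genuine gap is exactly the point you flag but do not resolve: the non-vanishing of the surviving coefficient. In the paper's expansion the coefficient at order $\sigma^{-\lambda_n}$ is the \emph{product} $\alpha_nK_n$, where $K_n$ is a purely geometric factor that can itself vanish --- this happens precisely when $\lambda_n=(n-1)\pi/\Theta$ is an integer, i.e.\ when the opening angle satisfies $\Theta/\pi\in\mathbb{Q}$. Your proposed fix (unique continuation rules out ``infinitely degenerate local behaviour'' of $u$ at $x_0$) only shows that some corner coefficient $\alpha_n$, $n\ge2$, is nonzero; it does not exclude that every such nonzero $\alpha_n$ is multiplied by a vanishing $K_n$. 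This is why the paper splits into two cases: if $\Theta/\pi$ is irrational, all $K_n\neq0$, and vanishing of all $\alpha_n$, $n\ge2$, forces $u=\alpha_1J_0(k|\,\cdot-x_0|)$ near the corner, which unique continuation propagates to the whole exterior and contradicts the far-field behaviour --- this is the part your sketch captures. But if $\Theta/\pi$ is rational, the assumption $\alpha_nK_n=0$ for all $n\ge2$ only tells you that $u$ is, near the corner, a combination of integer-order modes with extra homogeneous Neumann data on the two rays $\theta=\pi$ and $\theta=\Theta-\pi$; one then needs the reflection argument to extend this to all $r>0$, conclude $u$ is constant, and contradict the fact that the incident direction $d$ cannot be parallel to two linearly independent vectors. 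Without this rational-angle case, assertion (2) and the existence/value of the limit in (7.5) remain unproven, so the proposal as it stands is incomplete at its decisive step.
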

\begin{proof}[Sketch of the proof]
    This proof was developed by Ikehata in \cite{ikehata2010probe}, which we outline here. We begin by introducing a new parameter $\sigma$ instead of $\tau$ defined by $\sigma=\sqrt{\tau^2+k^2}+\tau$. Then we obtain, as $\sigma\to \infty$ the complete asymptotic expansion
    \begin{align}
        \int_{\partial B_R}\paren{\frac{\partial u}{\partial\nu}v(x;z)-\frac{\partial v}{\partial \nu}(x;z)u}\,dS(x)e^{-i\sqrt{\tau^2+k^2}x_0\cdot\omega^{\perp}-\tau h_D(\omega)}&
        \thicksim -i\sum_{n=2}^{\infty}\frac{e^{i\frac{\pi}{2}\lambda_n}k^{\lambda_n}\alpha_n K_n}{\sigma^{\lambda_n}}.
    \end{align}
    Here the $\lambda_n$ refers to the singularity of $u$ at a corner and is given by $\lambda_n=(n-1)\pi/\Theta$, where $\Theta$ is the outside angle of $D$ at $x_0\in \partial D\cap \bra{x\in\mathbb{R}^2\,:\,x\cdot \omega=h_D(\omega)}$. So $\pi<\Theta<2\pi$. $K_n$ are constants depending on $\lambda_n$, $\omega$, and the shape of $D$ around $x_0$. Each $\alpha_n$ for $n\geq 2$ refers to the coefficients of the convergent series expansion of $u$ with polar coefficients at a corner:
    \begin{align*}
        u(r,\theta)&=\alpha_1J_0(kr)+\sum_{n=2}^{\infty}\alpha_n J_{\lambda_n}(kr)\cos{\lambda_n\theta},\quad 0<r\ll1,0<\theta<\Theta.
    \end{align*}
    See \cite{ikehata1999enclosing} for a detailed derivation of this expansion. In other words, let $I_{\omega}(\tau, h_D(\omega))$ denote the left-hand side of (7.6). Then (7.6) tells us that $|I_{\omega}(\tau,h_D(\omega))|$ decays algebraically as $\sigma\to\infty$ and therefore as $\tau\to\infty$. So the statements of Theorem 22 follow from (7.6) and the fact that there exists $ n\geq 2\,:\, \alpha_n K_n$. This latter result can be argued by contradiction. Namely, assume the assertion is not true and that for every $ n\geq 2,\, \alpha_n K_n=0$.\\
    Consider the case when $\Theta/\pi$ is irrational. Then assuming $\forall n\geq 2,\, \alpha_n K_n=0$, we have $\alpha_n=0$ and as such $u(r,\theta)=\alpha_1J_0(kr)$ near a corner. Now this right-hand side is an entire solution to the Helmholtz equation. by unique continuation property we obtain $u(x)=\alpha_1J_0(k\abs{x-x_0}$ in $\mathbb{R}^2\setminus\overline{D}$. But the asymptotic behavior of the right-hand and left-hand sides are different, yielding a contradiction.\\
    Instead consider the case where $\Theta/\pi$ is rational. Then we know that for each $n\geq 2$ with $K_n=0$ the $\lambda_n$ becomes an integer. From the assumption that $\forall n\geq 2,\, \alpha_n K_n=0$, one knows if $n$ satisfies $K_n\neq 0$, then $C_n=0$. Thus we obtained the expansion
    \begin{align*}
        u(r,\theta)&=\sum_{n_j}C_{n_j}J_{\lambda_{n_j}}(kr)\cos{\lambda_{n_j}\theta},
    \end{align*}
    where $n_j\geq 2$ satisfy $K_{n_j}=0$. Since $\lambda_{n_j}$ is an integer and $\lambda_{n_j}\Theta=(n_j-1)\pi$, from the right-hand side of the expansion we obtain that for all $r$ with $0<r\ll 1$ $\displaystyle\frac{\partial u}{\partial \theta(r,\pi)}=\displaystyle\frac{\partial u}{\partial \theta(r,\Theta-\pi)}=0$. Then by a reflection argument \cite{ikehata1999enclosing} one can conclude that this is true for all $r>0$. So $u$ has to be a constant function and with the asymptotic behavior of $\nabla u$ one can conclude that the incident direction $d$ has to be parallel to two linearly independent vectors directed along the lines $\theta=\pi$ and $\theta=\Theta-\pi$, a contradiction. Hence $\exists n\geq 2\,:\, \alpha_n K_n$, and the result follows from the fact that $e^{-\tau t}I(\tau;\omega,d,k)=e^{\tau(h_D(\omega)-t)}I_{\omega}(\tau,h_D(\omega))$ and the algebraic decay shown in (7.6).
\end{proof}
\noindent In summary, we are analyzing the behavior of the field $u$ at the corners and edges of the polygonal domain $D$, i.e., where $u$ is singular. We can determine the minimal positive half plane which contains $D$ in the interior using Theorem 22.
\begin{remark}[The Enclosure Method]
    The enclosure method provides a scheme for the reconstruction of the convex hull of the set of edges of some two-dimensional polygonal scatterer from Cauchy data on an outer disc of radius $R>0$ from the knowledge of only one measured field.
    \begin{enumerate}
        \item Choose a family of half planes $H_j=H[\omega_j,t_j]\coloneqq\bra{x\in\mathbb{R}^2\,: x\cdot w_j<t_j}, j\in \mathcal{J}$, with $\mathcal{J}$ some index set and constant $c_0$.
        \item Choose some regularization parameter $\tau>0$ and define
        \begin{align*}
            v_j(x)&\coloneqq v(\omega_j,\tau,t_j)(x),
        \end{align*}
    as defined in (7.3).
    \item Use the asymptotic behavior of 
    \begin{align*}
        \lim_{\tau\to\infty}\frac{\log{\abs{I(\tau;\omega,d,k)}}}{\tau}&=h_D(\omega),
    \end{align*}
    where $h_D(\omega)=\sup_{x\in D}x\cdot\omega$ to compute some minimal positive half plane $H[\omega, h_D(\omega)]$ which contains $D$ in the interior. Alternatively, for each test half plane $H_j,j\in\mathcal{J}$, determine the indicator functional $I(\tau;\omega_j,d,k)=I_j$ given in (7.4). If $I_j<c_0$, then we call $H_j$ a positive half plane.
    \item The convex hull of the domain $D$ is found to be the intersection of all these half planes for a number of directions $\omega$, i.e., compute
    \begin{align*}
        D_{enc}&\coloneqq \bigcap_{H_j,\text{ positive },j\in\mathcal{J}}H_j,
    \end{align*}
    where each $H_j$ is a positive half plane. $D_{enc}$ will be an approximation to the $\text{conv}(D)$.
    \end{enumerate}
\end{remark}
\section{No Response Test: Another One-Wave Method}
Like the enclosure method, the no-response test introduced by Luke and Potthast \cite{luke2003no} is a related method that reconstructs the shape of a scatterer from the knowledge of the scattered wave or far-field pattern for scattering of only a single time-harmonic incident wave. \\
The inverse problem we consider is to locate and construct the scatterer $D$ given one incident wave $u^i$
and the far field data restricted to the aperture $u_{\infty}|_{\Lambda}$, where $\Lambda\subset \Omega$, i.e., $\Lambda$ is a proper subset of the unit circle. The no response test is a reconstruction algorithm that uses only one incident wave and does not depend on \textit{a priori} information on the scatterer. An additional benefit of the no response test is that it is well suited for limited aperture data. \\
The main idea behind the no response test is to sample by construction special incident fields which are small on some test domain and large outside and then estimate the response to these waves. If the maximum of the sampled responses is small, this indicates that the unknown scatterer is a subset of the test domain. \\
We will apply this method to the sound-soft time-harmonic acoustic scattering problem (0.1)-(0.3). Recall Green's representation formula for the far-field pattern (5.4). Multiply (5.4) by the density $g\in L^2(\Lambda)$ and integrate over $\Lambda\subset\Omega$ to obtain
\begin{align}
    I(g)&\coloneqq \int_{\Lambda}u_{\infty}(-\Hat{x})g(\Hat{x})\,ds(\Hat{x})\\ \nonumber
    &=\frac{1}{\gamma}\int_{\partial D}\int_{\Lambda}\paren{u^s(y)\frac{\partial e^{iky\cdot d}g(d)}{\partial\nu(y)}-\frac{\partial u^s(y)}{\partial\nu(y)}e^{iky\cdot d}g(d)}\,ds(d)ds(y)\\\nonumber
    &=\frac{1}{\gamma}\int_{\partial D}\paren{u^s\frac{\partial v_g}{\partial\nu}-\frac{\partial u^s}{\partial\nu}v_g}\,ds, 
\end{align}
where $v_g$ is again the Herglotz wavefunction. Note we used a reciprocity principle of the far field pattern for the scattering procedure \cite{liu2011some}. Let $v_g$ and its derivatives be small on some fixed test domain $G_0$. Then the above functional $I(g)$ should be small if $D\subset G_0$, while it will be arbitrarily large if $\overline{G}\subset \mathbb{R}^2\setminus\overline{D}$.\\
Let $G_0$ be the admissible test domain and define $I_{\epsilon}$ for $\epsilon>0$ by
\begin{align}
    I_{\epsilon}&\coloneqq \sup{\bra{|I(g)|\,:\,g\in L^2(\Lambda) \text{ such that }\norm{v_g}_{C^1(\overline{G_0})}}\leq \epsilon}.
\end{align}
This defines the \textit{scattering test response}, i.e., the supremum over all the responses for a fixed test domain. The behavior of the scattering test response will help us recover the location and shape of the scatterer. Namely, we define the indicator functional for the no response test via
\begin{align}
    I_0(G_0)&\coloneqq \lim_{\epsilon\to 0}I_{\epsilon}(G_0).
\end{align}
The no response algorithm, like the probe and singular source method, makes use of the template test domain $G_0$ that is rotated and translated around the computational domain. Let $G$ refer to all possible test domains achieved after rotation and/or translation of $G_0$. Then the next theorem shows that we can obtain some upper estimate for the set of singular points $u^s$ by taking the intersections of the sets $G$ for all possible test domains with $I_0(G)=0$.
\begin{theorem}
    Let $G_0$ be a fixed admissible test domain. We have $I_0(G_0)=0$ if the scattered field $u^s$ can be analytically extended into $\mathbb{R}^2\setminus\overline{D}$. If $u^s$ cannot be analytically extended into $\mathbb{R}^2\setminus\overline{D}$, then $I_{\epsilon}(G_0)=\infty$ for all $\epsilon>0$. Hence $I_0(G_0)=\infty$.
\end{theorem}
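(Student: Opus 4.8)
The plan is to prove the two implications separately, relating the size of the scattering test response $I_\epsilon(G_0)$ to whether the scattered field $u^s$ admits an analytic (equivalently, radiating-solution) extension across $\partial D$ into the test domain's complement. The key analytic tool is the density/denseness of Herglotz wavefunctions: for a fixed admissible test domain $G_0$ with $\overline{D}\cap G_0=\emptyset$ in the relevant configuration, the set of Herglotz wavefunctions $v_g$ with $g\in L^2(\Lambda)$ is dense (in $C^1$ on compact subsets) in the space of solutions of the Helmholtz equation on a neighborhood of $G_0$; this is the limited-aperture analogue of the density result underlying Theorem~3 and the Runge approximation used in the probe method. I will use the representation from (8.1), namely $I(g)=\frac{1}{\gamma}\int_{\partial D}\big(u^s\frac{\partial v_g}{\partial\nu}-\frac{\partial u^s}{\partial\nu}v_g\big)\,ds$, which exhibits $I(g)$ as a duality pairing of the Cauchy data of $u^s$ on $\partial D$ against the Cauchy data of $v_g$.

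First, suppose $u^s$ extends analytically into $\mathbb{R}^2\setminus\overline{D}$, hence in particular into a neighborhood of $\overline{G_0}$ (here we are in the case $\overline{G_0}\subset\mathbb{R}^2\setminus\overline{D}$; when $\overline{D}\subset G_0$ the estimate is immediate from the constraint $\norm{v_g}_{C^1(\overline{G_0})}\le\epsilon$). Move the contour: since both $u^s$ and $v_g$ solve the Helmholtz equation in the region between $\partial D$ and $\partial G_0$, Green's second identity lets me replace $\int_{\partial D}$ by $\int_{\partial G_0}$ (the boundary terms at infinity vanish by the radiation condition on $u^s$ and since $v_g$ is entire there is no contribution from a singularity). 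Thus $|I(g)| \le C\,\norm{u^s}_{C^1(\overline{G_0})}\,\norm{v_g}_{C^1(\overline{G_0})}\le C'\epsilon$, where $C'$ depends on $u^s$ and $G_0$ but not on $g$. Taking the supremum over admissible $g$ gives $I_\epsilon(G_0)\le C'\epsilon$, and letting $\epsilon\to 0$ yields $I_0(G_0)=0$.

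Conversely, suppose $u^s$ does not extend analytically into all of $\mathbb{R}^2\setminus\overline{D}$. I argue by contradiction: assume $I_{\epsilon_0}(G_0)<\infty$ for some $\epsilon_0>0$, i.e. $|I(g)|\le M$ for all $g$ with $\norm{v_g}_{C^1(\overline{G_0})}\le\epsilon_0$, hence by homogeneity $|I(g)|\le (M/\epsilon_0)\norm{v_g}_{C^1(\overline{G_0})}$ for all $g\in L^2(\Lambda)$. This says the linear functional $g\mapsto I(g)$ is bounded with respect to the weaker $C^1(\overline{G_0})$-seminorm of $v_g$. Using the duality representation of $I(g)$ together with the density of $\{v_g\}$ among Helmholtz solutions near $G_0$, this bound extends the functional $v\mapsto \frac{1}{\gamma}\int_{\partial D}(u^s\partial_\nu v - \partial_\nu u^s\, v)\,ds$ to a bounded functional on solutions of the Helmholtz equation on a neighborhood of $\overline{G_0}$, and by the Riesz–Hahn–Banach mechanism this forces $u^s$ to agree (via Green's representation) with a radiating solution of the Helmholtz equation in $\mathbb{R}^2\setminus\overline{G_0}$; by Rellich's lemma and unique continuation (cf. the argument proving (1.2)) this radiating solution extends $u^s$ analytically across the part of $\partial D$ outside $G_0$, contradicting non-extendability. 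Therefore no such $\epsilon_0$ exists, so $I_\epsilon(G_0)=\infty$ for every $\epsilon>0$, whence $I_0(G_0)=\infty$.

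The main obstacle is the converse direction: making rigorous the passage from "the functional is bounded in the $C^1(\overline{G_0})$-seminorm of the Herglotz approximant" to "$u^s$ genuinely extends as a radiating solution." This requires (i) the limited-aperture denseness of Herglotz wavefunctions in the space of local Helmholtz solutions, which holds because $\Lambda$ has nonempty interior and far-field patterns of point sources with sources in the extension region are $L^2(\Lambda)$-approximated by $\{Fg\}$-type data; and (ii) a careful duality argument identifying the extending function as the potential with density given by the Cauchy data of $u^s$ on $\partial D$, followed by Rellich/unique-continuation to upgrade this to an honest analytic continuation. The forward direction is essentially just Green's identity plus a contour shift and is routine.
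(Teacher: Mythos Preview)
The paper does not actually prove this theorem; it only states ``A proof is given in \cite{potthast2007convergence}.'' So there is no in-paper argument to compare your proposal against line by line.

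That said, your overall two-step strategy---contour shift via Green's identity for the forward direction, and a boundedness-implies-extension argument for the converse---is the right one and matches Potthast's approach in the cited reference. However, your write-up is tangled by a typo in the theorem statement as printed: the hypothesis should read ``analytically extended into $\mathbb{R}^2\setminus\overline{G_0}$,'' not ``$\mathbb{R}^2\setminus\overline{D}$.'' The scattered field $u^s$ is already defined on $\mathbb{R}^2\setminus\overline{D}$, so the printed condition is vacuous, and your case split ``$\overline{G_0}\subset\mathbb{R}^2\setminus\overline{D}$'' versus ``$\overline{D}\subset G_0$'' is an artifact of trying to interpret it. With the corrected hypothesis the forward direction is clean: if $u^s$ extends as a radiating solution to $\mathbb{R}^2\setminus\overline{G_0}$, the far-field representation (5.4) holds with $\partial D$ replaced by $\partial G_0$, so (8.1) becomes $I(g)=\gamma^{-1}\int_{\partial G_0}(u^s\,\partial_\nu v_g-\partial_\nu u^s\,v_g)\,ds$, and $|I(g)|\le C\epsilon$ is immediate from $\norm{v_g}_{C^1(\overline{G_0})}\le\epsilon$. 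No contour-shifting ``between $\partial D$ and $\partial G_0$'' is needed, and indeed your version of that step is geometrically incoherent when $D$ and $G_0$ are disjoint.

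For the converse, your Hahn--Banach/Riesz sketch is plausible but not how the cited reference proceeds, and it leaves the hardest step (producing an actual radiating extension from a bounded functional) unproved. Potthast's argument is constructive: for each $z\in\mathbb{R}^2\setminus\overline{G_0}$ one uses the point-source approximation of Section~6.2 to build densities $g$ with $\norm{v_g}_{C^1(\overline{G_0})}\le\epsilon$ while $v_g\approx\Phi(\cdot,z)$ on a neighborhood of $\partial D$; substituting into (8.1) gives $I(g)\approx u^s(z)$ up to a constant. If $u^s$ does not extend to $\mathbb{R}^2\setminus\overline{G_0}$, driving $z$ toward a singular point forces $|I(g)|\to\infty$, so $I_\epsilon(G_0)=\infty$. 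This explicit construction is also what ties the no-response test to the singular sources and probe methods, which is the organizing theme of Sections~4--8.
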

A proof is given in \cite{potthast2007convergence}. The indicator functional chosen and approach are similar in spirit to the enclosure method. Both the enclosure method and the no response test are one-wave methods. Both only require one incident wave here. This is because to calculate $I_{\epsilon}$ from the far field pattern $u_{\infty}|_{\Lambda}$ for scattering of a plane wave $u^i$ in the direction $-\Hat{x}$, we use the reciprocity relation 
\begin{align}
    u_{\infty}(\Hat{x},-d)&=u_{\infty}(d,-\Hat{x}),\quad \forall \Hat{x},d\in\Lambda\subset\Omega.
\end{align}
Then we obtain 
\begin{align}
    v_{\infty}(\Hat{x})&=\int_{\Lambda}u_{\infty}(\Hat{x},-d)\,g(-d)\,ds(d)\\
    &=\int_{\Lambda}u_{\infty}(d,-\Hat{x})\,g(-d)\,ds(d),
\end{align}
where $v^{\infty}$ is the far field pattern associated to the scattered field generated by $v_g|_{\Lambda}$, i.e., $v_{\infty}=Fg$, where $F: L^2(\Lambda)\to L^2(\Lambda)$ is the far field operator. Thus, from the knowledge of the far field pattern $u_{\infty}(d,-\Hat{x})$, $d\in\Lambda$, for one wave of incidence $-\Hat{x}$, we cam reconstruct $I_{\epsilon}(G)$ for any test domain $G$ by construction of the kernels $g$ of the limited aperture Herglotz wavefunctions. Unlike the enclosure method, the scatterer need not be a convex polygon. The no response test can gather special information about the scatterer. However, we cannot hope to reconstruct the full shape of $D$ by this one-wave method. Instead, a subset of the closure of $D$ built from an approximate set of singular points of the scattered field $u^s$ will be reconstructed. However, there is a multi-wave version of the no-response test that allows for full reconstruction, which is outlined in \cite{liu2011some}.
\begin{remark}[The No Response Test]
    The no-response test by Luke and Potthast reconstructs a subset of scatterer $\overline{D}$ built from an approximate set of singular points of the scattered field, given one incident wave and the far field data restricted to limited aperture $\Lambda\subset \Omega$.
    \begin{enumerate}
        \item Choose a set of test domains $G_j$ for $j$ in some index $\mathcal{J}$ and a sufficiently small parameter $\epsilon$.
        \item For each test domain $G_j$, construct the functions $v_g^{j,l}$ with $g_l\in L^2(\Lambda)$ for $l\in \mathcal{L}$, another index set such that $\norm{v_g^{j,l}}_{L^2(\Lambda)}\leq \epsilon$.
        \item For each $l\in\mathcal{L}$, calculate the indicator functional $I^j(g_l)$ given by (8.1) and take 
        \begin{align*}
            I_{\epsilon}^j&=\sup_{l\in\mathcal{L}}I^j(g_l).
        \end{align*}
        \item Choose a cut-off constant $c_0$. If $\abs{I_{\epsilon}^j(G_j)}\leq c_0$, we call $G_j$ a positive test domain. Then, just as in the enclosure method, we compute the intersection of all the positive test domains, i.e.,
        \begin{align*}
            D_{res}&\coloneqq \bigcap_{G_j \text{ positive } j\in\mathcal{J}}G_j.
        \end{align*}
        This set $D_{res}$ will be an approximation to a subset of $\overline{D}$.
    \end{enumerate}
\end{remark}
\section{Direct Sampling Methods}
We now discuss another point sampling method known as the \textit{orthogonality sampling method}, introduced by Potthast \cite{potthast2010study} and a closely related \textit{direct sampling method} by Liu \cite{liu2016fast}. The key feature of orthogonality/direct sampling is that the computation of the indicator functional involves only the inner products of the measurements, with suitably chosen functions \cite{alzaalig2017direct}. This makes the direct sampling methods especially robust to noise and computationally faster compared to most of the classical methods we outlined. These direct sampling methods additionally work well with limited aperture data. The main idea is to develop an imaging functional using the measured data that are positive in the region you want to recover and are approximately zero outside the region.\\
For simplicity, let $D\subset \mathbb{R}^m$ (for $d=2,3$) be the sound soft scattering obstacle (possibly with multiple components). We assume that the boundary $\partial D$ is a class $\mathcal{C}^2$-smooth closed curve/surface, where exterior $\mathbb{R}^2\setminus\overline{D}$ is connected. The scattered field $u^s(\cdot,z)$ is induced by a point incident field $u^i(\cdot,z)=\Phi(\cdot,z)$, where is the location of the point source. $\Phi(\cdot,z)$ is the fundamental solution of the Helmholtz equation. Therefore, the radiating time-harmonic scattered field $u^s(x,y)\in H_{loc}^1(\mathbb{R^d}\setminus\overline{D}$ given by the point source incident field is the unique solution to the following:
\begin{align*}
    \Delta u^s+k^2u^s&=0\quad \text{in }\mathbb{R}^m\setminus\overline{D}\\
    u^s(\cdot,z)|_{\partial D}&=-\Phi(\cdot,z)\\
    \partial_ru^s-iku^s&=\mathcal{O}\paren{\frac{1}{r^{(m+1)/2}}}\text{ as }r=|x|\to\infty.
\end{align*}
This is the usual sound-soft time-harmonic acoustic scattering problem. Assume $k^2$ is not a Dirichlet eigenvalue for the negative Laplacian in D. We can then assume that we have the measured scattering data $u^s(x,z)$ obtained from solving the direct time-harmonic scattering problem. Recall that $u^s$ satisfies an asymptotic relation from which we obtain the far field pattern $u_{\infty}$. The direct sampling methods devise an indicator functional that will yield a solution to \textit{problem 1} given for the LSM.  The direct sampling methods are based on a suitable factorization for the far-field operator 
\begin{align*}
    &F: L^2(\Omega)\to L^2(\Omega)\\
    Fg(\Hat{x})&=\int_{\Omega}u_{\infty}(\Hat{x},\Hat{y})g(\Hat{y})\,ds(\Hat{y}),\quad \text{where }\Omega=\text{unit sphere/circle}.
\end{align*}
Since the far field pattern is analytic, $F$ is a compact operator. Furthermore, by the assumption on $k^2$ $F$ is additionally injective with dense range. Now the factorization we consider for $F$ is 
\begin{align}
    F&=H^{*}TH.
\end{align}
$H: L^2(\Omega)\to H^{\frac{1}{2}}(\partial D)$ is the usual Herglotz wave operator that maps the kernel $g\in L^2(\Omega)$ to a continuous superposition of plane waves, i.e.,
\begin{align}
    Hg&=v_g|_{\partial D}=\int_{\Omega}e^{ikx\cdot \Hat{y}}g(\Hat{y})\,ds(\Hat{y}),\quad x\in \partial D.
\end{align}
$T=-S^{-1}_{\partial D\to \partial D}$ is the bounded inverse of the single-layer potential operator $S_{\partial D\to \partial D}: H^{-\frac{1}{2}}(\partial D)\to H^{\frac{1}{2}}(\partial D)$ given by
\begin{align}
    S_{\partial D\to \partial D}(\psi(z))&=\int_{\partial D}\Phi(x,z)\psi(z)\,ds(z).
\end{align}
With this factorization the two indicators we study are given by
\begin{align}
    W_1(z)&=\Big(F\varphi_z,\varphi_z\Big)_{L^2(\Omega)}\text{ and }
    W_2(z)=\norm{F\varphi_z}_{L^2(\Omega)},\quad \text{where }\varphi_z=e^{-ikz\cdot \Hat{x}},
\end{align}
where the first indicator functional was introduced by Liu \cite{liu2016fast} and the second indicator functional was introduced by Potthast \cite{potthast2010study}. First, observe that
\begin{align*}
    W_{1}(z)&=\Big(F\varphi_z,\varphi_z\Big)_{L^2(\Omega)}\\
    &= \sup_{\norm{\psi}=1}\abs{\Big(F\varphi_z,\psi\Big)_{L^2(\Omega)}}\\
    &\geq \frac{\abs{\Big(F\varphi_z,\psi\Big)_{L^2(\Omega)}}}{\norm{\psi}}\\
    &\geq C\cdot \abs{\Big(F\varphi_z,\varphi_z\Big)_{L^2(\Omega)}},
\end{align*}
where $C$ is a constant. Additionally, 
\begin{align*}
    \abs{\Big(F\varphi_z,\varphi_z\Big)_{L^2(\Omega)}}&\geq \operatorname{Im}\Big(F\varphi_z,\varphi_z\Big)_{L^2(\Omega)}\\
    &\geq \frac{1}{8\pi}\paren{\frac{k}{2\pi}}^{m-2}\Big(F\varphi_z,F\varphi_z\Big)_{L^2(\Omega)}\\
    &=\frac{1}{8\pi}\paren{\frac{k}{2\pi}}^{m-2}\norm{F\varphi_z}_{L^2(\Omega)}^2,
\end{align*}
a consequence of the following identity for the far field operator
\begin{align}
    F-F^{*}-\frac{1}{8\pi}\paren{\frac{k}{2\pi}}^{m-2}F^{*}F,
\end{align}
where $F^{*}$ is the $L^2$-adjoint of $F$. So we found a constant $C=C(m)>0$ satisfying $W_2\leq CW_1$ for every $z$.
Additionally, $\abs{\Big(F\varphi_z,\varphi_z\Big)_{L^2(\Omega)}}\leq C_0\norm{F\varphi_z}$, where $C_0>0$ is another constant. Hence, both indicator functionals are very similarly related, with 
\begin{align*}
    W_1\lesssim W_2 \text{ and }W_2\lesssim W_1,
\end{align*}
hence, $W_1\simeq W_2$.
We will first justify the efficacy of these functionals. To do this, consider the \textit{Funk-Hecke integral identities} given as follows:
\begin{align}
    \int_{\Omega}e^{-ik(z-x)\cdot \Hat{y}}\,ds(\Hat{y})&=\begin{cases}
        2\pi J_0(k|x-z|)&\text{ if }m=2,\\
        4\pi j_0(k|x-z|)&\text{ if }m=3.
    \end{cases}
\end{align}
Here, $J_0$ is the zeroth Bessel function of the first kind; $j_0$ is zeroth order spherical Bessel function of the first kind. We will make use of the decay of the Bessel functions, i.e.,
\begin{align}
    J_0(t)&=\frac{\cos{t}+\sin{t}}{\sqrt{\pi t}}\bra{1+\mathcal{O}\paren{\frac{1}{t}}}&\text{and }j_0(t)=\frac{\sin{t}}{t}\bra{1+\mathcal{O}\paren{\frac{1}{t}}},
\end{align}
as $t\to \infty$. We then obtain
\begin{lemma}
    \begin{align}
        (H\varphi_z)(x)&=v_{\varphi_z}=\mathcal{O}\paren{\frac{1}{|x-z|^{m-\frac{1}{2}}}}.
    \end{align}
\end{lemma}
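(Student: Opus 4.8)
The plan is to evaluate $H\varphi_z$ in closed form using the Funk--Hecke identity (9.6) and then extract the decay from the Bessel asymptotics (9.7). First I would substitute the definition (9.2) of the Herglotz wave operator together with $\varphi_z(\hat y)=e^{-ikz\cdot\hat y}$, which gives
\[
(H\varphi_z)(x)=\int_{\Omega}e^{ikx\cdot\hat y}e^{-ikz\cdot\hat y}\,ds(\hat y)=\int_{\Omega}e^{ik(x-z)\cdot\hat y}\,ds(\hat y)=\int_{\Omega}e^{-ik(z-x)\cdot\hat y}\,ds(\hat y).
\]
This is precisely the integral on the left-hand side of (9.6), so $(H\varphi_z)(x)$ depends on the pair $(x,z)$ only through $r:=|x-z|$, and the lemma is reduced to a one-variable estimate on a Bessel function.

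Next I would apply (9.6) directly, obtaining $(H\varphi_z)(x)=2\pi J_0(kr)$ when $m=2$ and $(H\varphi_z)(x)=4\pi j_0(kr)$ when $m=3$. Then I would invoke the large-argument asymptotics (9.7): from $J_0(t)=\mathcal{O}(t^{-1/2})$ and $j_0(t)=\mathcal{O}(t^{-1})$ as $t\to\infty$ it follows that $(H\varphi_z)(x)$ decays like $r^{-1/2}$ in dimension two and like $r^{-1}$ in dimension three as $r\to\infty$, which is the asserted rate of decay in $|x-z|$ (uniformly in the direction $(x-z)/|x-z|$).

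The only point requiring a word of care --- rather than a genuine difficulty --- is that (9.7) is a statement as $t\to\infty$, so it cannot simply be substituted for small or moderate values of $k|x-z|$. To handle this I would note that $t\mapsto J_0(t)$ and $t\mapsto j_0(t)$ are entire functions, hence bounded on every compact interval $[0,t_0]$; consequently on any region where $k|x-z|$ stays bounded the $\mathcal{O}$-bound holds trivially, and the implied constant may be chosen to depend only on $k$ and $m$. Combining this with the asymptotic regime above yields the lemma; in essence the whole argument is two substitutions followed by the standard decay of $J_0$ and $j_0$.
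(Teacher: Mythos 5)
Your approach is exactly the paper's: the paper's entire justification is ``this follows immediately from the Funk--Hecke identities,'' and you have simply written that one-liner out, correctly reducing $(H\varphi_z)(x)$ to $2\pi J_0(k|x-z|)$ (resp.\ $4\pi j_0(k|x-z|)$) and invoking the large-argument Bessel asymptotics, with a sensible remark about bounded arguments. The one thing you should not have glossed over is the exponent: your computation yields decay of order $|x-z|^{-1/2}$ for $m=2$ and $|x-z|^{-1}$ for $m=3$, i.e.\ $\mathcal{O}\bigl(|x-z|^{-(m-1)/2}\bigr)$, whereas the lemma as printed asserts $\mathcal{O}\bigl(|x-z|^{-(m-\frac12)}\bigr)$, which would be $|x-z|^{-3/2}$ and $|x-z|^{-5/2}$. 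These are not the same, and the printed rate is in fact false for the explicit closed form ($J_0(t)$ is genuinely of size $t^{-1/2}$ along its oscillation peaks, so it is not $\mathcal{O}(t^{-3/2})$). The exponent in the statement is evidently a typo for $\tfrac{m-1}{2}$ --- this is confirmed by Theorem 27, where squaring the pointwise bound is supposed to produce $\mathrm{dist}(z,D)^{1-m}$, which matches $\bigl(d^{-(m-1)/2}\bigr)^2$ and not $\bigl(d^{-(m-1/2)}\bigr)^2$. So your proof is the correct proof of the correct statement; you should flag the discrepancy rather than claim your derived rate ``is the asserted rate of decay.''
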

This follows immediately from the Funk-Hecke identities.
\begin{theorem}[Image Functional Characterization]
    Let $W_1$ be the image functional as defined in (9.4). Then, for any sampling point $z\in \mathbb{R}^m\setminus\overline{D}$, the following holds:
    \begin{align*}
        W_1(z)&=\mathcal{O}\paren{\text{dist}(z,D)^{1-m}}\text{ as }\text{dist}(z,D)\to\infty.
    \end{align*}
\end{theorem}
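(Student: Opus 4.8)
The plan is to combine the symmetric factorization $F = H^{*}TH$ from (9.1) with the pointwise decay of the Herglotz trace $H\varphi_z$ established in the preceding lemma. Writing $H^{*}$ as the adjoint of $H\colon L^2(\Omega)\to H^{1/2}(\partial D)$ with respect to the duality pairing $\langle\cdot,\cdot\rangle$ between $H^{-1/2}(\partial D)$ and $H^{1/2}(\partial D)$, we get
\begin{align*}
    W_1(z) = \paren{F\varphi_z,\varphi_z}_{L^2(\Omega)} = \paren{H^{*}TH\varphi_z,\varphi_z}_{L^2(\Omega)} = \langle TH\varphi_z,\,H\varphi_z\rangle .
\end{align*}
Because $k^2$ is not a Dirichlet eigenvalue of $-\Delta$ in $D$, the single-layer operator $S_{\partial D\to\partial D}$ is invertible, so $T=-S_{\partial D\to\partial D}^{-1}\colon H^{1/2}(\partial D)\to H^{-1/2}(\partial D)$ is bounded, and Cauchy--Schwarz for the duality pairing gives
\begin{align*}
    \abs{W_1(z)} \leq \norm{T}\,\norm{H\varphi_z}_{H^{1/2}(\partial D)}^{2}.
\end{align*}
Thus the whole statement reduces to showing $\norm{H\varphi_z}_{H^{1/2}(\partial D)} = \mathcal{O}\paren{\text{dist}(z,D)^{(1-m)/2}}$ as $\text{dist}(z,D)\to\infty$.

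For that reduction I would not use the $L^2$-decay alone but the fact that $H\varphi_z$ is the boundary trace of an \emph{explicit} radial function: by the Funk--Hecke identity recorded above, $v_{\varphi_z}(x)=(H\varphi_z)(x)=2\pi J_0(k\abs{x-z})$ for $m=2$ and $4\pi j_0(k\abs{x-z})$ for $m=3$. Differentiating this closed form in $x$ produces only bounded factors (powers of the unit vector $(x-z)/\abs{x-z}$ and negative powers of $\abs{x-z}$) multiplied by derivatives of $J_0$ or $j_0$ evaluated at $k\abs{x-z}$; by the stated asymptotics of the Bessel functions --- together with Bessel's equation for the higher-order derivatives --- every such derivative up to order one decays like $\abs{x-z}^{-(m-1)/2}$. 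Since $\partial D$ is a fixed compact $C^2$ surface, for $z$ far from $D$ we have $\abs{x-z}\asymp\text{dist}(z,D)$ uniformly in $x\in\partial D$, whence $\norm{v_{\varphi_z}}_{C^1(\partial D)} = \mathcal{O}\paren{\text{dist}(z,D)^{(1-m)/2}}$, and the interpolation inequality $\norm{f}_{H^{1/2}(\partial D)}\lesssim\norm{f}_{C^1(\partial D)}$ on the compact manifold $\partial D$ delivers the desired $H^{1/2}$ bound with a constant depending only on $\partial D$ and $k$. (Equivalently, one may quote the preceding lemma directly together with an interior elliptic estimate for $(\Delta+k^{2})v_{\varphi_z}=0$ on a fixed tube around $\partial D$ followed by the trace theorem $H^{1}\to H^{1/2}$; that route already yields a stronger exponent, which is a fortiori $\mathcal{O}(\text{dist}(z,D)^{1-m})$.)

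Inserting the decay of $\norm{H\varphi_z}_{H^{1/2}(\partial D)}$ back into the Cauchy--Schwarz bound gives $\abs{W_1(z)}=\mathcal{O}\paren{\text{dist}(z,D)^{1-m}}$, which is the claim. The only step that needs genuine care is the upgrade from the pointwise (or $L^2(\partial D)$) decay of $H\varphi_z$ to decay in the $H^{1/2}(\partial D)$ norm with a constant that does not deteriorate as $z\to\infty$: this is exactly what is secured either by working with the explicit Bessel representation of $v_{\varphi_z}$, so that all derivatives are simultaneously controlled, or, equivalently, by an interior regularity estimate for the Helmholtz equation on a neighbourhood of $\partial D$ that stays at a fixed distance from the moving source point. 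Everything else --- the identity $F=H^{*}TH$, boundedness of $T$, and the Funk--Hecke formula --- has already been recorded.
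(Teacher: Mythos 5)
Your proposal is correct and follows essentially the same route as the paper: both use the factorization $F=H^{*}TH$ and the boundedness of $T$ to reduce the bound on $W_1(z)$ to a squared Sobolev norm of the Herglotz trace $H\varphi_z$, and then invoke the Funk--Hecke identity and Bessel-function asymptotics to get the decay rate $\text{dist}(z,D)^{1-m}$. If anything, your write-up is the more careful one, since you explicitly control the first derivatives of $v_{\varphi_z}$ (needed for the $H^{1/2}(\partial D)$ or $H^{1}(D)$ norm), a point the paper's proof passes over when it cites the pointwise decay lemma alone.
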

\begin{proof}
    Let $v_g$ denote the Herglotz wave function for any $x\in\mathbb{R}^m$. $v_g\in H_{loc}^1(\mathbb{R}^m)$ for any given $g\in L^2(\Omega)$, which implies $v_g|_{\partial D}=Hg\in H^{\frac{1}{2}}(\partial D)$. We see that, for any $x\in \mathbb{R}^m$,
    \begin{align*}
         \abs{W_1(z)}&=\abs{\Big(F\varphi_z,\varphi_z\Big)_{L^2(\Omega)}}\\
         &=\abs{\Big(TH\varphi_z,H\varphi_z\Big)_{L^2(\Omega)}} \qquad(\text{by (9.1)})\\
         &\leq C\norm{H\varphi_z}_{L^2(\partial D)}^2\qquad(\text{since $T$ is bounded})\\
         &=C\norm{v_{\varphi_z}}_{H^{\frac{1}{2}}(\partial D)}^2 \qquad(\text{by definition})\\
         &\leq C\norm{v_{\varphi_z}}_{H^1(D)}^2 \qquad(\text{by the trace theorem}).
    \end{align*}
    Now by lemma 26 we have that
    \begin{align*}
        \norm{v_{\varphi_z}}_{H^1(D)}^2&=\mathcal{O}\paren{\text{dist}(z,D)^{1-m}}\text{ as }\text{dist}(z,D)\to\infty,
    \end{align*}
    where we applied the decay of the Bessel functions.
\end{proof}
What this theorem tells us is that 
\begin{align*}
    W_1(z)\to 0&\text{ as }z\text{ moves far away from the scatterer $D$}.
\end{align*}
Similarly,
\begin{align*}
\norm{F\varphi_z}_{L^2(\Omega)}^2&=\norm{H^{*}TH\varphi_z}_{L^2(\Omega)}^2\\
    &\leq C \norm{H\varphi_z}_{L^2(\partial D)}^2\\
    &= C\norm{v_{\varphi_z}}_{L^2(\partial D)}^2\\
    &\leq C \norm{v_{\varphi_z}}_{H^1(D)}^2\\
    &=\mathcal{O}\paren{\text{dist}(z,D)^{1-m}},\text{ as dist}(z,D)\to\infty.
\end{align*}
\begin{theorem}
    Let $W_2$ be the image functional defined in (9.4). Then, for any sampling point $z\in\mathbb{R}^m\setminus\overline{D}$, the following holds:
    \begin{align*}
        W_2(z)&=\mathcal{O}\paren{\text{dist}(z,D)^{1-m}}\text{ as dist}(z,D)\to\infty.
    \end{align*}
\end{theorem}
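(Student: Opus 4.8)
The plan is to obtain this as an immediate consequence of Theorem 27, using the equivalence $W_1 \simeq W_2$ already recorded above. Recall that in the discussion preceding Lemma 26 it was noted that $\norm{\varphi_z}_{L^2(\Omega)}$ is a fixed constant (indeed $\abs{\varphi_z}\equiv 1$ on $\Omega$), that Cauchy--Schwarz gives $\abs{\paren{F\varphi_z,\varphi_z}_{L^2(\Omega)}} \le \norm{\varphi_z}_{L^2(\Omega)}\,\norm{F\varphi_z}_{L^2(\Omega)}$, and that the far-field relation between $F-F^{*}$ and $F^{*}F$ recorded above forces $\operatorname{Im}\paren{F\varphi_z,\varphi_z}_{L^2(\Omega)} \gtrsim \norm{F\varphi_z}_{L^2(\Omega)}^2$; chaining these produces a constant $C = C(m,k) > 0$, independent of $z$, with $W_2(z) \le C\,W_1(z)$ for every $z \in \mathbb{R}^m \setminus \overline{D}$. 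Theorem 27 then gives $W_2(z) \le C\,W_1(z) = \mathcal{O}\paren{\text{dist}(z,D)^{1-m}}$ as $\text{dist}(z,D) \to \infty$, which is exactly the assertion.

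As an alternative I would run the argument of Theorem 27 directly on $W_2$, which is the computation already sketched just before the statement. From the factorization $F = H^{*}TH$ and the boundedness of $H^{*}\colon H^{-1/2}(\partial D) \to L^2(\Omega)$ and of $T = -S_{\partial D \to \partial D}^{-1}\colon H^{1/2}(\partial D) \to H^{-1/2}(\partial D)$ one gets
\begin{align*}
\norm{F\varphi_z}_{L^2(\Omega)}^2 &= \norm{H^{*}TH\varphi_z}_{L^2(\Omega)}^2 \le C\,\norm{H\varphi_z}_{H^{1/2}(\partial D)}^2 \\
&= C\,\norm{v_{\varphi_z}}_{H^{1/2}(\partial D)}^2 \le C\,\norm{v_{\varphi_z}}_{H^1(D)}^2,
\end{align*}
the last step by the trace theorem. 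Then Lemma 26 supplies the decay of the Herglotz wave function $v_{\varphi_z}$ coming from the Funk--Hecke identities and the Bessel asymptotics, and since $D$ is bounded this integrates to $\norm{v_{\varphi_z}}_{H^1(D)}^2 = \mathcal{O}\paren{\text{dist}(z,D)^{1-m}}$, exactly as in the proof of Theorem 27.

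There is no real obstacle here: the statement is a corollary, and both routes take only a few lines. The two points that deserve a sentence of justification are (i) that the gradient contribution to the $H^1(D)$ norm decays at least as fast as the function itself -- differentiating the Funk--Hecke representation of $v_{\varphi_z}$ in $x$ only inserts a bounded vector factor under the integral sign and leaves the Bessel decay rate unchanged -- and (ii) that the implied constant in the $\mathcal{O}$ is uniform over $x \in D$, which holds because $D$ has finite measure and $\abs{x-z} \ge \text{dist}(z,D)$ for every $x \in D$, so the pointwise bound of Lemma 26 integrates to the asserted bound on all of $D$. In short, the content is precisely the equivalence $W_1 \simeq W_2$ combined with the already-established Theorem 27.
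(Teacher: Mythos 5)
Your second argument is exactly the paper's proof: the displayed chain immediately preceding the theorem estimates $\norm{F\varphi_z}_{L^2(\Omega)}^2=\norm{H^{*}TH\varphi_z}_{L^2(\Omega)}^2\leq C\norm{v_{\varphi_z}}_{H^1(D)}^2$ via the factorization $F=H^{*}TH$, the boundedness of $T$ and $H^{*}$, the trace theorem, and then Lemma 26 with the Bessel asymptotics; your two supplementary remarks (the gradient contribution to the $H^1(D)$ norm and uniformity of the constant over the bounded set $D$) are precisely what Lemma 26 is meant to supply. One caveat you inherit from the source: this chain bounds $\norm{F\varphi_z}^2$, while $W_2(z)=\norm{F\varphi_z}$, so strictly it yields $W_2(z)=\mathcal{O}\paren{\text{dist}(z,D)^{(1-m)/2}}$ unless the statement is read as a bound on $W_2^2$; that exponent discrepancy is in the paper, not something you introduced, but it is worth flagging rather than reproducing.

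Your first route, however, does not go through as written. The far-field identity gives $\norm{F\varphi_z}^2\lesssim \operatorname{Im}\paren{F\varphi_z,\varphi_z}\leq W_1(z)$, i.e.\ $W_2(z)^2\lesssim W_1(z)$, while Cauchy--Schwarz gives the opposite comparison $W_1(z)\lesssim W_2(z)$. Chaining these does not produce $W_2(z)\leq C\,W_1(z)$ with $C$ independent of $z$: from $W_2^2\lesssim W_1$ you can only conclude $W_2\lesssim W_1^{1/2}$, and in the regime $\text{dist}(z,D)\to\infty$, where $W_1\to 0$, this yields only the weaker rate $\mathcal{O}\paren{\text{dist}(z,D)^{(1-m)/2}}$; the inequality $W_2\leq CW_1$ you would need fails exactly where it matters. (The paper does assert ``$W_2\leq CW_1$'' in the discussion after the identity, but its own displayed inequalities establish only $W_2^2\lesssim W_1\lesssim W_2$, which is presumably why Theorem 28 is proved there by the direct factorization computation rather than by reduction to Theorem 27.) So keep your second argument and drop the first, or else replace the claimed linear comparison by the correct $W_2\lesssim W_1^{1/2}$ and accept the halved decay rate.
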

The indicator functional $W_1$ was proposed by Liu in \cite{liu2016fast}. We can state it more explicitly as
\begin{align}
    W_1(z)&\coloneqq\abs{\int_{\Omega}e^{-ikz\cdot \Hat{y}}\int_{\Omega}u_{\infty}(\Hat{x},\Hat{y})e^{-ikz\cdot\Hat{x}}\,ds(\Hat{x})ds(\Hat{y})},\quad z\in \mathbb{R}^m.
\end{align}
For the sampling points inside the scatterer $D$, we can always find a lower bound for the indicator $W_1$, and for sampling points away from the scatterer, $W_1$ starts to decay. Moreover, the sulting direct sampling method (DSM) is stable with respect to the noise in the data.
\subsection{Orthogonality Sampling (OSM)}
The indicator functional for orthogonality sampling (OSM) is given by $W_2$, defined as the \textit{reduced scattered field} based on a superposition of Bessel functions. $W_2$ can be rewritten as the inner product between the measurements of the far field pattern and a properly defined test function, namely the far field pattern arising from a point source. That is, for a fixed number $k$ the orthogonality sampling indicator functional is given by
\begin{align}
    W_2(z)&\coloneqq\norm{F\varphi_z}_{L^2(\Omega)}=\abs{\int_{\Omega}e^{-ikz\cdot\Hat{x}}u_{\infty}(\Hat{x})\,ds(\Hat{x})}
\end{align}
on a grid $\mathcal{G}$ of points $z\in \mathbb{R}^m$ ($m=2,3$) from the knowledge of the far field pattern $u_{\infty}$ on $\Omega$. This functional $W_2$ tests the orthogonality of
\begin{align}
    \langle e^{-ikz\cdot\Hat{x}},u_{\infty}\rangle_{L^2(\Omega)},\quad z\in \mathbb{R}^m.
\end{align}
The inner product given by (9.11) determines the orthogonality relation between the measured far field pattern $u_{\infty}$ with a test function which corresponds to the Green's function computed in the far field region. The OSM indicator functional $W_2$ is expected to exhibit large values for sampling points belonging to the targets' support and limited decaying value outside it, as made precise in Theorem 28. As $W_2$ is simply computed as the modulus of the scalar product between the measurements of the far field pattern and a test function without need for regularization, OSM has a significant robustness to noise. The lack of significant robustness to noise is a weakness of the LSM and factorization, so this is one area where OSM (and DSM by Liu) is advantageous. What is especially nice about OSM is that it can be treated as a one-wave method. In other words, OSM is more flexible than LSM and factorization in that OSM does not require data to be acquired under a multi-view multi-static configuration. The indicator functional is still effective even for one incident wave. The main drawback is that only partial shape reconstruction is possible with one incident wave. 
\section{Classifying the Sampling Methods}
We have seen several different sampling concepts emerge with each method. We can classify these sampling methods as follows:
\begin{enumerate}
    \item \textbf{Point Sampling}. These sampling methods are designed to choose a point $z\in \mathbb{R}^m$ (for $m=2,3$)  and construct an indicator function(al) that decides whether a given point $z$ lies inside or outside the scatterer. The benefit of point sampling schemes is that we can construct scatterers which consist of an unknown number of separate components. Furthermore, it is possible to construct scatterers that are not simply-connected. The support of the scatterers is determined by testing the range of special integral operators for a sampling grid containing the domain of interest. The functionals chosen will 'blow up' either outside the scatterer or as the point approaches the boundary of the scatterer. \\
    Examples of point sampling schemes include the \textit{linear sampling method}, \textit{factorization method}, \textit{orthogonality sampling} and \textit{direct sampling methods}. Ideally, we would want to test the range of the far field operator $F$ that gives far field data of the scattered waves. However, as we have seen with the linear sampling method and factorization, we cannot do this directly. In the linear sampling method, it is the range of the data-to-solution operator $B$ that gives us information needed for shape reconstruction. $B$ is a factor of $F$. For the factorization method, we can test the range of the square root of $|F|=(F^{*}F)^{\frac{1}{4}}$, but only with additional structural assumptions on the factorization of $F$ that do not always hold. These methods require a lot of multi-static data to implement. \\
    In comparison to LSM and factorization, the direct/orthogonality sampling methods design an indicator big inside the scatterer and relatively small outside. These methods are simpler to implement since only the inner products of the measurements with some suitably chosen functions are involved in computation of the indicator functional. Direct sampling methods exhibit a greater robustness to noise compared to LSM and factorization. Orthogonality sampling is additionally more flexible, as its application is possible with only one or few incident waves.  \\
    Both OSM and DSM are point sampling methods; however, they are not based on testing the range of an operator based on far field measurements. Additionally, OSM and DSM are more flexible methods in the amount of data needed. However, with only one or few incident waves, shape reconstructions are limited in resolution and quality. \\
    \item \textbf{Probing via a needle or cone}.  The \textit{probe} and \textit{singular source methods} locate the singularity of some point source or singular solution at the tip of a needle or cone. The approximation of a singular solution to the time-harmonic wave equation is only possible on test domains $G$ where the solution is regular and where the singularity is the unbounded component of its complement $R^m\setminus\overline{G}$. Hence, the selection of a test domain $G$, chosen as a subset of the complement of the needle/cone, is crucial to such methods. We then use a needle or cone to probe the area under consideration. The indicator functional chosen determines when the tip of the needle successfully hits the boundary of the scatterer. Such functionals 'blow up' when the tip of the needle touches the boundary of the scatterer. Furthermore, the functional blows up inside the scatterer. The \textit{multi-wave enclosure method} is a special case of the probe method with a special oscillating decaying function $v$ given in (7.3). \\
    \item \textbf{Domain Sampling}. These sampling schemes test whether the desired obstacle lies inside a test domain $G$. Such test domains are called \textit{positive test domains}. By taking the intersection of all such positive test domains, partial information of the scatterer is obtained. Usually, the convex hull of the domain is constructed. These methods overlap with the probing methods in that a subset of the scatterer is built from an approximate set of singular points of the scattered field. The indicator functionals are in fact similar. However, one major benefit of these domain sampling methods is that they are \textit{one wave methods}, i.e., they require only one incident wave and are especially good with limited aperture data. These methods do not require multi-static data. Examples include the \textit{enclosure method} and \textit{no response test}. For the enclosure method, we can alternatively compute the minimal half plane that contains the convex hull of the polygonal scatterer $D$.
\end{enumerate}
\section{Summary}
We have given an overview of several direct imaging methods for inverse time-harmonic acoustic obstacle scattering. Each of these methods recovers basic information on the location, shape, and size of the obstacle. What is crucial to the numerical realization of each method is the choice and calculation of an indicator functional. The indicator functional determines the points that lie inside or outside the scatterer. We have also seen that with some of the methods, the choice of approximate domains or test domains is also key to the accuracy of numerical reconstruction. Additionally, each method involves heuristically selecting a cut-off constant $c_0$. If $c_0$ is chosen to be too large, the reconstruction will be too small to exist. If $c_0$ is chosen to be smaller, the reconstruction may become larger. \\
All the inversion methods involve the regularization of ill-posed integral equations. What makes the integral equations ill-posed is the fact that the operator of interest is compact. In other words, the inversion of compact integral operators like the far field operator or Herglotz wave operator is ill-posed. This is because compact operators cannot have a bounded inverse. Therefore, the choice of a regularization parameter $\alpha$ is also important to direct imaging methods.\\
The choice of indicator functional and the set on which it is defined is a basic criterion to distinguishing different qualitative inversion methods. Some of the indicator functionals are defined on the ambient space $R^m, m=2,3$. Other functionals are defined only on the set of test domains, (e.g., the enclosure method, no response test, and probe method). We focused on the cases where the scatterer is a sound-soft or sound-hard obstacle; however, there is much current work being done on testing these imaging functionals on other scatterers. \\
A special feature of these qualitative schemes is that they usually work if the boundary condition or physical properties of the scatterer are unknown. Furthermore, there is no need for a forward solver or initial guess with such methods. These are the defining features that distinguishes direct imaging methods from both iterative approaches and domain decomposition methods. 
\section{Acknowledgements}
This author is supported in part by NSF grant DMS-2208256.
\nocite{*}
\printbibliography
\end{document}